\def\l{\left}
\def\r{\right}
\def\mR{\mathbb{R}}
\def\mRd{\mathbb{R}^d}
\def\mRdp{\mathbb{R}^{d+1}}
\def\vp{\varphi}
\def\div{\text{div}}
\def\PV{\text{P.V.}}
\def\Gts{\widetilde{G}_s}
\def\Th{\mathcal{T}_h}
\def\Nhc{\mathcal{N}_h^c}
\def\interp{\mathcal I_h}
\def\Cl{\Pi_h}
\def\ve{\varepsilon}
\def\pO{\partial\Omega}
\def\Oc{\overline{\Omega}}
\def\x{\texttt{x}}
\newcommand{\w}{\omega}
\newcommand{\eps}{\varepsilon}
\def\dist{\textrm{dist}}
\def\oT{T}
\def\oTp{T'}
\def\dty{d\widetilde{y}}
\newtheorem{Theorem}{Theorem}[section]
\newtheorem{Lemma}[Theorem]{Lemma}
\newtheorem{Proposition}[Theorem]{Proposition}
\newtheorem{Corollary}[Theorem]{Corollary}
\newtheorem{Remark}[Theorem]{Remark}
\newtheorem{Definition}[Theorem]{Definition}
\algrenewcommand\algorithmicrequire{\textbf{Input:}}
\algrenewcommand\algorithmicensure{\textbf{Output:}}
\numberwithin{equation}{section}
\begin{document}

\title[FE for nonlocal minimal graphs]{Finite element discretizations of nonlocal minimal graphs: convergence}

\author[J.P.~Borthagaray]{Juan Pablo~Borthagaray}
\address[J.P.~Borthagaray]{Departamento de Matem\'{a}tica y Estad\'{\i}stica del Litoral, Universidad de la Rep\'{u}blica, Salto, Uruguay}
\email{jpborthagaray@unorte.edu.uy}
\thanks{JPB has been supported in part by NSF grant DMS-1411808.}

\author[W.~Li]{Wenbo~Li}
\address[W.~Li]{Department of Mathematics, The University of Tennessee, Knoxville, USA}
\email{wli50@utk.edu}
\thanks{WL has been supported in part by NSF grant DMS-1411808 and the Patrick and
	Marguerite Sung Fellowship in Mathematics.}

\author[R.H.~Nochetto]{Ricardo H.~Nochetto}
\address[R.H.~Nochetto]{Department of Mathematics and Institute for Physical Science and Technology, University of Maryland, College Park, USA}
\email{rhn@umd.edu}
\thanks{RHN has been supported in part by NSF grant DMS-1411808}


\subjclass[2000]{
49Q05,                    
35R11,                    
65N12,                    
65N30}                    

\keywords{nonlocal minimal surfaces, finite elements, fractional diffusion.}

\maketitle


\begin{abstract}
In this paper, we propose and analyze a finite element discretization for the computation of fractional minimal graphs of order~$s \in (0,1/2)$ on a bounded domain $\Omega$. Such a Plateau problem of order $s$ can be reinterpreted as a Dirichlet problem for a nonlocal, nonlinear, degenerate operator of order $s + 1/2$. We prove that our numerical scheme converges in $W^{2r}_1(\Omega)$ for all $r<s$, where $W^{2s}_1(\Omega)$ is closely related to the natural energy space. Moreover, we introduce a geometric notion of error that, for any pair of $H^1$ functions, in the limit $s \to 1/2$ recovers a weighted $L^2$-discrepancy between the normal vectors to their graphs. We derive error bounds with respect to this novel geometric quantity as well. In spite of performing approximations with continuous, piecewise linear, Lagrangian finite elements, the so-called {\em stickiness} phenomenon becomes apparent in the numerical experiments we present.
\end{abstract}

\section{Introduction} \label{sec:intro}
Several complex phenomena, such as those involving surface tension, can be interpreted in terms of perimeters. In general, perimeters provide a good local description of these intrinsically nonlocal phenomena. The study of fractional minimal surfaces, which can be interpreted as a non-infinitesimal version of classical minimal surfaces, began with the seminal works by Imbert \cite{Imbert09} and Caffarelli, Roquejoffre and Savin \cite{CaRoSa10}. 

As a motivation for the notion of fractional minimal sets let us show how it arises in the study of   a nonlocal version of the Ginzburg-Landau energy, extending a well-known result for classical minimal sets \cite{Modica87, ModicaMortola77}. Let $\Omega \subset \mRd$ be a bounded set with Lipschitz boundary, $\varepsilon > 0$ and define the energy
\[
\mathcal{J}_{\ve}[u;\Omega] = \frac{\ve^2}{2} \int_\Omega |\nabla u(x)|^2 \; dx + \int_{\Omega} W(u(x)) \; dx,
\]
where $W(t) = \frac14(1-t^2)^2$ is a double-well potential. Then, for every sequence $\{ u_\eps \}$ of minimizers of the rescaled functional $\mathcal{F}_{\ve}[u;\Omega]=\eps^{-1}\mathcal{J}_{\ve}[u;\Omega]$ with uniformly bounded energies there exists a subsequence $\{ u_{\eps_k} \}$ such that
\[ u_{\eps_k} \to \chi_E - \chi_{E^c} \quad \mbox{in } L^1(\Omega),
\]
where $E$ is a set with minimal perimeter in $\Omega$.
Analogously, given $s \in (0,1)$, consider the energy
\[
\mathcal{J}^s_{\ve}[u;\Omega] = \frac{\ve^{2s}}{2}\iint_{Q_{\Omega}} \frac{|u(x) - u(y)|^2}{|x-y|^{n+2s}} \; dx dy + \int_{\Omega} W(u(x)) \; dx,
\]
where ${Q_{\Omega} = \l( \mRd \times \mRd \r) \setminus \l( {\Omega}^c \times {\Omega}^c \r)}$,
and rescale it as
\[
\mathcal{F}^s_\ve[u;\Omega] =
\left\lbrace \begin{array}{ll}
\ve^{-2s} \mathcal{J}^s_{\ve}[u;\Omega] & \mbox{if } s \in (0,1/2); \\
\ve^{-1} | \log \ve|^{-1} \mathcal{J}^s_{\ve}[u;\Omega] & \mbox{if } s = 1/2; \\
\ve^{-1} \mathcal{J}^s_{\ve}[u;\Omega] & \mbox{if } s \in (1/2,1) .
\end{array}
\right.
\]
Note that the first term in the definition of $\mathcal{J}^s_\eps$ involves the $H^s(\mRd)$-norm of $u$, except that the interactions over $\Omega^c \times \Omega^c$ are removed; for a minimization problem in $\Omega$, these are indeed fixed. As proved in \cite{SaVa12Gamma}, for every sequence $\{ u_\eps \}$ of minimizers of $\mathcal{F}^s_{\ve}$ with uniformly bounded energies there exists a subsequence $\{ u_{\eps_k} \}$ such that
\[ u_{\eps_k} \to \chi_E - \chi_{E^c} \quad \mbox{in } L^1(\Omega) \quad \mbox{as } \ve_k \to 0^+.
\]
If $s \in [1/2,1)$, then $E$ has minimal classical perimeter in $\Omega$, whereas if $s \in (0,1/2)$, then $E$ minimizes the nonlocal $s$-perimeter functional given by \Cref{def:s-perimeter}.

Other applications of nonlocal perimeter functionals include motions of fronts by nonlocal mean curvature \cite{CaSo10,ChaMorPon12,ChaMorPon15} and nonlocal free boundary problems \cite{CaSaVa15,DipiSavinVald15,DipiVald18}. We also refer the reader to 
\cite[Chapter 6]{BucurValdinoci16} and \cite{CoFi17} for nice introductory expositions to the topic.

The goal of this work is to design and analyze finite element schemes in order to compute fractional minimal sets over cylinders $\Omega\times\mRd$ in $\mRdp$, provided the external data is a subgraph. In such a case, minimal sets turn out to be subgraphs in the interior of the domain $\Omega$ as well, and the minimization problem for minimal sets can be equivalently stated as a minimization problem for a functional acting on functions $u \colon \mRd \to \mR$, given by
\begin{equation}\label{Is}
I_s[u] = \iint_{Q_{\Omega}} F_s\l(\frac{u(x)-u(y)}{|x-y|}\r) \frac{1}{|x-y|^{d+2s-1}} \;dxdy,
\end{equation}
where $Q_\Omega = (\mRd\times\mRd) \setminus(\Omega^c\times\Omega^c)$, $\Omega^c=\mRd\setminus\Omega$ is the complement of $\Omega$ in $\mRd$ and $F_s:\mR\to\mR$ is a suitable convex nonnegative function. This is the $s$-fractional version of the classical graph area functional
\[
I [u] = \int_\Omega \sqrt{1 + |\nabla u (x)|^2 } \, dx 
\]
among suitable functions $u:\Omega\to\mR$ satisfying the Dirichlet condition $u=g$ on $\partial\Omega$. A crucial difference between the two problems is that the Dirichlet condition for $I_s[u]$ must be imposed in $\Omega^c$, namely
\[
u = g \quad\text{in }\Omega^c.
\]
We propose a discrete counterpart of $I_s[u]$ based on piecewise linear Lagrange
finite elements on shape-regular meshes, and prove a few properties of the discrete
solution $u_h$, including convergence in $W^{2r}_1(\Omega)$ for any $0<r<s$ as the
meshsize $h$ tends to $0$. We point out that $u \in W^{2s}_1(\Omega)$ is the minimal regularity needed to guarantee that $I_s[u]$ is finite. We also derive error estimates for a novel geometric quantity related to the concept of fractional normal.

A minimizer of $I_s[u]$ satisfies the variational equation
\begin{equation}\label{variation-s}
\iint_{Q_{\Omega}} \widetilde{G}_s\l(\frac{u(x)-u(y)}{|x-y|}\r)
\frac{\big(u(x)-u(y)\big)\big(v(x)-v(y)\big)}{|x-y|^{d+2s+1}} \;dxdy = 0
\end{equation}
for all functions $v\in W^{2s}_1(\Omega)$ such that $v=0$ in $\Omega^c$. Hereafter,
$\widetilde{G}_s(\rho):=\rho^{-1} F_s'(\rho)$ has the property that
$\widetilde{G}_s(\rho)\to0$
as $|\rho|\to\infty$, which makes the equation for $u$ both nonlinear and degenerate.
This extends to $0<s<1$ the equation
\begin{equation}\label{variation-1}
\int_\Omega \frac{\nabla u(x) \cdot \nabla v(x)}{\sqrt{1+|\nabla u(x)|^2}} \;dx = 0
\end{equation}
for minimal graphs. Moreover, this extends the quadratic case $F_s(\rho)=C_s\rho^2$,
which leads to the equation for the integral fractional Laplacian $(-\Delta)^s$ of order
$s+1/2<1$,
\[
\langle (-\Delta)^s u, v \rangle = 2 C_s \iint_{Q_{\Omega}} 
\frac{\big(u(x)-u(y)\big)\big(v(x)-v(y)\big)}{|x-y|^{d+2s+1}} \;dxdy = 0.
\]
Our nonlinear solver hinges on the linear solver for $(-\Delta)^s u$ of \cite{AcosBersBort2017short}. In fact, we develop a discrete gradient flow and a Newton method, which are further discussed in \cite{BoLiNo19computational} along with several numerical experiments that illustrate and explore the boundary behavior of $u$. In this paper we present simple numerical experiments.

Let us briefly review the literature on finite element discretizations of $(-\Delta)^s$ on bounded domains $\Omega$ in $\mRd$. We refer to \cite{AcosBersBort2017short, AcosBort2017fractional, AiGl18, BoLePa17, DEliaGunzburger} for homogeneous Dirichlet conditions $g=0$ in $\Omega^c$ as well as details on convergence of the schemes and their implementation.
On the other hand, methods have been proposed to deal with arbitrary nonhomogeneous Dirichlet conditions $g\ne0$ in $\Omega^c$, either based on weak imposition of the datum by using Lagrange multipliers \cite{AcBoHe18} or on the approximation of the Dirichlet problem by Robin exterior value problems \cite{AntilKhatriWarma18}. We refer to the survey \cite{BBNOS18} for additional discussion, comparison of methods, and references.
Moreover, the fractional obstacle problem for $(-\Delta)^s$ has been studied in \cite{BoLeSa18,BoNoSa18,BuGu18}, where regularity estimates and convergence rates are derived.

This paper seems to be the first to treat numerically fractional minimal graphs.  We now outline its contents and organization.
\Cref{sec:formulation} deals directly with the functional $I_s$, thereby avoiding a lengthly discussion of fractional perimeters, which is included in Appendix \ref{appendix:perimeter}.
\Cref{sec:formulation} studies some properties of $I_s$ and introduces the variational formulation \eqref{variation-s}.
The discrete formulation and the necessary tools for its analysis, such as localization of fractional order seminorms, quasi-interpolation operators and interpolation estimates are described in \Cref{sec:numerical-method}. 
In \Cref{sec:convergence} we show that our discrete energy is consistent. This leads to convergence of discrete solutions $u_h$ to $s$-minimal graphs in $W^{2r}_1(\Omega)$ for every $0< r < s$ as the largest element size $h$ tends to $0$ without any additional regularity of $u$ beyond $u\in W^{2s}_1(\Omega)$.

A more intrinsic error measure than the Sobolev norm in $W^{2r}_1(\Omega)$ exploits the geometric structure of $I_s$. For the classical case $s=1$, set $\widehat{\nu}(\pmb{a}) = \frac{(\pmb{a},-1)}{Q(\pmb{a})}$ with $Q(\pmb{a})=\sqrt{1+|\pmb{a}|^2}$ and consider the geometric error between two functions $u,v \colon \Omega \to \mR$
\[
e(u,v) = \l(\int_{\Omega} \ \Big| \widehat{\nu}(\nabla u) - \widehat{\nu}(\nabla v) \Big|^2 \;\frac{Q(\nabla u) + Q(\nabla u_h)}{2} dx \r)^{\frac12}.
\]
This quantity $e(u,v)$ gives a weighted $L^2$-estimate of the discrepancy between the unit normals to the graphs of $u$ and $v$ \cite{FierroVeeser03}. \Cref{sec:geometric-error} deals with a novel nonlocal geometric quantity $e_s(u,v)$ that mimics $e(u,v)$. We first derive an upper bound for $e_s(u,u_h)$, where $u$ is the $s$-minimal graph and $u_h$ is its discrete counterpart, without regularity of $u$ as well as error estimates under realistic regularity assumptions on $u$. We next prove that the nonlocal quantity $e_s(u,v)$ recovers its local counterpart $e(u,v)$ as $s \to \frac12^-$. In doing so, we also prove convergence of the forms \eqref{variation-s} to \eqref{variation-1} as $s \to \frac12^-$.

\Cref{sec:numerics} presents experiments that illustrate the performance of the proposed numerical methods and the behavior of $s$-minimal graphs. We conclude with some technical material in appendices. We collect definitions of and results for fractional perimeters in Appendix \ref{appendix:perimeter} and exploit them to derive the energy $I_s$ in Appendix \ref{appendix:energy}. \looseness=-1

\section{Formulation of the problem} \label{sec:formulation}
As a motivation for the formulation of the problem we are concerned with in this work, we first visit the classical minimal graph problem. Let $\Omega \subset \mRd$ be an open set with sufficiently smooth boundary, and let $g \colon \partial \Omega \to \mRd$ be given. Then, the Plateau problem consists in finding $u \colon \Omega \to \mRd$ that minimizes the graph surface area functional
\begin{equation} \label{E:MS-Energy-Graph}
I [u] = \int_\Omega \sqrt{1 + |\nabla u (x)|^2 } \, dx 
\end{equation}
among a suitable class of functions satisfying $u = g$ on $\partial \Omega$. For simplicity, let us assume that such a class is a subset of $H^1(\Omega)$. By taking first variation of $I$, it follows that the minimizer $u$ satisfies
\begin{equation} \label{eq:classical-MS}
\int_\Omega \frac{\nabla u(x) \cdot \nabla v (x)}{\sqrt{1 + |\nabla u (x)|^2 }} \, dx = 0 , \quad \forall \, v \in H^1_0(\Omega).
\end{equation}

The integral on the left above can be understood as a weighted form, where the weight depends on the solution $u$. Identity \eqref{eq:classical-MS} is the starting point for classical approaches to discretize the graph Plateau problem \cite{Ciarlet02, JoTh75, Rannacher}.

We now fix $s \in (0,1/2)$ and consider the $s$-perimeter operator $P_s$ given by \Cref{def:s-perimeter}. 
Like for the classical minimal surface problem, one may study the nonlocal minimal surface problem under the restriction of the domain being a cylinder. A difference between the problem we consider in this paper and its classical counterpart is that here imposition of Dirichlet data on the boundary of the domain becomes meaningless and thus we require that the exterior data can be written as a subgraph with respect to a fixed direction.
Concretely, from now on we consider $\Omega' = \Omega \times \mR$ with $\Omega \subset \mRd$  bounded. We assume that the exterior datum is the subgraph of some given function $g: \mRd \setminus \Omega \to \mR$,
\begin{equation} \label{E:Def-E0}
E_0 = \l\{ (x', x_{d+1}) \colon x_{d+1} < g(x'), \; x' \in \mRd \setminus \Omega \r\}.
\end{equation}

\begin{Remark}[assumptions on data] \label{R:assumptions}
Many of the results we describe in this paper are not optimal, in the sense that the assumptions can be weakened. In particular, this applies to the domain $\Omega$ and the Dirichlet datum $g$. About the latter, most of the theory can be carried out by assuming $g$ to be locally bounded and with some growth condition at infinity. However, in view of the proposed numerical method, we consider this exterior data function to be uniformly bounded and with bounded support.
More precisely, unless otherwise stated, from now on we assume that
\begin{equation} \label{E:assumptions} \begin{aligned}
& \Omega \mbox{ is a bounded Lipschitz domain;} \\
& g \in L^\infty(\mRd) \mbox{ with compact support.}
\end{aligned} \end{equation}
\end{Remark}

We leave all the technical discussion about the well-posedness of the nonlocal minimal graph problem to \Cref{appendix:perimeter}, but here we only mention two important features to take into account. The first one is that, in this setting, the notion of $s$-minimal set becomes meaningless, as every set $E$ that coincides with $E_0$ in $\Omega'$ satisfies $P_s(E,\Omega') =\infty$; the correct notion to consider is the one of {\em locally $s$-minimal set}. The second important feature is the existence of a locally $s$-minimal set in $\Omega'$ that coincides with the exterior datum \eqref{E:Def-E0}, and that actually corresponds to the subgraph of a function $u$ in $\Omega$, that is,
\begin{equation}\label{E:Def-E}
E \cap \Omega' = \l\{ (x', x_{d+1}) \colon x_{d+1} < u(x'), \; x' \in \Omega \r\}.
\end{equation}

\begin{Remark}[solving the graph nonlocal Plateau problem] \label{R:solutions}
\Cref{appendix:perimeter} explains that, in order to to find the locally $s$-minimal graph in $\Omega'$,
it suffices to take $M$ large enough, consider $\Omega_M = \Omega \times [-M,M]$, and then seek a function $u$ in the class
\begin{equation*}
\l\{ u \colon \mRd \to \mR : \  \Vert u \Vert_{L^{\infty}(\Omega)} \leq M, \ u = g \mbox{ in } \Omega^c \r\}
\end{equation*}
such that the set $E := \{ (x', x_{d+1}): x_{d+1} < u(x') \}$ satisfies 
\[
P_s(E, \Omega_M) \le P_s(F, \Omega_M)
\]
for every set $F$ that coincides with $E$ outside $\Omega_M$.
\end{Remark}

\subsection{An energy functional}
According to \eqref{E:Def-E}, the $s$-minimal sets we aim to approximate in this work are subgraphs. When restricting the fractional perimeter functional $P_s(\cdot, \Omega_M)$ to subgraphs of functions that coincide with some given $g$ in $\Omega^c$, it is convenient to work with an operator acting on the function $u$ rather than on the set $E$. 
More precisely,  consider the set $Q_{\Omega} = \l( \mRd \times \mRd \r) \setminus \l( {\Omega}^c \times {\Omega}^c \r)$ and the function $F_s \colon \mR \to \mR$, 
\begin{equation} \label{E:def_Fs}
F_s(\rho) := \int_0^\rho \frac{\rho-r}{\l( 1+r^2\r)^{(d+1+2s)/2}} dr.
\end{equation}

In order to study the Plateau problem for nonlocal minimal graphs, we introduce the energy
\begin{equation}\label{E:NMS-Energy-Graph}
I_s[u] = \iint_{Q_{\Omega}} F_s\l(\frac{u(x)-u(y)}{|x-y|}\r) \frac{1}{|x-y|^{d+2s-1}} \;dxdy.
\end{equation}
This functional is the nonlocal analogue of \eqref{E:MS-Energy-Graph}. Indeed, there exists a direct relation between the operator $I_s$ and the $s$-perimeter.
As pointed out in Remark \ref{R:solutions}, in order to find nonlocal minimal graphs on $\Omega$ it suffices to find minimizers of the fractional $s$-perimeter $P_s(E,\Omega_M)$ for $M$ sufficiently large.  The next proposition shows that, in the graph setting, if $M$ is large enough then the $s$-perimeter $P_s(E,\Omega_M)$ can be written as the sum of a term depending only on $u$ plus a term that is independent of $u$, albeit it blows up as $M \to \infty$. For completeness, we include a proof of this proposition in \Cref{appendix:energy}; we refer also to \cite[Proposition 4.2.8]{Lombardini-thesis}.

\begin{Proposition}[relation between $P_s$ and $I_s$] \label{prop:perimeter-energy}
Let $\Omega \subset \mRd$ be a bounded Lipschitz domain, $g \in L^\infty(\Omega^c)$, $M \ge \| g \|_{L^\infty(\Omega^c)}$, $\Omega' = \Omega \times \mR$ and $\Omega_M = \Omega \times [-M,M]$. Then, for every set $E$ of the type of \eqref{E:Def-E} with $\|u\|_{L^\infty(\Omega)} \le M$, it holds that
\[
P_s (E, \Omega_M) = I_s[u] + C(M,d,s,\Omega,g), 
\]
where $I_s[u]$ is given according to \eqref{E:NMS-Energy-Graph}. 
\end{Proposition}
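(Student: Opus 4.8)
The plan is to reduce the $(d{+}1)$‑dimensional double integral defining $P_s(E,\Omega_M)$ to one‑dimensional integrals over pairs of vertical fibers $\{x'\}\times\mR$ and $\{y'\}\times\mR$, and then match those fiber contributions, one unordered pair at a time, against the integrand of $I_s$.

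First I would unfold $P_s(E,\Omega_M)=\mathcal I(E\cap\Omega_M,E^c)+\mathcal I(E\setminus\Omega_M,E^c\cap\Omega_M)$, where $\mathcal I(A,B):=\iint_{A\times B}|X-Y|^{-(d+1+2s)}\,dX\,dY$. Since $E$ is the subgraph of $u$ (extended by $g$ in $\Omega^c$) and $\|u\|_{L^\infty(\Omega)},\|g\|_{L^\infty}\le M$, the sets appearing admit a simple fibered description: e.g.\ $E\cap\Omega_M=\{(x',t):x'\in\Omega,\ -M<t<u(x')\}$, $E\setminus\Omega_M=\big((\Omega^c\times\mR)\cap E\big)\cup\{(x',t):x'\in\Omega,\ t<-M\}$, $E^c\cap\Omega_M=\{(x',t):x'\in\Omega,\ u(x')<t<M\}$. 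Feeding these in and using Tonelli, $P_s(E,\Omega_M)$ becomes a finite sum of iterated integrals whose inner piece is always of the form $\int_{t<a}\int_{\tau>b}(|x'-y'|^2+(t-\tau)^2)^{-(d+1+2s)/2}\,dt\,d\tau$, with $a,b\in\{u(\cdot),\pm M,g(\cdot)\}$, and whose outer variable $(x',y')$ ranges over $\Omega\times\Omega$, $\Omega\times\Omega^c$ or $\Omega^c\times\Omega$ — there is nothing over $\Omega^c\times\Omega^c$, which is exactly why $Q_\Omega$ omits that set. The computational core is the slicing identity: with $p=a-t$, $q=\tau-b$, $\xi=p+q$ and a rescaling,
\[
\int_{t<a}\int_{\tau>b}\frac{dt\,d\tau}{(r^2+(t-\tau)^2)^{(d+1+2s)/2}}=\frac{1}{r^{d+2s-1}}\Big[F_s\Big(\tfrac{a-b}{r}\Big)+a_{d,s}\,\tfrac{a-b}{r}+b_{d,s}\Big],\qquad r>0,
\]
with $F_s$ as in \eqref{E:def_Fs} and $a_{d,s}=\int_0^\infty(1+\varrho^2)^{-(d+1+2s)/2}d\varrho$, $b_{d,s}=\int_0^\infty\varrho(1+\varrho^2)^{-(d+1+2s)/2}d\varrho$ finite; moreover the substitution $r\mapsto-r$ in \eqref{E:def_Fs} shows that $F_s$ is even.

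Next I would match term by term at the level of an unordered fiber pair $\{x',y'\}$. Such a pair occurs in $Q_\Omega$ in both orders, so its total contribution to $I_s[u]$ is $|x'-y'|^{-(d+2s-1)}(F_s(\rho)+F_s(-\rho))=2|x'-y'|^{-(d+2s-1)}F_s(\rho)$, with $\rho=(u(x')-u(y'))/|x'-y'|$. On the $P_s$ side I would collect the fiber contributions coming from the various terms of Step~1, let the truncations at $\pm M$ telescope back to $\mp\infty$ wherever the fiber ranges combine to a full line, and symmetrize in $(x',y')$; then, because $\rho(y',x')=-\rho(x',y')$, the odd terms $a_{d,s}(a-b)/r$ cancel \emph{pointwise}, and since $F_s$ is even the contribution is again $2|x'-y'|^{-(d+2s-1)}F_s(\rho)$, plus a remainder that no longer depends on $u|_\Omega$: a combination of $b_{d,s}|x'-y'|^{-(d+2s-1)}$ and of truncation integrals involving only $M$, $g$, $d$, $s$. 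Summing over fiber pairs, the $F_s$‑parts reassemble into $\iint_{Q_\Omega}F_s(\rho)|x-y|^{-(d+2s-1)}dx\,dy=I_s[u]$ (using $Q_\Omega=(\Omega\times\Omega)\sqcup(\Omega\times\Omega^c)\sqcup(\Omega^c\times\Omega)$ and a relabeling $x'\leftrightarrow y'$ on the last piece), so what remains is to show that the sum of the remainders is a finite constant $C=C(M,d,s,\Omega,g)$.

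The crux — and the step I expect to be the main obstacle — is precisely this finiteness, which has to be seen \emph{jointly} rather than term by term: individually $\iint_{\Omega\times\Omega^c}b_{d,s}|x'-y'|^{-(d+2s-1)}$ diverges at infinity, while some of the truncation integrals are singular near $\partial\Omega$; only their combination is integrable. The clean way is to go back to the raw fiber contribution $T(x',y')$ for a pair with $x'\in\Omega$, $y'\in\Omega^c$: since the $\Omega^c$‑fiber never meets $\Omega_M$, only the part of the $\Omega$‑fiber with height in $(-M,M)$ interacts, whence $T(x',y')\le 4M\,\|(1+\sigma^2)^{-(d+1+2s)/2}\|_{L^1(\mR)}\,|x'-y'|^{-(d+2s)}$ uniformly in $u$; and the corresponding $I_s$‑integrand obeys the same bound because $F_s(\rho)\le a_{d,s}|\rho|$. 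Hence the $\Omega\times\Omega^c$ remainder is $O(|x'-y'|^{-(d+2s)})$ uniformly, and $\iint_{\Omega\times\Omega^c}|x'-y'|^{-(d+2s)}dx'dy'<\infty$ because (i) at infinity $g$ has compact support and $d+2s>d$, and (ii) near $\partial\Omega$ the constraint $x'\in\Omega$, $y'\in\Omega^c$ forces $\int_{\Omega^c}|x'-y'|^{-(d+2s)}dy'\lesssim\dist(x',\pO)^{-2s}$, and $\int_\Omega\dist(x',\pO)^{-2s}dx'<\infty$ since $\Omega$ is Lipschitz and $2s<1$. The $\Omega\times\Omega$ remainder is simpler: $\Omega$ is bounded and the only singular piece, $b_{d,s}|x'-y'|^{-(d+2s-1)}$, is integrable across the diagonal exactly because $s<1/2$, while the truncation integrals are bounded. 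Adding the (absolutely convergent, $u|_\Omega$‑independent) remainders yields $C(M,d,s,\Omega,g)$; all the rearrangements are legitimate by Tonelli since every discarded piece is nonnegative, and if $I_s[u]=\infty$ the asserted identity holds as $\infty=\infty+C$.
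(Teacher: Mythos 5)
Your proof is correct and reaches the same identity as the paper's Appendix~B by the same overall strategy: decompose $P_s(E,\Omega_M)$ into the two $L_s$-interactions, reduce each to iterated one-dimensional fiber integrals indexed by $(x',y')\in\Omega\times\Omega$, $\Omega\times\Omega^c$, $\Omega^c\times\Omega$, symmetrize in $(x',y')$, identify the $F_s$ contribution, and collect the $u$-independent, finite remainder. The one genuine organizational difference is in the bookkeeping of the constant: you first extend the $\pm M$-truncated fiber integrals to full half-lines and apply the closed-form slicing identity $\int_{t<a}\int_{\tau>b}(r^2+(t-\tau)^2)^{-(d+1+2s)/2}=r^{1-d-2s}\bigl[F_s(\rho)+a_{d,s}\rho+b_{d,s}\bigr]$ with $\rho=(a-b)/r$, then correct by corner integrals; the paper instead keeps the truncations, combines $I_1+II_1$ into $\int_{-2M/r}^{\rho}dt\int_{-t}^{\infty}$, and splits at $t=0$ so that $\int_0^{|\rho|}\int_{-t}^{t}$ produces $2F_s(\rho)$ while $\int_{-2M/r}^{0}$ is the constant. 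Your reformulation makes the role of parity explicit and cleaner, and you correctly flag the one place where care is needed: the $a_{d,s}\rho$ term is \emph{not} absolutely integrable near the diagonal (nor is $b_{d,s}/r^{d+2s-1}$ over $\Omega\times\Omega^c$ at infinity), so the cancellation must be kept pointwise in the symmetrized integrand rather than justified by splitting the integral; the paper's manipulation avoids ever writing those divergent pieces in isolation, so it never confronts this. Your uniform $O(M\,|x'-y'|^{-(d+2s)})$ bound for the $\Omega\times\Omega^c$ fiber contribution, together with the Hardy-type integrability $\int_\Omega\dist(\cdot,\partial\Omega)^{-2s}<\infty$ for $\Omega$ Lipschitz and $2s<1$, is a valid and arguably tidier replacement for the paper's direct verification that $C_2$ and the $\pm M$-shifted $F_s$ integrals are finite.
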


An immediate consequence of this decomposition is that, if $M \ge \| g \|_{L^\infty(\Omega^c)}$, then the minimizer $u$ is independent of the truncation parameter $M$. Even though in the limit $M \to \infty$ the fractional perimeter is trivially equal to infinity, the function $u$ we compute has the `good credentials' to be regarded as a fractional minimal surface in the cylinder $\Omega'$. We recall that there cannot exist an $s$-minimal set on $\Omega'$ 
that coincides with the subgraph of a bounded function in $\Omega'^c$.

\begin{Remark}[growth of exterior data]
The functional $I_s$ may not be well-defined for functions that coincide with $g$ on $\Omega^c$ unless $g$ does not grow too fast at infinity. Nevertheless we point out that, as described in \eqref{E:assumptions}, in this work we assume that $g$ is bounded and with bounded support in $\mRd$. 
\end{Remark}

Our next goal is to define the correct space in which to look for minimizers of the energy functional $I_s$. We start with an auxiliary result.

\begin{Lemma}[energy bounds] \label{lemma:W2s}
Let $\Omega \subset \mRd$ be bounded.
Then, there exist constants $C_1(d,\Omega,s),$ $C_2(d,s)$ and $C_3(d,s)$ such that,
for every function $v \colon \mRd \to \mR$ it holds that
\begin{equation} \label{eq:norm_bound} \begin{aligned}
& |v|_{W^{2s}_1(\Omega)} \le C_1 + C_2 I_s[v], \\
& I_s[v] \le C_3 \iint_{Q_{\Omega}}  \frac{|v(x)-v(y)|}{|x-y|^{d+2s}} dxdy .
\end{aligned} \end{equation}
\end{Lemma}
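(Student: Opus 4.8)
The plan is to establish the two bounds by analyzing the integrand $F_s(\rho)/|x-y|^{d+2s-1}$ near $\rho = 0$ and as $|\rho| \to \infty$, and comparing it with the kernels defining the $W^{2s}_1(\Omega)$-seminorm and the $W^1$-type quantity on the right of the second inequality. First I would record the elementary pointwise estimates on $F_s$ itself. From the definition $F_s(\rho) = \int_0^\rho (\rho-r)(1+r^2)^{-(d+1+2s)/2}\,dr$, one checks by direct integration (or by Taylor expansion) that $F_s(\rho) \le \frac{1}{2}\rho^2$ for all $\rho$ (since the integrand is bounded by $\rho - r$), and that $F_s(\rho) \sim c_{d,s}\,|\rho|$ as $|\rho| \to \infty$, because $F_s'(\rho) = \int_0^\rho (1+r^2)^{-(d+1+2s)/2}\,dr \to c_{d,s} := \int_0^\infty (1+r^2)^{-(d+1+2s)/2}\,dr < \infty$ (the exponent $(d+1+2s)/2 > 1/2$ guarantees convergence). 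Hence there is a constant $c$ with $F_s(\rho) \le c\,\min\{\rho^2, |\rho|\}$ and, for the lower bound, $F_s(\rho) \ge c'\,(|\rho| - 1)_+$ say, or more simply $F_s(\rho) \ge c'' |\rho|$ for $|\rho| \ge 1$.

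For the second inequality $I_s[v] \le C_3 \iint_{Q_\Omega} \frac{|v(x)-v(y)|}{|x-y|^{d+2s}}\,dxdy$, I would split $Q_\Omega$ into the region where $|u(x)-u(y)| \le |x-y|$ and its complement. On the first region, $F_s\big(\tfrac{u(x)-u(y)}{|x-y|}\big) \le c \big(\tfrac{u(x)-u(y)}{|x-y|}\big)^2 \le c \tfrac{|u(x)-u(y)|}{|x-y|}$, so the integrand is bounded by $c\,\tfrac{|u(x)-u(y)|}{|x-y|^{d+2s}}$. On the second region, $F_s\big(\tfrac{u(x)-u(y)}{|x-y|}\big) \le c\,\tfrac{|u(x)-u(y)|}{|x-y|}$ directly from the linear growth bound, giving the same majorant. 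Adding the two pieces yields the claim with $C_3$ depending only on $d,s$.

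For the first inequality, I would use $F_s(\rho) \ge c''|\rho|$ on $\{|\rho| \ge 1\}$ to write, over the subregion of $Q_\Omega$ where $|u(x)-u(y)| \ge |x-y|$,
\[
\iint_{|u(x)-u(y)| \ge |x-y|} \frac{|u(x)-u(y)|}{|x-y|^{d+2s}}\,dxdy \le \frac{1}{c''} I_s[v].
\]
On the complementary region $|u(x)-u(y)| < |x-y|$, one has $\tfrac{|u(x)-u(y)|}{|x-y|^{d+2s}} < |x-y|^{-(d+2s-1)}$, which is integrable over $(Q_\Omega) \cap \{|x-y|<1\}$ since $d+2s-1 < d$, contributing a bounded constant; the far-field part $|x-y| \ge 1$ of this region is handled using that $v=$ (difference quotient controlled) — more carefully, one splits $Q_\Omega = (\Omega\times\mRd) \cup (\mRd\times\Omega)$ and on each copy integrates the outer variable over $\Omega$, which has finite measure, making the tail in $|x-y|$ convergent. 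This produces $|v|_{W^{2s}_1(\Omega)} = \iint_{Q_\Omega} \tfrac{|v(x)-v(y)|}{|x-y|^{d+2s}}\,dxdy \le C_1(d,\Omega,s) + C_2(d,s) I_s[v]$.

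The main obstacle I anticipate is the bookkeeping for the "diagonal" region $\{|u(x)-u(y)| < |x-y|\}$ in the first inequality: there $I_s[v]$ carries essentially no information (the integrand is merely $O(|x-y|^{2-d-2s})$, which is integrable regardless of $v$), so the $W^{2s}_1$-seminorm there must be absorbed into the constant $C_1$ rather than controlled by the energy — this requires carefully using the boundedness of $\Omega$ and the fact that $s < 1/2$ so that $d+2s-1 < d$, ensuring local integrability, together with a splitting of $Q_\Omega$ into one bounded factor to tame the tail at infinity. Everything else is a routine comparison of kernels.
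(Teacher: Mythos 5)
The pointwise bounds you extract on $F_s$ (namely $F_s(\rho)\lesssim\min\{\rho^2,|\rho|\}$ and $F_s(\rho)\gtrsim(|\rho|-1)_+$) are correct and are essentially the same ingredients the paper uses: the paper works directly with $F_s(\rho)\le C_3\rho$ and $\rho\le 1+C_2F_s(\rho)$ for $\rho\ge0$, which encodes your region-splitting more compactly. Your second inequality is therefore fine, if more roundabout than necessary (the global bound $F_s(\rho)\le C_3|\rho|$ makes the split into $|u(x)-u(y)|\lessgtr|x-y|$ superfluous).

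The first inequality has a genuine gap. You write
$|v|_{W^{2s}_1(\Omega)}=\iint_{Q_\Omega}\frac{|v(x)-v(y)|}{|x-y|^{d+2s}}\,dxdy$,
but this equality is false: by definition the Gagliardo seminorm $|v|_{W^{2s}_1(\Omega)}$ integrates only over $\Omega\times\Omega$, whereas the $Q_\Omega$-integral is the larger quantity $|v|_{\mathbb V^g}$ from Definition~\ref{Def:space-Vg}, which additionally includes $\Omega\times\Omega^c$ and $\Omega^c\times\Omega$. Your tail argument for that larger quantity then does not work: on the region $\{(x,y)\in\Omega\times\Omega^c:|u(x)-u(y)|<|x-y|,\ |x-y|\ge1\}$ you try to bound the integrand by $|x-y|^{-(d+2s-1)}$ and integrate the outer variable over $\Omega$, but for fixed $x\in\Omega$ one has $\int_{\{|y-x|\ge1\}}|x-y|^{-(d+2s-1)}\,dy\sim\int_1^\infty r^{-2s}\,dr=\infty$ because $2s<1$. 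So $\iint_{Q_\Omega}|x-y|^{-(d+2s-1)}\,dxdy$ is \emph{not} a finite constant, and the proof as written does not close. (In fact, bounding $|v|_{\mathbb V^g}$ by $C_1+C_2 I_s[v]$ for arbitrary $v$ is false; the paper's Proposition~\ref{Prop:domain-energy} needs the additional information $v=g\in L^\infty$ on $\Omega^c$ and a Hardy inequality to control the $\Omega\times\Omega^c$ part.) Once you restrict to the actual target $\iint_{\Omega\times\Omega}$, the issue evaporates: $\Omega$ is bounded, so $|x-y|\le\mathrm{diam}\,\Omega$ and $\iint_{\Omega\times\Omega}|x-y|^{-(d+2s-1)}\,dxdy<\infty$ gives the constant $C_1$ directly, exactly as in the paper's one-line argument. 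I recommend you correct the identification of the seminorm and drop the far-field discussion entirely.
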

\begin{proof}
From definition \eqref{E:def_Fs}, it follows immediately that $F_s(0) = 0$ and that
\[
F_s'(\rho) = \int_0^\rho \frac{1}{\l( 1+r^2\r)^{(d+1+2s)/2}} dr, \quad \forall \rho > 0.
\]
Thus, if we set the constant $C_3 = \int_0^\infty \frac{1}{\l( 1+r^2\r)^{(d+1+2s)/2}} dr$, we deduce that
\[
F_s(\rho) \le C_3 \rho \quad \forall \rho \ge 0.
\]
This implies the second inequality in \eqref{eq:norm_bound}.
	
On the other hand, the first estimate in \eqref{eq:norm_bound}, with constant $C_1 = \iint_{\Omega\times\Omega} \frac{dxdy}{|x-y|^{d+2s-1}} < \infty$ because $\Omega \subset \mRd$ is bounded, is a consequence of the bound 
\begin{equation} \label{eq:lower-bound-Fs}
\rho \le 1 + C_2 F_s(\rho) \quad \forall \rho \ge 0.
\end{equation}
It is obvious that such a bound holds for $0 \le \rho \le 1$, whereas if $\rho > 1$, we have $F'_s (\rho)  \ge F'_s (1)$ and therefore, $F_s(\rho) > F'_s(1) (\rho - 1)$. The desired inequality follows with constant $C_2 = 1/ F'_s(1)$.
\end{proof}

Taking into account the lemma we have just proved, we introduce the natural spaces in which to look for nonlocal minimal graphs.

\begin{Definition}[space $\mathbb{V}^g$] \label{Def:space-Vg}  
Given $g \colon \Omega^c \to \mR$, we consider
\begin{equation*}
\mathbb{V}^g = \{ v \colon \mRd \to \mR \; \colon \; v\big|_\Omega \in W^{2s}_1(\Omega), \ v = g \text{ in } {\Omega}^c, \ |v|_{\mathbb{V}^g} < \infty \}, 
\end{equation*}
equipped with the norm
\[
\| v \|_{\mathbb{V}^g } = \| v \|_{L^1(\Omega)} + | v |_{\mathbb{V}^g },
\]
where
\[
| v |_{\mathbb{V}^g} := \iint_{Q_{\Omega}}  \frac{|v(x)-v(y)|}{|x-y|^{d+2s}} dxdy.  
\]
\end{Definition}

In the specific case where $g$ is the zero function, we denote the resulting space $\mathbb{V}^g$ by $\mathbb{V}^0$. The set $\mathbb{V}^g$ can also be understood as that of functions in $W^{2s}_1 (\Omega)$ with `boundary value' $g$. Indeed, we point out that in Definition \ref{Def:space-Vg} we do not require $g$ to be a function in $W^{2s}_1(\Omega^c)$ (in particular, $g$ may not decay at infinity). The seminorm $|\cdot |_{\mathbb{V}^g}$ does not take into account interactions over $\Omega^c \times \Omega^c$, because these are fixed for the applications we consider.

As stated in the next Proposition, given a Dirichlet datum $g$, the space $\mathbb{V}^g$ is the natural domain of the energy $I_s$.

\begin{Proposition}[energy domain] \label{Prop:domain-energy} Let $s \in (0,1/2)$ and $\Omega, g$ be given according to \eqref{E:assumptions}. Let $v \colon \mRd \to \mR$ be such that $v = g$ in $\Omega^c$. Then, $v \in \mathbb{V}^g $ if and only if $I_s[v] < \infty$.
\end{Proposition}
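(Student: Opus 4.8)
The plan is to deduce both implications from the energy bounds in \Cref{lemma:W2s}, exploiting the assumptions \eqref{E:assumptions} on $g$ (bounded, compact support) to handle the interactions between $\Omega$ and $\Omega^c$. The direction ``$v\in\mathbb{V}^g\Rightarrow I_s[v]<\infty$'' is the trivial one: by \Cref{Def:space-Vg} we have $|v|_{\mathbb{V}^g}<\infty$, and the second inequality in \eqref{eq:norm_bound} immediately gives $I_s[v]\le C_3\,|v|_{\mathbb{V}^g}<\infty$.

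For the converse I would assume $I_s[v]<\infty$ and verify the three requirements of \Cref{Def:space-Vg} in order. First, $|v|_{W^{2s}_1(\Omega)}<\infty$ is nothing but the first inequality in \eqref{eq:norm_bound}. To deal with the remaining conditions $v\in L^1(\Omega)$ and $|v|_{\mathbb{V}^g}<\infty$, I would fix a ball $B_R$ with $\Oc\subset B_{R_0}$ for some $R_0<R$ and $\operatorname{supp}g\subset B_R$, split $Q_\Omega=(\Omega\times\Omega)\cup(\Omega\times\Omega^c)\cup(\Omega^c\times\Omega)$, and further split $\Omega^c=(\Omega^c\cap B_R)\cup(\Omega^c\setminus B_R)$. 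The pointwise tool throughout is the elementary bound $|\rho|\le 1+C_2 F_s(\rho)$ for all $\rho\in\mR$, which follows from \eqref{eq:lower-bound-Fs} and the parity $F_s(-\rho)=F_s(\rho)$ evident from \eqref{E:def_Fs}. Applied with $\rho=\frac{v(x)-g(y)}{|x-y|}$ it gives
\[
\frac{|v(x)-g(y)|}{|x-y|^{d+2s}}\;\le\;\frac{1}{|x-y|^{d+2s-1}}\;+\;C_2\,F_s\!\l(\frac{v(x)-g(y)}{|x-y|}\r)\frac{1}{|x-y|^{d+2s-1}},
\]
and integrating over the \emph{bounded} region $\Omega\times(\Omega^c\cap B_R)$ the first term is finite since $d+2s-1<d$ (this uses $s<1/2$) and the second is $\le C_2\,I_s[v]<\infty$; hence $\iint_{\Omega\times(\Omega^c\cap B_R)}\frac{|v(x)-g(y)|}{|x-y|^{d+2s}}\,dxdy<\infty$.

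I would then extract $v\in L^1(\Omega)$ by localizing this last bound to a small ball $B'\subset B_R\setminus\overline{B_{R_0}}$: such a $B'$ satisfies $B'\subset\Omega^c\cap B_R$ and $0<\dist(B',\Omega)$, and on $\Omega\times B'$ the quantity $|x-y|$ is bounded above and below, so $\iint_{\Omega\times B'}|v(x)-g(y)|\,dxdy<\infty$; combining this with $|v(x)|\le|v(x)-g(y)|+|g(y)|$ and $g\in L^\infty$ yields $|B'|\,\|v\|_{L^1(\Omega)}\le\iint_{\Omega\times B'}|v(x)-g(y)|\,dxdy+|\Omega|\,\|g\|_{L^1(B')}<\infty$. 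Once $v\in L^1(\Omega)$, we have $v|_\Omega\in W^{2s}_1(\Omega)$, and it remains to bound $|v|_{\mathbb{V}^g}$. Its $\Omega\times\Omega$ part equals $|v|_{W^{2s}_1(\Omega)}<\infty$; its $\Omega\times(\Omega^c\cap B_R)$ part was just shown to be finite; and on $\Omega\times(\Omega^c\setminus B_R)$ we have $g\equiv 0$, so the integrand is $\frac{|v(x)|}{|x-y|^{d+2s}}$ and, since $|x-y|\ge R-R_0>0$ there and $d+2s>d$, the inner integral $\int_{\Omega^c\setminus B_R}|x-y|^{-(d+2s)}\,dy$ is bounded uniformly in $x\in\Omega$, whence this part is $\le C\,\|v\|_{L^1(\Omega)}<\infty$. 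Using the symmetry of the two cross terms, $|v|_{\mathbb{V}^g}<\infty$, so $v\in\mathbb{V}^g$.

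The step I expect to be the real obstacle is precisely obtaining $v\in L^1(\Omega)$: the hypothesis $I_s[v]<\infty$ only directly controls the Gagliardo seminorm $|v|_{W^{2s}_1(\Omega)}$, not the full $W^{2s}_1$ norm, so interior information alone is not enough. The remedy is the well-separated exterior ball $B'$ above, which is available thanks to $g$ being bounded with compact support; the rest is routine bookkeeping of the integrable/non-integrable singularities of the kernels $|x-y|^{-(d+2s-1)}$ and $|x-y|^{-(d+2s)}$, which cause no trouble because $0<s<1/2$.
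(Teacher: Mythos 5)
Your proof is correct, and its skeleton matches the paper's: both directions come from \Cref{lemma:W2s}, with the trivial direction being the second inequality in \eqref{eq:norm_bound} and the real content being to upgrade $I_s[v]<\infty$ into membership in $\mathbb{V}^g$. Where you diverge from the paper is in the two technical steps of the converse. For $v\in L^1(\Omega)$, the paper integrates over the \emph{entire} annulus $B_R\setminus\Omega$, bounds $F_s(|v(x)|/|x-y|)$ by $F_s\bigl((v(x)-g(y))/|x-y|\bigr)+O(|g(y)|/|x-y|)$ via Lipschitz continuity of $F_s$, and reads off the lower bound $\|v\|_{L^1(\Omega)}/R^{2s}$; you instead localize to a small ball $B'$ at positive distance from $\Omega$, where the kernel is non-degenerate, so no Lipschitz step is needed. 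For $|v|_{\mathbb{V}^g}<\infty$, the paper controls the cross term $\iint_{\Omega\times\Omega^c}|v(x)|\,|x-y|^{-d-2s}$ by the fractional Hardy inequality of Grisvard, whereas you bypass Hardy altogether: the near piece $\Omega\times(\Omega^c\cap B_R)$ is exactly the quantity you already bounded via $|\rho|\le 1+C_2F_s(\rho)$ applied to $\rho=(v(x)-g(y))/|x-y|$, and the far piece $\Omega\times(\Omega^c\setminus B_R)$ is controlled by $\|v\|_{L^1(\Omega)}$ since there $g\equiv 0$ and $|x-y|$ is bounded below. Your route is more elementary — it avoids Hardy's inequality and the Lipschitz argument entirely — at the cost of leaning a bit harder on the compact support of $g$; the paper's route is shorter modulo Hardy and is arguably more robust to relaxing the decay hypotheses on $g$.
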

\begin{proof}
The claim follows easily from Lemma \ref{lemma:W2s}. Let $v$ be a function that coincides with $g$ in $\Omega^c$. Then, if $v \in \mathbb{V}^g$, the second estimate in \eqref{eq:norm_bound} gives $I_s[v] < \infty$, because $|v|_{\mathbb{V}^g} < \infty$. 

Reciprocally, if $I_s[v] < \infty$, the first inequality in \eqref{eq:norm_bound} implies that $| v \big|_\Omega|_{W^{2s}_1(\Omega)} < \infty$. Fix $R > 0$ such that $\Omega \subset B_{R/2}$; because of \eqref{eq:lower-bound-Fs} and the Lipschitz continuity of $F_s$, integrating over $\Omega \times (B_R \setminus \Omega)$, we obtain
\begin{equation} \label{eq:L1-bound}
\begin{aligned}
\frac{\| v \|_{L^1(\Omega)}}{R^{2s}} & \lesssim \iint_{\Omega \times (B_R \setminus \Omega)} \l(1 + F_s \l( \frac{|v(x)|}{|x-y|} \r)\r) \frac{dx dy}{|x-y|^{d+2s-1}} \\
& \lesssim  I_s[v] + \iint_{\Omega \times (B_R \setminus \Omega)}  \frac{1+|g(y)|}{|x-y|^{d+2s}} dx dy. 
\end{aligned}
\end{equation}
The last integral in the right hand side above is finite because  $\| g \|_{L^\infty} < \infty$. Therefore, $v \in L^1(\Omega)$. To deduce that $|v|_{\mathbb{V}^g} < \infty$, we split the integral, use the triangle inequality, integrate in polar coordinates and apply Hardy's inequality \cite[Theorem 1.4.4.4]{Grisvard} to derive
\begin{equation*}
\begin{aligned}
|v|_{\mathbb{V}^g} & \le | v \big|_\Omega|_{W^{2s}_1(\Omega)} + 2 \iint_{\Omega\times\Omega^c} \frac{|v(x)|}{|x-y|^{d+2s}} dx dy + 2 \iint_{\Omega\times\Omega^c} \frac{|g(y)|}{|x-y|^{d+2s}} dx dy \\
& \lesssim | v \big|_\Omega|_{W^{2s}_1(\Omega)} + \int_{\Omega} \frac{|v(x)|}{\dist(x,\pO)^{2s}} dx + \| g \|_{L^\infty(\Omega^c)} \lesssim \| v \big|_\Omega\|_{W^{2s}_1(\Omega)} + \| g \|_{L^\infty(\Omega^c)}.
\end{aligned} 
\end{equation*}
This proves that $v \in \mathbb{V}^g$ and concludes the proof.
\end{proof}

Taking into account \Cref{prop:perimeter-energy} and \Cref{Prop:domain-energy}, we obtain a characterization of nonlocal minimal graphs (see also \cite[Theorem 4.1.11]{Lombardini-thesis}).

\begin{Corollary}[relation between minimization problems] \label{Cor:characterization-NMS}
Let $s \in (0,1/2)$ and $\Omega, g$ satisfy \eqref{E:assumptions}. Given a function $u \colon \mRd \to \mR$ that coincides with $g$ in $\Omega^c$, consider the set $E$ given by \eqref{E:Def-E}. Then, $E$ is locally $s$-minimal in $\Omega' = \Omega \times \mR$ if and only if $u$ minimizes the energy $I_s$ in the space $\mathbb{V}^g$.
\end{Corollary}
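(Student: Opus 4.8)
The plan is to read off the equivalence from the exact decomposition in \Cref{prop:perimeter-energy}, which says that on subgraphs with $\|u\|_{L^\infty(\Omega)}\le M$ the functionals $P_s(\cdot,\Omega_M)$ and $I_s$ differ by a constant $C(M,d,s,\Omega,g)$ that does \emph{not} depend on $u$. Two reductions bridge the gap between the statement of the Corollary and this identity. The first, which I would simply quote from \Cref{appendix:perimeter} (see also \cite{Lombardini-thesis}), is that, because the exterior datum $E_0$ is a subgraph, local $s$-minimality of a subgraph $E$ in $\Omega'=\Omega\times\mR$ is equivalent to the existence of $M_0\ge\|g\|_{L^\infty(\Omega^c)}$ such that, for every $M\ge M_0$, $P_s(E,\Omega_M)\le P_s(F,\Omega_M)$ for every competitor $F$ agreeing with $E$ outside $\Omega_M$, and that it suffices to test this inequality against competitors $F$ which are themselves subgraphs of functions $w$ with $w=g$ in $\Omega^c$ and $\|w\|_{L^\infty(\Omega)}\le M$ (subgraph rearrangement does not increase the $s$-perimeter). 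The second reduction is to pass from such $L^\infty$-bounded subgraph competitors to the whole space $\mathbb{V}^g$; this is where the actual work lies.

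For the transfer, fix $M\ge\max\{M_0,\|g\|_{L^\infty}\}$ and an admissible subgraph competitor $F$ associated with $w$. If $P_s(F,\Omega_M)=\infty$ there is nothing to check, so assume it is finite; then \Cref{prop:perimeter-energy} gives $I_s[w]=P_s(F,\Omega_M)-C<\infty$, hence $w\in\mathbb{V}^g$ by \Cref{Prop:domain-energy}, and similarly $I_s[u]=P_s(E,\Omega_M)-C$ with the \emph{same} $C$. Consequently $P_s(E,\Omega_M)\le P_s(F,\Omega_M)$ if and only if $I_s[u]\le I_s[w]$, so by the first reduction $E$ is locally $s$-minimal if and only if $I_s[u]\le I_s[w]$ for all $w$ with $w=g$ in $\Omega^c$ and $\|w\|_{L^\infty(\Omega)}\le M$, for every large $M$.

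It remains to drop the constraint $\|w\|_{L^\infty(\Omega)}\le M$. The truncation $w_M:=\min\{M,\max\{-M,w\}\}$ is a contraction, so $|w_M(x)-w_M(y)|\le|w(x)-w(y)|$ for all $x,y$; since $F_s$ is even and nondecreasing on $[0,\infty)$ this yields $I_s[w_M]\le I_s[w]$, and $w_M=g$ in $\Omega^c$ and $w_M\in\mathbb{V}^g$ whenever $w\in\mathbb{V}^g$ and $M\ge\|g\|_{L^\infty}$. Hence, once $u$ itself is known to be bounded, the constrained minimality above is equivalent to $I_s[u]\le I_s[w]$ for all $w\in\mathbb{V}^g$, i.e.\ to $u$ minimizing $I_s$ over $\mathbb{V}^g$ (with $u\in\mathbb{V}^g$ guaranteed by $I_s[u]<\infty$ and \Cref{Prop:domain-energy}). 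The boundedness of $u$ is automatic in both directions: if $E$ is locally $s$-minimal, the bounded minimal graph produced in \Cref{appendix:perimeter} (cf.\ \Cref{R:solutions}) serves as $u$; and if $u$ minimizes $I_s$ over $\mathbb{V}^g$, then $u_{\|g\|_{L^\infty}}$ is also a minimizer, while $I_s$ is strictly convex on $\mathbb{V}^g$ because $F_s''(\rho)=(1+\rho^2)^{-(d+1+2s)/2}>0$ (equality in the convexity inequality forces $v(x)-v(y)=w(x)-w(y)$ a.e.\ in $Q_\Omega$, and testing with $y\in\Omega^c$, where $v=w=g$, gives $v=w$), so by uniqueness $u=u_{\|g\|_{L^\infty}}$ and $\|u\|_{L^\infty(\Omega)}\le\|g\|_{L^\infty}$.

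The main obstacle is bookkeeping rather than analysis: one must be careful that the reduction to bounded subgraph competitors is applied \emph{before} invoking \Cref{prop:perimeter-energy} (which is only stated for subgraphs), that the additive constant is literally the same for $E$ and for every competitor (which is exactly the content of \Cref{prop:perimeter-energy}), and that the boundedness of $u$ is established without circularity in each of the two implications — the strict convexity remark is what makes the latter clean.
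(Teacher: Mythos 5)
Your proposal is correct and follows exactly the route the paper indicates: the paper itself gives no proof of this Corollary but points to \Cref{prop:perimeter-energy}, \Cref{Prop:domain-energy}, and Lombardini's thesis, and your argument (slab reduction $\Omega_M$ and subgraph rearrangement from \Cref{appendix:perimeter}, the constant $C$ from \Cref{prop:perimeter-energy} being competitor-independent, a truncation argument to pass from $L^\infty$-bounded to general $\mathbb{V}^g$ competitors, and strict convexity of $I_s$ to secure boundedness of the minimizer) is precisely the filling-in that the citation is standing for. The only cosmetic remark is that in the forward direction the boundedness of $u$ is what \eqref{E:minimal-priori} guarantees directly; you don't need to ``produce'' a bounded minimal graph, only to observe that the given $u$ must be bounded, but your argument clearly has this in mind.
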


The functional $I_s$ is strictly convex, because the weight $F_s$ appearing in its definition (cf. \eqref{E:def_Fs}) is strictly convex as well. Therefore, we straightforwardly deduce the next result.
\begin{Corollary}[uniqueness]
Under the same hypothesis as in Corollary \ref{Cor:characterization-NMS}, there exists a unique locally $s$-minimal set.
\end{Corollary}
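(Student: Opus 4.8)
The plan is to reduce uniqueness of the locally $s$-minimal set to uniqueness of the minimizer of $I_s$ in $\mathbb{V}^g$, and to derive the latter from strict convexity of the energy. Under the standing assumptions \eqref{E:assumptions}, the discussion preceding \eqref{E:Def-E} together with the results collected in \Cref{appendix:perimeter} (see also \cite[Proposition 4.2.8, Theorem 4.1.11]{Lombardini-thesis}) guarantees that there exists a locally $s$-minimal set in $\Omega' = \Omega \times \mR$ with exterior datum \eqref{E:Def-E0}, and that any such set is necessarily a subgraph of the form \eqref{E:Def-E}. Consequently \Cref{Cor:characterization-NMS} applies to every locally $s$-minimal set, and it suffices to prove that $I_s$ has at most one minimizer in $\mathbb{V}^g$ (existence of one is part of the input just quoted).

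First I would observe that $\mathbb{V}^g$ is convex: if $v_0, v_1 \in \mathbb{V}^g$ and $t \in [0,1]$, then $v_t := (1-t) v_0 + t v_1$ equals $g$ in $\Omega^c$, its restriction to $\Omega$ belongs to $W^{2s}_1(\Omega)$, and $|v_t|_{\mathbb{V}^g} \le (1-t) |v_0|_{\mathbb{V}^g} + t |v_1|_{\mathbb{V}^g} < \infty$, so $v_t \in \mathbb{V}^g$. Next I would record the strict convexity of $F_s$: from \eqref{E:def_Fs} one computes $F_s''(\rho) = (1+\rho^2)^{-(d+1+2s)/2} > 0$ for every $\rho \in \mR$, hence $F_s((1-t) a + t b) < (1-t) F_s(a) + t F_s(b)$ whenever $a \ne b$ and $t \in (0,1)$. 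Now take distinct $v_0, v_1 \in \mathbb{V}^g$ and set $w := v_1 - v_0$. Since $v_0 = v_1 = g$ in $\Omega^c$ while $v_0 \ne v_1$ in $\Omega$, the function $w$ vanishes in $\Omega^c$ and is nonzero on a subset of $\Omega$ of positive measure, so
\[
S := \{ (x,y) \in Q_\Omega \, : \, w(x) \ne w(y) \} \ \supseteq \ \{ x \in \Omega : w(x) \ne 0 \} \times \Omega^c
\]
has positive Lebesgue measure. For $(x,y) \in S$ one has $\frac{v_0(x)-v_0(y)}{|x-y|} \ne \frac{v_1(x)-v_1(y)}{|x-y|}$, so the corresponding value of the integrand defining $I_s$ is strictly convex in $t$ along $v_t$, whereas on $Q_\Omega \setminus S$ it is constant in $t$; multiplying by the positive kernel $|x-y|^{-(d+2s-1)}$ and integrating over $Q_\Omega$ — legitimate because $I_s[v_0], I_s[v_1] < \infty$ by \Cref{Prop:domain-energy} — yields $I_s[v_t] < (1-t) I_s[v_0] + t\, I_s[v_1]$ for every $t \in (0,1)$.

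Finally, if $u_0, u_1 \in \mathbb{V}^g$ both minimize $I_s$, with common minimal value $m < \infty$, and $u_0 \ne u_1$ in $\Omega$, then choosing $t = \tfrac12$ above gives $I_s[u_{1/2}] < m$ with $u_{1/2} \in \mathbb{V}^g$, contradicting minimality; hence $u_0 = u_1$ a.e.\ in $\Omega$, and by \Cref{Cor:characterization-NMS} the corresponding locally $s$-minimal subgraphs \eqref{E:Def-E} agree up to a null set, which is the claimed uniqueness. The part I expect to demand the most care is not the convexity argument, which is elementary, but the structural input that \emph{every} locally $s$-minimal set with subgraph exterior datum is itself a subgraph — so that \Cref{Cor:characterization-NMS} exhausts all competitors — together with the existence of at least one minimizer; both are supplied by the theory developed in \Cref{appendix:perimeter}.
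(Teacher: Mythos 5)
Your proof is correct and follows exactly the same route as the paper: strict convexity of $F_s$ (checked via $F_s'' > 0$) implies strict convexity of $I_s$, which gives uniqueness of the minimizer in $\mathbb{V}^g$, and \Cref{Cor:characterization-NMS} then transfers this to uniqueness of the locally $s$-minimal set. The paper states this in one sentence ("we straightforwardly deduce"), while you supply the details — in particular the measure-theoretic point that distinct $v_0, v_1 \in \mathbb{V}^g$ produce strict inequality on a positive-measure subset of $Q_\Omega$, and the structural fact that all competitors are subgraphs — but there is no difference in strategy.
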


We conclude this section with a result about the regularity of the minimizers of $I_s$.
In spite of being prone to be discontinuous across the boundary, minimal graphs are smooth in the interior of the domain. The following theorem is stated in \cite[Theorem 1.1]{CabreCozzi2017gradient}, where an estimate for the gradient of the minimal function is derived. Once such an estimate is obtained, the claim follows by the arguments from \cite{Barrios2014bootstrap} and \cite{Figalli2017regularity}.

\begin{Theorem}[interior smoothness of nonlocal minimal graphs] \label{thm:smoothness}
Assume $E \subset \mathbb{R}^{d+1}$ is an $s$-minimal surface in $\Omega' = \Omega \times \mathbb{R}$, given by the subgraph of a measurable function $u$ that is bounded in an open set $\Lambda \supset \Omega$.
Then, $u \in C^\infty (\Omega)$.
\end{Theorem}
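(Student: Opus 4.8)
The statement to prove is the interior smoothness theorem (Theorem~\ref{thm:smoothness}): if $E\subset\mathbb{R}^{d+1}$ is an $s$-minimal surface in $\Omega'=\Omega\times\mathbb{R}$ given by the subgraph of a function $u$ bounded in a neighborhood $\Lambda\supset\Omega$, then $u\in C^\infty(\Omega)$.

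\medskip

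\textbf{Proof strategy.} The plan is to proceed in three stages, following the chain of references cited in the statement. First, I would invoke the gradient estimate of Cabr\'e and Cozzi \cite[Theorem~1.1]{CabreCozzi2017gradient}: for $s$-minimal graphs that are subgraphs of a function bounded in $\Lambda$, one obtains an interior bound $\|\nabla u\|_{L^\infty(\Omega'')}\le C$ on any compactly contained subdomain $\Omega''\Subset\Omega$, with $C$ depending on $d,s,\dist(\Omega'',\partial\Omega)$ and $\|u\|_{L^\infty(\Lambda)}$. This is the crucial input, since it upgrades the merely measurable/bounded hypothesis to a Lipschitz bound, which in particular means that near any interior point the nonlocal mean curvature operator becomes a genuinely uniformly elliptic (non-degenerate) nonlocal operator: once $|\nabla u|\le C$, the quantity $\widetilde G_s\big((u(x)-u(y))/|x-y|\big)$ stays bounded away from $0$ on the relevant range, so the Euler--Lagrange equation \eqref{variation-s} is a uniformly elliptic integro-differential equation of order $s+1/2<1$.

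\medskip

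Second, with the Lipschitz bound in hand, I would apply a bootstrap argument. The Euler--Lagrange equation for $I_s$ says that $u$ has vanishing nonlocal mean curvature of order $s$; written in graph form, $u$ solves $\mathcal{L}_u u = 0$ where $\mathcal{L}_u$ is a linear nonlocal operator of order $s+\tfrac12$ with kernel depending on the first-order difference quotients of $u$. Since $u$ is Lipschitz, the kernel is bounded and bounded below on the appropriate set, so De Giorgi--Nash--Moser-type estimates for nonlocal equations (as developed for the fractional minimal surface equation) give $u\in C^{1,\alpha}_{\mathrm{loc}}(\Omega)$; this is the content of the arguments of Barrios, Figalli and Valdinoci \cite{Barrios2014bootstrap}. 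Once $u\in C^{1,\alpha}_{\mathrm{loc}}$, the coefficients of $\mathcal{L}_u$ inherit $C^{\alpha}$ regularity, and Schauder-type estimates for linear nonlocal operators let one differentiate the equation and iterate, raising the regularity by (a fixed fraction related to) the order of the operator at each step, as in Figalli and Valdinoci \cite{Figalli2017regularity}. Iterating indefinitely yields $u\in C^\infty(\Omega)$.

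\medskip

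Third, I would assemble these pieces: fix an arbitrary point $x_0\in\Omega$ and a ball $B\Subset\Omega$ containing it; apply the Cabr\'e--Cozzi estimate on $B$ to get $u\in W^{1,\infty}(B')$ for $B'\Subset B$; then apply the Barrios-type $C^{1,\alpha}$ regularity and the Figalli--Valdinoci bootstrap on a slightly smaller ball to conclude $u\in C^\infty$ near $x_0$. Since $x_0$ is arbitrary, $u\in C^\infty(\Omega)$. Strictly speaking the bound $\|u\|_{L^\infty(\Lambda)}<\infty$ is only needed to feed the gradient estimate; after that step everything is interior.

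\medskip

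\textbf{Main obstacle.} The genuine difficulty — and the reason the statement is quoted rather than proved from scratch — is the Cabr\'e--Cozzi interior gradient estimate: its proof requires delicate geometric/measure-theoretic arguments about $s$-minimal sets (a nonlocal analogue of the classical Bombieri--De Giorgi--Miranda estimate) and is far beyond the scope of this paper. The subsequent bootstrap, while technical, is relatively standard regularity theory for uniformly elliptic nonlocal equations once non-degeneracy is available; the only mild care needed is to check that the nonlocal nature of the kernel (long-range interactions from the region where $u$ may be large) does not spoil the local estimates, which is handled by the usual decomposition of the kernel into a near part (genuinely elliptic) and a far part (bounded, lower-order). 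Accordingly, I would not reprove these results here but cite \cite{CabreCozzi2017gradient,Barrios2014bootstrap,Figalli2017regularity} as above.
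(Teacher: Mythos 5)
Your proposal follows exactly the same route the paper takes: the paper does not prove this theorem itself but quotes the interior gradient estimate of Cabr\'e--Cozzi \cite[Theorem~1.1]{CabreCozzi2017gradient} and then notes that smoothness follows by the bootstrap arguments of \cite{Barrios2014bootstrap} and \cite{Figalli2017regularity}, which is precisely the three-stage chain you outline. Your additional explanation of why the Lipschitz bound restores uniform ellipticity and enables the $C^{1,\alpha}\to C^\infty$ iteration is a correct and useful gloss on that citation chain, not a deviation from it.
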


\subsection{Weak formulation}
In order to define the proper variational setting to study the nonlocal minimal graph problem, we introduce the function $G_s \colon \mR \to \mR$,
\begin{equation} \label{E:DEF-Gs}
G_s(\rho) = \int_0^\rho (1+r^2)^{-(d+1+2s)/2} dr = F'_s(\rho).
\end{equation}
We recall that $s \in (0,1/2)$. Clearly, $G_s$ is an odd and uniformly bounded function:
\begin{equation} \label{E:bound-Gs}
| G_s (\rho) | \le K := \int_{0}^\infty (1+r^2)^{-(d+1+2s)/2} dr = \frac{\Gamma\l( \frac{d+2s}{2}\r) \sqrt{\pi}}{2 \Gamma \l(\frac{d+1+2s}{2}\r)}.
\end{equation}
The constant $K$ has already appeared in the proof of \Cref{lemma:W2s}, under the label $C_3(d,s)$. The last equality above follows from the substitution $t = (1+r^2)^{-1}$ and the basic relation between the beta and gamma functions, $B(x,y) =\frac{\Gamma(x)\Gamma(y)}{\Gamma(x+y)}$. 

Given a function $u \in \mathbb{V}^g$, we take the bilinear form $a_u \colon \mathbb{V}^g \times \mathbb{V}^0 \to \mathbb{R}$,
\begin{equation} \label{E:def-a}
a_u(w,v) := \iint_{Q_{\Omega}} \Gts\l(\frac{u(x)-u(y)}{|x-y|}\r) \frac{(w(x)-w(y))(v(x)-v(y))}{|x-y|^{d+1+2s}}dx dy, 
\end{equation}
where $\Gts(\rho) = \int_0^1 (1+ \rho^2 r^2)^{-(d+1+2s)/2} dr$ and hence it satisfies $\rho \Gts(\rho) = G_s(\rho)$. 

To obtain a weak formulation of our problem, we compute the first variation of \eqref{E:NMS-Energy-Graph}, that yields
\[
\delta I_s [u] (v) = a_u(u,v) \mbox{ for all } \quad v \in \mathbb{V}^0.
\]
Thus, we seek a function $u \in \mathbb{V}^g$ such that  
\begin{equation}\label{E:WeakForm-NMS-Graph}
a_u(u,v) = 0 \quad \mbox{ for all }  v \in \mathbb{V}^0.
\end{equation}

Another approach --at least formal-- to derive problem \eqref{E:WeakForm-NMS-Graph} is to write it as the weak form of a suitable Euler-Lagrange equation. More precisely, assuming that the set $E$ is the subgraph of a function $u$, this can be written as the following nonlocal and nonlinear equation \cite{Barrios2014bootstrap}
\begin{equation}\label{E:EL-eq-Graph}
H_s[E](x) = \PV \int_{\mRd} G_s\l(\frac{u(x)-u(y)}{|x-y|}\r) \frac{1}{|x-y|^{d+2s}} \;dy = 0,
\end{equation}
in a viscosity sense, for every $x \in \Omega$.
With some abuse of notation, we let $H_s[u]$ represent $H_s[E]$ when $E$ is the subgraph of $u$. Therefore, $u$ solves the Dirichlet problem
\begin{equation}\label{E:PDE-NMS}
\l\{\begin{array}{rl}
H_s[u](x) = 0 \ \quad & \quad x \in \Omega, \\
u(x)  = g(x) & \quad x \in \mRd \setminus \Omega.
\end{array}\r.
\end{equation}
In this regard, the weak formulation of \eqref{E:PDE-NMS} is set by multiplying it by a test function, integrating and taking advantage of the fact that $G_s$ is an odd function. This corresponds to \eqref{E:WeakForm-NMS-Graph}. 

We finally point out that \eqref{E:WeakForm-NMS-Graph} can be interpreted as a fractional diffusion problem of order $s+1/2$ with weights depending on the solution $u$ and fixed nonhomogeneous boundary data; this is in agreement with the local case \eqref{eq:classical-MS}. Like for the classical minimal graph problem, the nonlinearity degenerates wherever the Lipschitz modulus of continuity of $u$ blows up. We expect this to be the case as $\dist(x,\partial\Omega) \to 0$, as this has been shown to be the generic behavior in one-dimensional problems \cite{DipiSavinVald19}.

\section{Numerical Method} \label{sec:numerical-method}

This section introduces the framework for the discrete nonlocal minimal graph problems under consideration. We set the notation regarding discrete spaces and analyze their approximation properties by resorting to a quasi-interpolation operator of Cl\'ement type. We include a brief discussion on the solution of the resulting nonlinear discrete problems.

\subsection{Finite element discretization} \label{ss:FE_discretization}
As discussed in \Cref{R:assumptions}, in this work we assume that $g$ is a function with bounded support. Concretely, we assume that 
\begin{equation} \label{E:support}
\mbox{supp}(g) \subset \Lambda \mbox{ for some bounded set } \Lambda.
\end{equation}
Approximations for unboundedly supported data are discussed in a forthcoming paper by the authors \cite{BoLiNo19computational}. Without loss of generality, we may assume that $\Lambda = B_R(0)$ is a ball of radius $R$ centered at the origin.

We consider a family $\{\Th \}_{h>0}$ of conforming and simplicial meshes of $\Lambda$, that we additionally require to exactly mesh $\Omega$.
Moreover, we assume this family to be shape-regular, namely:
\[
  \sigma = \sup_{h>0} \max_{T \in \Th} \frac{h_T}{\rho_T} <\infty,
\]
where $h_T = \mbox{diam}(T)$ and $\rho_T $ is the diameter of the largest ball contained in $T$. As usual, the subindex $h$ denotes the mesh size, $h = \max_{T \in \Th} h_T$.
The set of vertices of $\Th$ will be denoted by $\mathcal{N}_h$, and $\varphi_i$ will denote the standard piecewise linear Lagrangian basis function associated to the node $\x_i \in \mathcal{N}_h$. In this work we assume that the elements are closed sets. Thus, the star or first ring of an element $T \in \Th$ is given by
\[
  S^1_T = \bigcup \left\{ T' \in \Th \colon \oT \cap \oTp \neq \emptyset \right\}.
\]
We also introduce the star or second ring of $S^1_T$,
\[
  S^2_T = \bigcup \left\{ T' \in \Th \colon S^1_T \cap \oTp \neq \emptyset \right\},
\]
and the star of the node $\x_i \in \mathcal{N}_h$, $S_i = \mbox{supp}(\varphi_i)$.
We split the mesh nodes into two disjoint sets, consisting of either vertices in $\Omega$ and in $\Omega^c$,
\[
\mathcal{N}_h^\circ = \left\{ \x_i \colon \x_i \in \Omega \right\}, \qquad  \mathcal{N}_h^c = \left\{ \x_i \colon \x_i \in \Omega^c \right\}.
\]
We emphasize that, because $\Omega$ is an open set, nodes on $\pO$ belong to $\mathcal{N}_h^c$.

The discrete spaces we consider consist of continuous piecewise linear functions in $\Lambda$. Indeed, we set
\[
\mathbb{V}_h =  \{ v \in C(\Lambda) \colon v|_T \in \mathcal{P}_1 \; \forall T \in \Th \}.
\]
For this work, we make use of certain Cl\'ement-type interpolation operators on $\mathbb{V}_h$. To account for boundary data, given an integrable function $g \colon \Lambda \setminus \Omega \to \mR$, we define 
\[
\mathbb{V}_h^g = \{ v \in \mathbb{V}_h \colon \ v|_{\Lambda \setminus \Omega} = \Cl^c g\}.
\]
Here, $\Cl^c$ denotes the {\em exterior} Cl\'ement interpolation operator in $\Omega^c$, defined as
\[
\Cl^c g := \sum_{\x_i \in \Nhc} (\Cl^{\x_i} g) (\x_i) \; \varphi_i,
\]
where $\Cl^{\x_i} g$ is the $L^2$-projection of  $g\big|_{S_i \cap \Omega^c}$ onto $\mathcal{P}_1(S_i \cap \Omega^c)$. Thus, $\Cl^c g (\x_i)$ coincides with the standard Cl\'ement interpolation of $g$ on $\x_i$ for all nodes $\x_i$ such that $S_i \subset \mRd \setminus \overline{\Omega}$. On the other hand, for nodes on the boundary of $\Omega$, $\Cl^c$ only averages over the elements in $S_i$ that lie in $\Omega^c$. Although $\Cl^c g$ only takes into account values of $g$ in $\Omega^c$, the support of $\Cl^c g$ is not contained in $\Omega^c$, because $\varphi_i$ attains nonzero values in $\Omega$ for $x_i \in \pO$.

Using the same convention as before, in case $g$ is the zero function, we write the corresponding space as $\mathbb{V}_h^0$. Also, we define the {\em interior} Cl\'ement interpolation operator $\Cl^\circ \colon L^1(\Omega) \to \mathbb{V}^0_h$,
\[
\Cl^\circ v := \sum_{\x_i \in \mathcal{N}_h^\circ} (\Cl^{\x_i} v) (\x_i) \; \varphi_i,
\]
where $\Cl^{\x_i} v$ is the $L^2$-projection of  $v \big|_{\Omega}$ onto $\mathcal{P}_1(S_i)$.

\begin{Remark}[discrete functions are continuous]
Even though nonlocal minimal surfaces can develop discontinuities across $\partial\Omega$
--recall the sticky behavior commented in \Cref{R:stickiness}--  the discrete spaces we consider consist of continuous functions. This does not preclude the convergence of the numerical scheme we propose in `trace blind' fractional Sobolev spaces. 
Furthermore, the strong imposition of the Dirichlet data simplifies both the method and its analysis. The use of discrete spaces that capture discontinuities across the boundary of the domain is subject of ongoing work by the authors.
\end{Remark}

With the notation introduced above, the discrete counterpart of \eqref{E:WeakForm-NMS-Graph} reads: find $u_h \in \mathbb{V}^g_h$ such that
\begin{equation}\label{E:WeakForm-discrete}
a_{u_h}(u_h, v_h) = 0 \quad \mbox{for all } v_h \in \mathbb{V}^0_h. 
\end{equation}

Due to our assumption \eqref{E:assumptions} on the datum $g$, it follows immediately that $u_h$ is a solution of \eqref{E:WeakForm-discrete} if and only if $u_h$ minimizes the  strictly convex energy $I_s[u_h]$ over the discrete space $\mathbb{V}_h^g$. This leads to the existence and uniqueness of solutions to the discrete problem \eqref{E:WeakForm-discrete}.

\subsection{Localization} An obvious difficulty when trying to prove interpolation estimates in 
fractional Sobolev spaces is that their seminorms are non-additive with respect to disjoint domain partitions. Here we state a localization result, proved by Faermann \cite{Faermann2,Faermann} in the case $p=2$. For brevity, since the proof for $p \neq 2$ follows by the same arguments as in those references, we omit it.

\begin{Proposition}[localization of fractional-order seminorms]
\label{prop:faermann}
Let $s \in (0,1)$, $p \in [1,\infty)$, and $\Omega$ be a bounded Lipschitz domain. Let $\Th$ denote a mesh as above. Then, for any $v \in W^s_p(\Omega)$ there holds
\begin{equation}
\label{E:faermann}
  |v|_{W^s_p(\Omega)} \leq \left[\sum_{T \colon T \subset \overline\Omega} \iint_{T \times (S^1_T \cap \Omega)} \frac{|v (x) - v (y)|^p}{|x-y|^{d+sp}} dy  dx 
    + C(\sigma) \, \frac{2^p \w_{d-1}}{s p h_T^{s p}} \, \| v \|^p_{L^p(T)} \right]^{1/p}.
\end{equation}
Above, $\w_{d-1}$ denotes the measure of the $(d-1)$-dimensional unit sphere.
\end{Proposition}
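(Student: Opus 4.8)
The plan is to prove the localization estimate \eqref{E:faermann} by decomposing the full seminorm over $\Omega\times\Omega$ into a ``near'' part, over pairs of points lying in a common patch, and a ``far'' part, over pairs of points separated by at least one element. Concretely, for each element $T\subset\overline\Omega$ I would write
\[
|v|_{W^s_p(\Omega)}^p = \sum_{T} \int_T \left( \int_{S^1_T\cap\Omega} \frac{|v(x)-v(y)|^p}{|x-y|^{d+sp}}\,dy + \int_{\Omega\setminus S^1_T} \frac{|v(x)-v(y)|^p}{|x-y|^{d+sp}}\,dy \right) dx.
\]
The first double integral is exactly the first term on the right-hand side of \eqref{E:faermann}, so the entire task reduces to bounding the second (the ``far'' contribution) by $C(\sigma)\,\tfrac{2^p\w_{d-1}}{sp\,h_T^{sp}}\|v\|_{L^p(T)}^p$. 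This is where the work lies.

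For the far term, the key geometric observation is that if $y\in\Omega\setminus S^1_T$ and $x\in T$, then $|x-y|\ge \dist(x,\partial S^1_T)$, and more usefully $|x-y|$ is bounded below by a fixed fraction of the distance from $x$ to the complement of $S^1_T$, which by shape-regularity is comparable to $h_T$ for $x$ in the interior portion of $T$ but may be small near $\partial T$. The standard trick (following Faermann) is to use the crude bound $|v(x)-v(y)|^p \le 2^{p-1}(|v(x)|^p + |v(y)|^p)$ and integrate. The $|v(y)|^p$ piece, after summing over $T$, contributes a bounded multiple of $\|v\|_{L^p(\Omega)}^p$ against a kernel that is integrable away from the diagonal — but one must be careful that this is controlled element-by-element by the $\|v\|_{L^p(T)}^p$ terms; this is handled by a counting argument using shape-regularity (each element belongs to boundedly many patches $S^1_{T'}$, with the bound depending only on $\sigma$). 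The $|v(x)|^p$ piece is the cleaner one: for fixed $x\in T$,
\[
\int_{\Omega\setminus S^1_T} \frac{dy}{|x-y|^{d+sp}} \le \int_{|z|\ge \delta(x)} \frac{dz}{|z|^{d+sp}} = \frac{\w_{d-1}}{sp\,\delta(x)^{sp}},
\]
where $\delta(x)=\dist(x,\mRd\setminus S^1_T)$; integrating $\delta(x)^{-sp}$ over $T$ against $|v(x)|^p$ and using that $\delta(x)\gtrsim_\sigma h_T$ on a definite portion of $T$, together with a Hardy-type control near $\partial T$, yields the claimed $h_T^{-sp}\|v\|_{L^p(T)}^p$ bound with the explicit constant $2^p\w_{d-1}/(sp)$ and a shape-regularity factor $C(\sigma)$.

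The main obstacle is the careful bookkeeping of the constant: one wants the factor $\tfrac{2^p\w_{d-1}}{sp\,h_T^{sp}}$ to come out exactly (it is this explicit dependence that matters for later interpolation estimates, especially the behavior as $s\to\tfrac12$ or as $p$ varies), while absorbing all mesh-dependent multiplicities into $C(\sigma)$. The delicate point is that $\delta(x)$ degenerates as $x\to\partial T$, so a naive bound $\delta(x)\ge c_\sigma h_T$ fails pointwise; instead one splits $T$ into the part where $\delta(x)\ge c_\sigma h_T$ (handled directly) and a boundary layer where one must either use a finer covering of $\Omega\setminus S^1_T$ by annuli centered appropriately, or re-route those pairs $(x,y)$ into the ``near'' double integral over $T\times(S^1_T\cap\Omega)$ of a neighboring element. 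Since the paper explicitly defers this to \cite{Faermann2,Faermann} for $p=2$ and remarks that the general $p$ case ``follows by the same arguments,'' I would present the decomposition and the two kernel estimates above, note the shape-regularity counting argument, and cite those references for the remaining boundary-layer technicalities rather than reproducing them in full.
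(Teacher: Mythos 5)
Since the paper contains no proof of this proposition and simply cites Faermann \cite{Faermann2,Faermann}, your proposal is doing strictly more work than the text: you correctly identify the near/far decomposition that underlies Faermann's argument, the crude $2^{p-1}$-convexity bound, and the exact polar-coordinates computation $\int_{|z|\ge\delta}|z|^{-d-sp}\,dz = \w_{d-1}/(sp\,\delta^{sp})$ that produces the advertised constant. That is the right strategy and the right source of $2^p\w_{d-1}/(sp)$.

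There is, however, one concrete imprecision that creates an apparent difficulty where none exists, and then invites unnecessary machinery to patch it. You take $\delta(x)=\dist(x,\mRd\setminus S^1_T)$ and then worry, correctly, that this quantity degenerates as $x$ approaches $\partial T\cap\partial\Omega$, and propose a Hardy-type boundary-layer argument to compensate. But the integral you are bounding runs over $\Omega\setminus S^1_T$, not $\mRd\setminus S^1_T$, so the natural radius is $\widetilde\delta(x):=\dist(x,\Omega\setminus S^1_T)\ge\delta(x)$. For a conforming, shape-regular mesh that exactly triangulates the Lipschitz domain $\Omega$, one has $\widetilde\delta(x)\ge c(\sigma)\,h_T$ uniformly for $x\in T\subset\overline\Omega$: any $y\in\Omega$ with $|x-y|<c(\sigma)h_T$ necessarily lies in an element touching $T$, hence in $S^1_T$. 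With $\widetilde\delta$ in place of $\delta$, the bound $\int_T\widetilde\delta(x)^{-sp}|v(x)|^p\,dx\le (c(\sigma)h_T)^{-sp}\|v\|_{L^p(T)}^p$ is immediate and no boundary layer or re-routing into neighboring patches is required; the only thing deferred to Faermann is the shape-regularity lemma just stated. Two smaller remarks: the $|v(y)|^p$ contribution does not need a patch-multiplicity counting argument — after Fubini and the relabeling $(x,y)\mapsto(y,x)$, and using the symmetry of the relation $T(x)\cap T(y)\neq\emptyset$, it reproduces the $|v(x)|^p$ term verbatim, which is how the $2^{p-1}$ doubles to $2^p$; and the opening display should be an inequality or should carry the $\cap\Omega$ consistently, since strictly speaking you drop nothing only because $T\subset S^1_T$.
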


This localization of fractional-order seminorms is instrumental for our error analysis. It implies that, in order to prove approximability estimates in $W^s_p(\Omega)$, it suffices to produce {\em local} estimates in patches of the form $T\times S^1_T$ and scaled local $L^p(T)$ estimates  for every $T \in \Th$.

\subsection{Interpolation operator}
Here we define a quasi-interpolation operator that plays an important role in the analysis of the discrete scheme proposed in this paper. Such an operator combines the two Cl\'ement-type interpolation operators introduced in the previous subsection.  More precisely, we set $\interp \colon L^1(\mRd) \to \mathbb{V}_h^g$,
\begin{equation} \label{E:interpolation}
\interp v = \Cl^\circ \l(v \big|_\Omega \r)  + \Cl^c g .
\end{equation}

Using standard arguments for Cl\'ement interpolation, we obtain local approximation estimates in the interior of $\Omega$.

\begin{Proposition}[local interpolation error]
\label{prop:local_interpolation_estimate}
Let $s \in (0,1)$, $p \ge 1$, $s < t \le 2$. Then, for all $T \in \Th$ it holds
\[
\| v - \interp v \|_{L^p(T)} \lesssim h^{t} |v|_{W^t_p(S^1_T)},
\]
and
\[
\l(\iint_{T \times S_T^1} \frac{|(v - \interp v (x)) - (v - \interp v (y))|^p}{|x - y|^{d + sp}} dy dx \r)^\frac1p \lesssim h^{t-s} |v|_{W^t_p(S^2_T)} . 
\]
\end{Proposition}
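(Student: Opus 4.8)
The plan is to prove the two estimates separately, in both cases reducing to standard Cl\'ement-type bounds on a fixed reference patch via scaling. Since $\interp v = \Cl^\circ(v|_\Omega) + \Cl^c g$ and $v = v|_\Omega$ on $\Omega$ while the exterior Cl\'ement operator reproduces polynomials locally, the analysis splits according to whether a star $S_T^1$ lies entirely in $\overline\Omega$, entirely in $\Omega^c$, or straddles $\partial\Omega$; but in all cases $\interp v$ agrees locally with a Cl\'ement-type interpolant that uses $L^2$-projections onto $\mathcal P_1$ on stars, so the key structural facts are (i) $\interp$ is linear and locally bounded on $L^p$, and (ii) $\interp$ reproduces globally affine functions on $S_T^1$ (when $S_T^1\subset\overline\Omega$) — or at least reproduces them up to contributions that the interior operator on $\Omega$ also reproduces. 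I would state at the outset that it suffices to treat $T$ with $S_T^1\subset\overline\Omega$, the generic interior case; the boundary and exterior cases are either identical or easier because fewer nodal contributions are active. I would also record the elementary stability bound $\|\interp v\|_{L^p(T)}\lesssim\|v\|_{L^p(S_T^1)}$, which follows from the $L^2$-stability of each local projection $\Cl^{\x_i}$, the finite overlap of the stars (shape-regularity), and inverse estimates relating $L^2$ and $L^p$ norms on quasi-uniform local patches of bounded cardinality.

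For the first estimate, I would argue by the classical Bramble--Hilbert / Dupont--Scott device. Fix $T$ and let $q\in\mathcal P_1$ be arbitrary; since $\interp q = q$ on $T$ (for $S_T^1\subset\overline\Omega$), write $v-\interp v = (v-q) - \interp(v-q)$ on $T$, so that $\|v-\interp v\|_{L^p(T)}\le \|v-q\|_{L^p(T)} + \|\interp(v-q)\|_{L^p(T)}\lesssim \|v-q\|_{L^p(S_T^1)}$ by the local stability bound. Then take the infimum over $q\in\mathcal P_1$ and invoke the polynomial approximation estimate $\inf_{q\in\mathcal P_1}\|w-q\|_{L^p(S_T^1)}\lesssim h^t|w|_{W^t_p(S_T^1)}$ for $t\le 2$ on the star $S_T^1$ — this is a standard consequence of the Bramble--Hilbert lemma on a scaled, shape-regular patch (for fractional $t$ one uses the fractional version of Dupont--Scott, e.g.\ via interpolation between integer-order spaces or a direct estimate). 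The scaling in $h$ is tracked by the usual change of variables to a reference configuration of unit diameter, using shape-regularity to control the constants uniformly in $T$ and $h$.

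For the second estimate, I would proceed similarly but work with the Gagliardo--type double integral over $T\times S_T^1$. Again subtracting an arbitrary $q\in\mathcal P_1$ and using that $\interp$ reproduces $q$, the seminorm-type quantity on the left is bounded by the same quantity applied to $w:=v-q$ plus the same quantity applied to $\interp w$; the former is controlled by $|w|_{W^s_p(T\cup S_T^1)}\le|w|_{W^s_p(S_T^1)}$ (enlarging the domain of integration) and, since $s<t$, by $h^{t-s}|v|_{W^t_p(S_T^1)}$ after subtracting the best polynomial and applying the fractional Bramble--Hilbert estimate together with the embedding $W^t_p\hookrightarrow W^s_p$ with the correct power of $h$. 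For the term involving $\interp w$, I would bound its $W^s_p$-seminorm over the patch by an inverse estimate, $|\interp w|_{W^s_p(T\cup S_T^1)}\lesssim h^{-s}\|\interp w\|_{L^p(S_T^1)}\lesssim h^{-s}\|w\|_{L^p(S_T^2)}$, using $L^p$-stability of $\interp$ on the larger star $S_T^2$ (needed because $\interp$ on $T\cup S_T^1$ sees nodes in $S_T^2$); then $\|w\|_{L^p(S_T^2)}\lesssim h^t|v|_{W^t_p(S_T^2)}$ by polynomial approximation on $S_T^2$, giving the claimed $h^{t-s}$ rate over $S_T^2$. Collecting the two contributions yields the bound with $S_T^2$ on the right-hand side.

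The main obstacle I anticipate is not any single estimate but the bookkeeping near $\partial\Omega$: there $\interp v$ is the sum of the interior interpolant of $v|_\Omega$ and the exterior interpolant of $g$, and polynomial reproduction may fail for the combined operator on a star that meets both $\Omega$ and $\Omega^c$, because the two operators average over different subsets ($S_i$ versus $S_i\cap\Omega^c$). One must therefore check that on a patch $T\subset\overline\Omega$ the relevant nodes $\x_i$ either lie in $\Nhi$ (so $\Cl^\circ$ uses the full star and reproduces affine functions) or lie on $\pO$ but contribute through $\Cl^c g$; since the local estimates are stated for $T\subset\overline\Omega$ and $v$ is fixed with $\interp v$ already incorporating $g$, the cleanest route is to observe that $\interp v$ restricted to $\overline\Omega$ equals $\Cl^\circ(v|_\Omega)$ plus a function supported in a boundary layer whose $L^p$ and seminorm contributions are controlled by $\|g\|$-type quantities that do not affect the stated bounds — or, more simply, to note that $v$ and $g$ agree with a common affine function on $S_T^1$ whenever we subtract the best polynomial approximation of $v$ there, so the reproduction argument still closes. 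Making this precise while keeping constants uniform in $h$ and $\sigma$ is the delicate point; everything else is routine scaling and Bramble--Hilbert.
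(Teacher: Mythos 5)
The paper gives no proof of this proposition beyond the one-line remark ``Using standard arguments for Cl\'ement interpolation,'' so your proposal is essentially the proof the authors had in mind. Your strategy --- local $L^p$-stability of the averaging operator, reproduction of $\mathcal{P}_1$, a Bramble--Hilbert argument over scaled shape-regular patches, and an inverse estimate from $L^p$ to $W^s_p$ for the second bound --- is the standard Cl\'ement machinery and is correct. One small point worth tightening: for the second estimate you should pick the competitor polynomial $q$ as the (near-)best $\mathcal P_1$ approximation on $S_T^2$ rather than $S_T^1$, so that both $\|v-q\|_{L^p(S_T^2)}\lesssim h^t|v|_{W^t_p(S_T^2)}$ and the fractional seminorm bound refer consistently to $S_T^2$, which is where the stability of $\interp$ on $T\cup S_T^1$ actually reaches.

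Your concern in the final paragraph about the combined operator $\Cl^\circ+\Cl^c$ failing to reproduce affines on patches that straddle $\pO$ is legitimate given how the proposition is worded, but it is moot in the way the result is actually used: in the proof of Proposition 3.5 the estimate is invoked only for elements with $S_T^2\subset\Omega$, for which $\Cl^c g$ vanishes identically on $S_T^1$ (it is supported in the boundary layer $\bigcup_{\x_i\in\Nhc}S_i$), so $\interp v$ coincides there with $\Cl^\circ(v|_\Omega)$ and your interior reproduction argument closes without any bookkeeping near $\pO$. If you wanted the proposition to hold verbatim for all $T\in\Th$, you would additionally need $v=g$ on $\Omega^c$ (so that the best affine competitor is simultaneously a good approximation for $\Cl^\circ$ and $\Cl^c$) together with a nondegeneracy assumption on $S_i\cap\Omega^c$ at nodes $\x_i\in\pO$; your observation that $v$ and $g$ can be compared with a common affine function on $S_T^1$ is exactly the right idea to close that case.
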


From \Cref{Cor:characterization-NMS} and \Cref{thm:smoothness} we know that, under suitable assumptions, minimal graphs are $W^{2s}_1$-functions that are locally smooth in $\Omega$. These conditions are sufficient to prove the convergence of the interpolation operator $\interp$.

\begin{Proposition}[interpolation error] \label{prop:interpolation_estimate}
	Let $s \in (0,1)$, and $p \ge 1$ be such that $sp < 1$. Assume that $\Omega$ and $g$ satisfy \eqref{E:assumptions} and \eqref{E:support}. Then, for all 
	$v \colon \mRd \to \mR$	satisfying $v \big|_\Omega \in W^s_p(\Omega)$ and $v = g$ in $\Omega^c$, 
\[
\iint_{Q_{\Omega}}  \frac{\l| (\interp v -v)(x) - (\interp v -v)(y) \r|^p}{|x-y|^{d+sp}} \; dxdy \to 0 \ \textrm{ as } h \to 0.
\]
\end{Proposition}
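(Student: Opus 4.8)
The plan is to split the double integral over $Q_\Omega$ into three pieces according to the standard decomposition $Q_\Omega = (\Omega\times\Omega) \cup (\Omega\times\Omega^c) \cup (\Omega^c\times\Omega)$, the last two being symmetric, and to handle the interior piece by the localization result (Proposition~\ref{prop:faermann}) combined with the local interpolation estimates (Proposition~\ref{prop:local_interpolation_estimate}), and the interior-exterior piece by a direct estimate exploiting that $\interp v - v = \Cl^\circ(v|_\Omega) - v|_\Omega$ vanishes (in the appropriate averaged sense) away from $\partial\Omega$ and that $g$ has bounded support in $\Lambda = B_R$.

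First I would write $w_h := \interp v - v$, so that $w_h|_\Omega = \Cl^\circ(v|_\Omega) - v|_\Omega$ and $w_h|_{\Omega^c} = \Cl^c g - g$. For the interior term $\iint_{\Omega\times\Omega} |w_h(x)-w_h(y)|^p|x-y|^{-d-sp}\,dxdy = |w_h|_{W^s_p(\Omega)}^p$, I apply Proposition~\ref{prop:faermann} to get
\[
|w_h|^p_{W^s_p(\Omega)} \le \sum_{T\subset\overline\Omega}\left[ \iint_{T\times(S^1_T\cap\Omega)} \frac{|w_h(x)-w_h(y)|^p}{|x-y|^{d+sp}}\,dy\,dx + \frac{C\,2^p\w_{d-1}}{sp\,h_T^{sp}}\|w_h\|^p_{L^p(T)}\right].
\]
The first summand is bounded by Proposition~\ref{prop:local_interpolation_estimate} (second estimate, with $t=s$, since $v|_\Omega\in W^s_p(\Omega)$) by $h^{0}|v|^p_{W^s_p(S^2_T)}$ — wait, more carefully: one does not get a positive power of $h$ directly, so the argument must instead run through a density/uniform-integrability argument. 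The clean route is: for $\delta>0$ pick $v_\delta\in C^\infty(\overline\Omega)$ with $\|v|_\Omega - v_\delta\|_{W^s_p(\Omega)}<\delta$ (valid since $sp<1$ so smooth functions are dense and $\interp$ is stable), write $w_h = (\interp v - \interp v_\delta) + (\interp v_\delta - v_\delta) + (v_\delta - v|_\Omega)$ on $\Omega$; the first and third terms have $W^s_p(\Omega)$-seminorm $\lesssim\delta$ uniformly in $h$ by stability of $\Cl^\circ$ in $W^s_p$ (which follows from Proposition~\ref{prop:faermann} plus the $L^p$ and local-seminorm stability of Clément), and the middle term tends to $0$ as $h\to0$ since $v_\delta$ is smooth (apply Proposition~\ref{prop:local_interpolation_estimate} with $t=1$, summing the local bounds $h^{(1-s)p}|v_\delta|^p_{W^1_p(S^2_T)}$ and $h_T^{-sp}\|\interp v_\delta - v_\delta\|^p_{L^p(T)}\lesssim h_T^{(1-s)p}|v_\delta|^p_{W^1_p(S^1_T)}$, using finite overlap of the patches).

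For the mixed term $\iint_{\Omega\times\Omega^c}|w_h(x)-w_h(y)|^p|x-y|^{-d-sp}\,dxdy$, I split $\Omega^c = (B_{2R}\setminus\Omega)\cup B_{2R}^c$. On $\Omega\times B_{2R}^c$ one has $|x-y|\gtrsim 1+|y|$, and since $g$ and $\Cl^c g$ are supported in $\Lambda=B_R$, the integrand is $|w_h(x)|^p|x-y|^{-d-sp}$, which integrates to $\lesssim \|w_h\|^p_{L^p(\Omega)}\to0$. On the bounded annulus $\Omega\times(B_{2R}\setminus\Omega)$ I bound $|w_h(x)-w_h(y)|^p \lesssim |w_h(x)|^p + |w_h(y)|^p$; the $|w_h(x)|^p$ part gives $\int_\Omega |w_h(x)|^p \dist(x,\partial\Omega)^{-sp}\,dx$ (after integrating $y$ in polar coordinates as in the proof of Proposition~\ref{Prop:domain-energy}), which tends to $0$ by a Hardy-inequality argument together with $\|w_h\|_{W^s_p(\Omega)}\to0$ — or, more simply, by the same density splitting as above, since for smooth $v_\delta$ the Clément error $\interp v_\delta - v_\delta$ is supported in a $O(h)$-neighborhood of $\partial\Omega$ and its weighted $L^p$ norm is controlled. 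The $|w_h(y)|^p$ part over $y\in B_{2R}\setminus\Omega$ involves the exterior Clément error $\Cl^c g - g$; this is where one uses that $g\in L^\infty(\Omega^c)$ with bounded support, so $\|\Cl^c g - g\|_{L^p(B_{2R}\setminus\Omega)}$ is finite, and one argues by density of nice functions in $L^\infty$ against this weight, or observes that the weight $\int_\Omega |x-y|^{-d-sp}dx \lesssim \dist(y,\Omega)^{-sp}$ is integrable against the bounded function $|g-\Cl^c g|^p$ near $\partial\Omega$ only after a density reduction.

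\emph{The main obstacle} is the absence of any direct rate: since $v$ is only assumed to lie in $W^s_p(\Omega)$ (exactly the regularity for which $I_s$ is finite), one cannot extract a positive power of $h$, so the entire argument must be organized as a density argument — approximate $v$ by smooth functions, use \emph{uniform} (in $h$) stability of the quasi-interpolant $\interp$ in the relevant norms (which itself rests on Proposition~\ref{prop:faermann} and the stability of the two Clément operators, and must be proved carefully for the combined operator with its exterior component), and only then invoke the quantitative Proposition~\ref{prop:local_interpolation_estimate} on the smooth approximant. The delicate bookkeeping is making sure all the stability constants are independent of $h$ and that the exterior datum $g$ (merely $L^\infty$ with bounded support, not in any $W^s_p$) is handled consistently — this is exactly why the bounded-support assumption \eqref{E:support} and the annulus/far-field split are needed.
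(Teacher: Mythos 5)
Your overall architecture is close to the paper's: split $Q_\Omega$ into the interior part $\Omega\times\Omega$ and the mixed part $\Omega\times\Omega^c$ (plus its mirror), handle the interior via the Faermann localization plus the local Cl\'ement estimates, use a density-in-$W^s_p(\Omega)$ argument together with stability of the quasi-interpolant to compensate for the absence of extra regularity of $v$, and exploit $\Vert\Cl^c g\Vert_{L^\infty}\lesssim\Vert g\Vert_{L^\infty}$ and bounded support of $g$ for the mixed part. That much agrees in spirit with the paper's proof.

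There is, however, a genuine gap right at the start. You write $w_h|_\Omega=\Cl^\circ(v|_\Omega)-v|_\Omega$, but this is false: by definition $\interp v=\Cl^\circ(v|_\Omega)+\Cl^c g$, and the paper explicitly emphasizes that the support of $\Cl^c g$ is \emph{not} contained in $\Omega^c$, because the hat functions $\varphi_i$ attached to nodes $\x_i\in\partial\Omega$ spill into $\Omega$. So $w_h|_\Omega=\bigl(\Cl^\circ(v|_\Omega)-v|_\Omega\bigr)+\Cl^c g|_\Omega$, and the second piece is nonzero on a collar of width $O(h)$ inside $\Omega$. Your density argument — writing $w_h=(\interp v-\interp v_\delta)+(\interp v_\delta-v_\delta)+(v_\delta-v)$ and invoking stability of $\Cl^\circ$ — controls the $\Cl^\circ$-part but never touches $\Cl^c g|_\Omega$: that contribution is the same for $v$ and $v_\delta$, so it cancels from $\interp v-\interp v_\delta$ and persists untouched in $\interp v_\delta-v_\delta$. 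It also cannot be treated by Proposition~\ref{prop:local_interpolation_estimate}, since $g$ is merely $L^\infty$ and no Sobolev regularity on $\Omega^c$ is assumed. The paper handles this missing piece with a separate argument: bound $\Vert\Cl^c g\Vert^p_{W^s_p(\Omega)}$ via the localization estimate \eqref{E:faermann}, note that only the $O(h^{-(d-1)})$ elements touching $\partial\Omega$ contribute, use the $L^\infty$-bound on $\Cl^c g$ with an inverse inequality on each such element, and sum to obtain $\Vert\Cl^c g\Vert^p_{W^s_p(\Omega)}\lesssim h^{1-sp}\,\mathcal H^{d-1}(\partial\Omega)\to0$ (using $sp<1$). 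Without some such argument your interior bound is incomplete.

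A smaller remark: for the mixed term the paper does not need your near-field/far-field split in $\Omega^c$. After $|w_h(x)-w_h(y)|^p\lesssim|(\interp v-v)(x)|^p+|(\Cl^c g-g)(y)|^p$, the first piece goes by Hardy's inequality as you say, and the second piece goes by a single application of the Dominated Convergence Theorem: $\Cl^c g\to g$ a.e.\ on $\Omega^c$ (Lebesgue differentiation) and $\Vert\Cl^c g\Vert_{L^\infty}\lesssim\Vert g\Vert_{L^\infty}$ provide the domination, with $\iint_{\Omega\times\Omega^c}|x-y|^{-d-sp}\,dx\,dy<\infty$ since $sp<1$ and $\Omega$ is bounded Lipschitz. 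Your proposal gestures at a ``density reduction'' here that is not actually needed.
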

\begin{proof}
In first place, we split
\[ \begin{aligned}
	\iint_{\Omega\times\Omega^c}  \frac{\l| (\interp v -v)(x) - (\interp v -v)(y) \r|^p}{|x-y|^{d+sp}} \; dxdy \lesssim &  \iint_{\Omega\times\Omega^c}  \frac{\l| \interp v (x) -v(x) \r|^p}{|x-y|^{d+sp}} \; dxdy  \\ 
	& + \iint_{\Omega\times\Omega^c}  \frac{\l| \Cl^c g (y) -g (y) \r|^p}{|x-y|^{d+sp}} \; dxdy .
\end{aligned} 
\]
	
Given $x \in \Omega$, we have $\int_{\Omega^c} \frac{1}{|x-y|^{d+sp}} dy \lesssim d(x,\pO)^{-sp},$ and since $\interp v -v \in W^s_p(\Omega)$, we invoke the Hardy inequality \cite[Theorem 1.4.4.4]{Grisvard} to deduce that
\[
\iint_{\Omega\times\Omega^c}  \frac{\l| (\interp v -v)(x)\r|^p}{|x-y|^{d+sp}} \; dxdy \lesssim \int_{\Omega}  \frac{\l| (\interp v -v)(x)\r|^p}{d(x,\pO)^{sp}} \; dx \lesssim \| \interp v -v \|_{W^s_p(\Omega)}^p.
\]
Since $g$ is uniformly bounded, we first claim that $\Cl^c g \to g$ a.e. in $\Omega^c$ as $h \to 0$. Indeed, for every $y \in T \subset \Omega^c$, we express $\Cl^c g(y)$ as a linear combination of $\Cl^c g(\x_i)$, where $\{ \x_i \}$ are the vertices of $T$, and deduce that
\[
\Cl^c g(y) = \int_{S_T^1} \vp^*_{y}(x) g(x) dx, 
\]
for some function $\vp^*_{y}$ satisfying $\int_{S_T^1} \vp^*_{y}(x) dx = 1$ and$\Vert \vp^*_{y} \Vert_{L^{\infty}(S_T^1)} \le C(d,\sigma) h^{-d}$. Since $S_T^1 \subset \overline{B}_{2h}(y)$, for every Lebesgue point y of $g$ we have
\[\begin{aligned}	
\big| (\Cl^c g)(y) - g(y) \big| &= \l| \int_{S_T^1} \vp^*_{y}(x) \big( g(x) - g(y) \big) dx\r| \le \Vert \vp^*_{y} \Vert_{L^{\infty}(S_T^1)} \int_{B_{2h}(y)} |g(x) -g(y)|dx \\
&\lesssim \frac{1}{\l| B_{2h}(y) \r| }\int_{B_{2h}(y)} |g(x) -g(y)|dx \to 0 \textrm{ as } h \to 0.
\end{aligned} 
\]
By the Lebesgue differentiation theorem, almost every $y \in \Omega^c$ is a Lebesgue point of $g$, and therefore 
	\begin{equation}\label{E:OmegaC-ae-converge}
	\Cl^c g \to g \; \textrm{ a.e. in } \Omega^c \textrm{ as } h \to 0.
	\end{equation}
	In addition, it follows from $\Vert \vp^*_{y} \Vert_{L^{\infty}(S_T^1)} \le C(d,\sigma) h^{-d}$ that
	$\Vert \Cl^c g \Vert_{L^{\infty}(\mRd)} \lesssim \Vert g \Vert_{L^{\infty}(\Omega^c)}$, and hence
\[
\iint_{\Omega\times\Omega^c} \frac{\l| \Cl^c g (y) -g (y) \r|^p}{|x-y|^{d+sp}}  dxdy \lesssim \iint_{\Omega\times\Omega^c}  \frac{\Vert g \Vert^p_{L^{\infty}(\Omega^c)}}{|x-y|^{d+sp}} < \infty.
\]
Applying the Lebesgue Dominated convergence theorem, we obtain
\[
\iint_{\Omega\times\Omega^c} \frac{\l| \Cl^c g (y) -g (y) \r|^p}{|x-y|^{d+sp}}  dxdy \to 0 \textrm{ as } h \to 0.
\]
Therefore, we have shown that
\[
\iint_{\Omega\times\Omega^c}  \frac{\l| (\interp v -v)(x) - (\interp v -v)(y) \r|^p}{|x-y|^{d+sp}} \; dxdy \lesssim  \| \interp v -v \|_{W^s_p(\Omega)}^p + o(1),
\]
and thus we just need to bound the interpolation error in $W^s_p(\Omega)$.

We write $\interp v -v = \Big( \Cl^\circ \l(v \big|_\Omega \r) - v \Big) + \Cl^c g$, and split
\[
\big\|\interp v -v \big\|_{W^s_p(\Omega)}^p \lesssim
\big\| \Cl^\circ \l(v \big|_\Omega \r) - v \big\|_{W^s_p(\Omega)}^p + 
\big\| \Cl^c g \big\|_{W^s_p(\Omega)}^p.
\]	
Using the localization estimate \eqref{E:faermann}, we bound $\big\| \Cl^c g \big\|_{W^s_p(\Omega)}^p$ by
\[ \begin{aligned}
\big\| \Cl^c g \big\|_{W^s_p(\Omega)}^p \leq
\sum_{T \colon T \subset \overline\Omega} \bigg[ & \iint_{T \times (S^1_T \cap \Omega)} \frac{| \Cl^c g (x) - \Cl^c g (y)|^p}{|x-y|^{d+sp}} dy  dx \\ &  + \l(1 + C(\sigma) \, \frac{2^p \w_{d-1}}{s p h_T^{s p}} \r) \, \| \Cl^c g \|^p_{L^p(T)} \bigg]
\end{aligned} \]
Recalling $\Vert \Cl^c g \Vert_{L^{\infty}(\mRd)} \lesssim \Vert g \Vert_{L^{\infty}(\Omega^c)}$ and using an inverse inequality, we have
\[ \begin{aligned}
\big\| \Cl^c g \big\|_{W^s_p(\Omega)}^p \lesssim
\sum_{T \subset \overline \Omega \colon S^1_T \cap \Omega^c \neq \emptyset} \bigg[  h_T^{-sp+d} \Vert \Cl^c g \Vert_{L^{\infty}(\mRd)}^p	\bigg] \lesssim \Vert g \Vert_{L^{\infty}(\Omega^c)} \!\!\!\!\!
\sum_{T \subset \overline \Omega \colon S^1_T \cap \Omega^c \neq \emptyset} 
h_T^{-sp+d},
\end{aligned} \]
for $h$ small enough. The sum in the right hand side above can be straightforwardly estimated by 
\[ \begin{aligned}
\sum_{T \subset \overline \Omega \colon S^1_T \cap \Omega^c \neq \emptyset} 
h_T^{-sp+d}	& \lesssim \sum_{T \subset \overline{\Omega}: \; T \cap \Omega^c \neq \emptyset} h_T^{-sp+d} \le h^{1-sp} \sum_{T \subset \overline{\Omega}: \; T \cap \Omega^c \neq \emptyset} h_T^{d-1} \\ 
& \lesssim h^{1-sp}\; \mathcal{H}^{d-1}(\pO),
\end{aligned} \]
where $\mathcal{H}^{d-1}$ is the $(d-1)$-dimensional Hausdorff measure. This establishes that $\big\| \Cl^c g \big\|_{W^s_p(\Omega)}^p \to 0$ as $h \to 0$.		
	
It only remains to show that $\big\| \Cl^\circ \l(v \big|_\Omega \r) - v \big\|_{W^s_p(\Omega)}^p \to 0$ as $h \to 0$. For simplicity, we write $\Cl^\circ v$ instead of $\Cl^\circ \l(v \big|_\Omega \r)$. Since $\Cl^\circ v$ is a continuous linear operator from $W^s_p(\Omega)$ to $W^s_p(\Omega)$ with
\[
\| \Cl^\circ v \|_{W^s_p(\Omega)} \le C(d,s,p,\sigma,\Omega) \| v \|_{W^s_p(\Omega)},
\]
it suffices to prove the convergence for $v \in C^{\infty}(\Oc)$. We use the localization estimate \eqref{E:faermann} for $\big\| \Cl^\circ v - v \big\|_{W^s_p(\Omega)}^p$ and write
\[ \begin{aligned}
\big\| \Cl^\circ v - v \big\|_{W^s_p(\Omega)}^p & \leq  
\sum_{T \colon T \subset \overline\Omega} \bigg[ \iint_{T \times (S^1_T \cap \Omega)} \frac{| (\Cl^\circ v - v) (x) - (\Cl^\circ v - v) (y)|^p}{|x-y|^{d+sp}} dy  dx \\
& \qquad \qquad \qquad  + \l(1 + C(\sigma) \, \frac{2^p \w_{d-1}}{s p h_T^{s p}} \r) \, \| \Cl^\circ v - v \|^p_{L^p(T)} \bigg] \\
&  =: \sum_{T  \colon S_T^2 \subset \Omega} I_T(v) \ + \sum_{T \subset \overline\Omega \colon S_T^2  \cap \Omega^c \neq \emptyset} I_T(v).
\end{aligned} \]
	
On the one hand, we point out that, because
\[
\l| \bigcup_{T \subset \overline\Omega \colon S^2_T \cap \Omega^c \neq \emptyset} T \r| \approx h,
\]
and $v \in W^s_p(\Omega)$, we have
\[
\sum_{T \subset \Omega \colon S_T^2  \cap \widetilde{\Omega}^c \neq \emptyset} I_T(v) \le C(d,s,p,\sigma) \sum_{T \subset \Omega \colon S_T^2  \cap \widetilde{\Omega}^c \neq \emptyset} \|v\|_{W^s_p(S^2_T \cap \Omega)}^p \to 0, \mbox{ as } h \to 0.
\]

On the other hand, over the elements $T$ such that $S_T^2 \subset \Omega$, \Cref{prop:local_interpolation_estimate} gives
\[
\sum_{T \colon S_T^2 \subset \widetilde{\Omega}} I_T(v) \lesssim h^{p(t-s)} \sum_{T \colon S_T^2 \subset \widetilde{\Omega}} |v|_{W^t_p(S^2_T)}^p \lesssim h^{p(t-s)} |v|^p_{W^t_p(\Omega)},
\]
for every $v$ smooth in $\Omega$. This finishes the proof.
\end{proof}

If, under the same conditions as in \Cref{prop:interpolation_estimate}, we add the hypothesis that $v$ is smoother than $W^s_p(\Omega)$, then it is possible to derive interpolation rates.

\begin{Proposition}[interpolation rate] \label{prop:interpolation_estimate-2}
Assume that $v \in W^t_p(\Lambda)$ for some $t \in (s,2]$. Then, 
\[
\iint_{Q_{\Omega}}  \frac{\l| (\interp v -v)(x) - (\interp v -v)(y) \r|^p}{|x-y|^{d+sp}} \; dxdy \lesssim h^{p(t-s)} |v|^p_{W^t_p(\Lambda)} .
\]
\end{Proposition}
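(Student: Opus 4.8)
The plan is to mimic the structure of the proof of \Cref{prop:interpolation_estimate}, but now exploiting the quantitative interpolation estimates of \Cref{prop:local_interpolation_estimate} on \emph{all} elements, including those near $\partial\Omega$, and handling the exterior contribution with a quantitative bound in place of the dominated-convergence argument. As before, we first split
\[
\iint_{Q_\Omega} \frac{|(\interp v - v)(x) - (\interp v - v)(y)|^p}{|x-y|^{d+sp}}\,dxdy \lesssim \|\interp v - v\|_{W^s_p(\Omega)}^p + \iint_{\Omega\times\Omega^c}\frac{|\Cl^c g(y) - g(y)|^p}{|x-y|^{d+sp}}\,dxdy,
\]
where we used the Hardy inequality on $\Omega$ to absorb the term $\iint_{\Omega\times\Omega^c}|(\interp v - v)(x)|^p|x-y|^{-d-sp}$ into $\|\interp v - v\|_{W^s_p(\Omega)}^p$, exactly as in the previous proof. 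Note that since $v \in W^t_p(\Lambda)$ and $t > s$, the trace of $v$ on $\Omega^c$ is $g = v|_{\Omega^c}$, so $g \in W^t_p(\Lambda\setminus\Omega)$ and in particular $g$ has the same Sobolev regularity as $v$ near $\partial\Omega$; this is what lets us treat the exterior term quantitatively.

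For the exterior term, the key observation is that on each element $T\subset\Omega^c$ intersecting a neighborhood of $\partial\Omega$, $\Cl^c g$ restricted to $T$ is (up to the boundary averaging) a standard Cl\'ement interpolant of $g$, so that $\|\Cl^c g - g\|_{L^p(T)} \lesssim h^t |g|_{W^t_p(S^1_T)}$ and, more importantly, the integral $\iint_{\Omega\times\Omega^c}|\Cl^c g(y)-g(y)|^p|x-y|^{-d-sp}dxdy$ is only nontrivial for $y$ within distance $\sim h$ of $\partial\Omega$ (since for $y$ far from $\partial\Omega$ the factor $\int_\Omega |x-y|^{-d-sp}dx$ decays, but we really want to localize in $y$). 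Writing $\int_\Omega|x-y|^{-d-sp}dx \lesssim \dist(y,\partial\Omega)^{-sp}$ for $y\in\Omega^c$ and applying a Hardy inequality on the $\Omega^c$ side, one gets $\iint_{\Omega\times\Omega^c}|\Cl^c g - g|^p|x-y|^{-d-sp} \lesssim \int_{\Omega^c}|\Cl^c g(y) - g(y)|^p\dist(y,\partial\Omega)^{-sp}dy \lesssim \|\Cl^c g - g\|_{W^s_p(\Lambda\setminus\Omega)}^p$, and then the quantitative Cl\'ement estimate (via the localization bound \eqref{E:faermann} applied on $\Lambda\setminus\Omega$, or directly) gives $\lesssim h^{p(t-s)}|g|^p_{W^t_p(\Lambda\setminus\Omega)} \le h^{p(t-s)}|v|^p_{W^t_p(\Lambda)}$.

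For the interior term $\|\interp v - v\|_{W^s_p(\Omega)}^p$, we again decompose $\interp v - v = (\Cl^\circ(v|_\Omega) - v) + \Cl^c g$ and bound the two pieces separately. For $\|\Cl^c g\|_{W^s_p(\Omega)}^p$, the same computation as in \Cref{prop:interpolation_estimate} using the localization estimate and an inverse inequality shows this is $\lesssim h^{1-sp}\mathcal{H}^{d-1}(\partial\Omega)\,\|g\|_{L^\infty}$, which is $\lesssim h^{p(t-s)}$ provided $1 - sp \ge p(t-s)$, i.e. $pt \le 1$; if $pt > 1$ one refines this estimate using the $W^t_p$ regularity of $g$ near $\partial\Omega$ rather than just its boundedness, obtaining the rate $h^{p(t-s)}$ in that regime as well. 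For $\|\Cl^\circ(v|_\Omega) - v\|_{W^s_p(\Omega)}^p$, we apply the localization estimate \eqref{E:faermann} and then \Cref{prop:local_interpolation_estimate} \emph{on every element} — no longer splitting off boundary elements — to get $\sum_T I_T(v) \lesssim h^{p(t-s)}\sum_T |v|_{W^t_p(S^2_T)}^p \lesssim h^{p(t-s)}|v|^p_{W^t_p(\Lambda)}$, using finite overlap of the patches $\{S^2_T\}$ and that $S^2_T\subset\Lambda$. Combining the three pieces yields the claimed bound.

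The main obstacle I anticipate is the regime $pt > 1$ in the estimate for $\|\Cl^c g\|_{W^s_p(\Omega)}^p$: the crude bound using only $\|g\|_{L^\infty}$ gives $h^{1-sp}$, which beats $h^{p(t-s)}$ only when $pt \le 1$. (In the paper's main application $p=1$ and $t = 2r < 2s < 1$, so $pt < 1$ and the crude bound suffices — this is likely why the statement is written as is.) To cover the full range $t\in(s,2]$ one must instead exploit that $\Cl^c g$ is a good \emph{local} approximation of a $W^t_p$ function: near $\partial\Omega$, on each boundary element $T$ one has $\|\Cl^c g - g\|_{L^p(T)} \lesssim h^t|g|_{W^t_p(S^1_T)}$ and correspondingly $\|\Cl^c g\|_{L^p(T)}$ is comparable to $\|g\|_{L^p(S^1_T\cap\Omega^c)} + h^t|g|_{W^t_p}$, and one then has to track how these interact with the $h_T^{-sp}$ weight from \eqref{E:faermann} and the boundary-layer measure $\sim h$. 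This is a somewhat delicate but standard boundary-layer estimate; everything else is a routine adaptation of the preceding proof.
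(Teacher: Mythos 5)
Your proposal has a genuine gap, and it stems from carrying over the decomposition $\interp v - v = \bigl(\Cl^\circ (v|_\Omega) - v\bigr) + \Cl^c g$ from the proof of \Cref{prop:interpolation_estimate}. That decomposition is adapted to proving convergence \emph{without rates}; when you bound the two pieces separately you lose the cancellation that occurs between them near $\pO$, and this cancellation is precisely what produces the rate $h^{p(t-s)}$. Concretely: near a boundary element $T$, $\Cl^\circ(v|_\Omega)$ vanishes at the nodes on $\pO$ (they belong to $\Nhc$), so $\|\Cl^\circ(v|_\Omega) - v\|_{L^p(T)}$ is of size $h^{d/p}\|v\|_{L^\infty}$, not $h^{t}|v|_{W^t_p(S^1_T)}$. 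Your step ``apply \Cref{prop:local_interpolation_estimate} on every element'' to $\Cl^\circ(v|_\Omega) - v$ is therefore invalid: \Cref{prop:local_interpolation_estimate} is a statement about $\interp v - v$, and it is the combined operator $\interp$ — not $\Cl^\circ$ alone — that reproduces $\mathcal{P}_1$ on $\Lambda$ and hence enjoys Bramble--Hilbert bounds uniformly up to $\pO$. Feeding the crude $L^\infty$ bound into Faermann's localization gives $h^{1-sp}$ from the boundary layer for \emph{both} of your pieces, not just $\Cl^c g$; you flag this for $\Cl^c g$ and suggest repairing it via the $W^t_p$ regularity of $g$, but that does not help — the size of $\Cl^c g$ on $\Omega$ (respectively of $v-\Cl^\circ v$ near $\pO$) is governed by the trace of $g$ (respectively $v$) on $\pO$, which is $O(1)$, so each piece separately really is $\sim h^{1/p - s}$. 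The rate is only recovered because the two pieces cancel, and your approach forfeits that.

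The paper's proof avoids the decomposition entirely. It splits $Q_\Omega \subset (\Lambda\times\Lambda) \cup \bigl((\Omega^c\setminus\Lambda)\times\Omega\bigr) \cup \bigl(\Omega\times(\Omega^c\setminus\Lambda)\bigr)$. On $\Lambda\times\Lambda$ it treats $\interp$ as a single quasi-interpolant on $\Lambda$: apply \Cref{prop:faermann} on $\Lambda$ and then \Cref{prop:local_interpolation_estimate} to $\interp v - v$ on \emph{every} element (this is now legitimate because $v\in W^t_p(\Lambda)$, so $S^2_T\subset\Lambda$ carries the required regularity, and $\interp$ reproduces $\mathcal P_1$ globally). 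Summing with finite overlap of the patches $\{S^2_T\}$ yields $|\interp v - v|_{W^s_p(\Lambda)}^p \lesssim h^{p(t-s)}|v|_{W^t_p(\Lambda)}^p$. The two remaining pieces have $\dist(\Omega^c\setminus\Lambda,\Omega)>0$, so the kernel is bounded there; moreover $v$ and $\interp v$ both vanish on $\Omega^c\setminus\Lambda$ (since $\operatorname{supp} g\subset\Lambda$), so those integrals reduce to $\|\interp v - v\|_{L^p(\Omega)}^p\lesssim h^{tp}|v|^p_{W^t_p(\Lambda)}$, which is harmless. This is both simpler than your route and valid for the full range $t\in(s,2]$, $p\ge 1$.
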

\begin{proof}
We split $Q_{\Omega}$ into the sets
\[
Q_{\Omega} \subset \big( \Lambda \times \Lambda \big)
\cup \Big( \big( \Omega^c \setminus \Lambda \big) \times \Omega \Big) \; \cup \; \Big( \Omega \times \big( \Omega^c \setminus \Lambda \big) \Big).
\]
The estimate in $\Lambda \times \Lambda$ is standard and follows along the lines of the estimates for $\Vert \interp v - v \Vert_{W^s_p(\Omega)}$ in \Cref{prop:interpolation_estimate}. The estimate in $\big( \Omega^c \setminus \Lambda \big) \times \Omega$ reduces to $\Vert \interp v - v \Vert_{L^p(\Omega)}$ because $\dist(\Omega^c \setminus \Lambda, \Omega) > 0$ and $v$ is zero in $\Omega^c \setminus \Lambda$.
\end{proof}

\subsection{Numerical schemes} \label{SS:schemes}
We briefly include some details about the implementation and solution of the discrete problem \eqref{E:WeakForm-discrete}. In first place we point out that we can compute $a_{u_h}(u_h, v_h)$ for any given $u_h \in \mathbb{V}_h^g, \ v_h \in \mathbb{V}_h^0$ by following the implementation techniques from \cite{AcosBersBort2017short, AcosBort2017fractional}.
Further details on the quadrature rules employed and the treatment of the discrete form $a_{u_h}(u_h, v_h)$ can be found in \cite{BoLiNo19computational}.

In order to solve the nonlinear discrete problem we resort to two different approaches: a semi-implicit $H^\alpha$-gradient flow and a damped Newton algorithm. For the former, we consider $\alpha \in [0,1)$ (with the convention that $H^0 (\Omega) = L^2(\Omega)$), fix a step size $\tau > 0$ and, given an initial guess $u_h^0$, we solve the following equation in every step,
\begin{equation}\label{E:semi-implicit-GF}
\frac1{\tau} \langle {u^{k+1}_h - u^k_h} \ , \ v_h \rangle_{H^{\alpha}(\Omega)} = -a_{u^k_h}(u^{k+1}_h \ , \ v_h), \qquad \forall v_h \in \mathbb{V}^0_h .
\end{equation}

For the damped Newton method, we take the first variation of $a_{u_h}(u_h,v_h)$, $\frac{\delta a_{u_h}(u_h,v_h)}{\delta u_h}(w_h)$, which is well-defined for all $u_h \in \mathbb{V}^g_h$, $v_h, w_h \in \mathbb{V}^0_h$. We point out that the analogue of this variation at the continuous level is not well-defined. The resulting step is obtained by solving for $w_h^k$ the equation
\begin{equation}\label{E:damped-Newton-wh}
\frac{\delta a_{u_h}(u_h^k, v_h)}{\delta u_h^k}(w_h^k)
= -a_{u_h^k}(u_h^k, v_h), \qquad \forall v_h \in \mathbb{V}^0_h
\end{equation}
and performing a line search to determine the step size. We refer the reader to \cite{BoLiNo19computational} for full details on these algorithms and further discussion on their performance.

\section{Convergence} \label{sec:convergence}

In this section, we prove the convergence of the discrete solution $u_h$ without assumptions on the regularity of the nonlocal minimal graphs.

We first prove that the discretization proposed in \Cref{ss:FE_discretization} is energy-consistent. Due to our assumption that $g \in C_c(\Omega^c)$, we know from Proposition \ref{Prop:domain-energy} that the energy minimizing function $u \in W^{2s}_1(\Omega)$, while Theorem  \ref{thm:smoothness} guarantees that for any region $\widetilde{\Omega} \Subset \Omega$, $u \in W^{2t}_1(\widetilde{\Omega})$ for every $t \in \mR$.
These two properties are sufficient to guarantee the consistency of the discrete energy.

\begin{Theorem}[energy consistency] \label{thm:consistency}
Let $s \in (0,1/2)$, and assume that $\Omega$ and $g$ satisfy \eqref{E:assumptions} and \eqref{E:support}. Let $u$ and $u_h$ be, respectively, the solutions to \eqref{E:WeakForm-NMS-Graph} and \eqref{E:WeakForm-discrete}. Then, 
\begin{equation*}
    \lim_{h \to 0} I_s[u_h] = I_s[u].
\end{equation*}
\end{Theorem}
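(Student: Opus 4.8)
The plan is to prove the two inequalities $\limsup_{h\to0} I_s[u_h]\le I_s[u]$ and $\liminf_{h\to0} I_s[u_h]\ge I_s[u]$ separately, recalling throughout that $u$ and $u_h$ are the minimizers of $I_s$ over $\mathbb{V}^g$ and $\mathbb{V}^g_h$, respectively; the first bound comes from comparison with the quasi-interpolant $\interp u$, the second from a compactness-plus-lower-semicontinuity argument that uses the uniqueness of the minimizer.

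For the upper bound I would first note that $F_s$ is globally Lipschitz, with constant $K$ equal to the bound in \eqref{E:bound-Gs} since $F_s'=G_s$. Cancelling one power of $|x-y|$, this yields the stability estimate $|I_s[v]-I_s[w]|\le K\iint_{Q_{\Omega}}|(v-w)(x)-(v-w)(y)|\,|x-y|^{-d-2s}\,dxdy$ for any $v,w$ agreeing with $g$ in $\Omega^c$. Because $I_s[u]<\infty$, \Cref{Prop:domain-energy} gives $u|_\Omega\in W^{2s}_1(\Omega)$, so \Cref{prop:interpolation_estimate} — applied with the exponent $2s<1$ in place of $s$ and with $p=1$ — shows that the right-hand side above with $v=\interp u$, $w=u$ tends to $0$, i.e.\ $I_s[\interp u]\to I_s[u]$. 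Since $\interp u\in\mathbb{V}^g_h$ and $u_h$ minimizes $I_s$ over $\mathbb{V}^g_h$, we have $I_s[u_h]\le I_s[\interp u]$, and the upper bound follows.

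For the lower bound, the step just completed gives $\sup_h I_s[u_h]<\infty$. The first estimate in \Cref{lemma:W2s} then bounds $|u_h|_{W^{2s}_1(\Omega)}$ uniformly in $h$, and repeating the computation \eqref{eq:L1-bound} — now with $\Cl^c g$, which satisfies $\|\Cl^c g\|_{L^\infty(\mRd)}\lesssim\|g\|_{L^\infty(\Omega^c)}$, in place of the exterior datum — also bounds $\|u_h\|_{L^1(\Omega)}$ uniformly. Given any subsequence along which $I_s[u_h]$ converges to $\liminf_{h\to0} I_s[u_h]$, the compact embedding of $W^{2s}_1(\Omega)$ into $L^1(\Omega)$ lets us extract a further subsequence $\{u_{h_k}\}$ converging in $L^1(\Omega)$ and a.e.\ in $\Omega$ to some $u^\star\in W^{2s}_1(\Omega)$; combining this with \eqref{E:OmegaC-ae-converge} we may also assume $u_{h_k}\to u^\star$ a.e.\ in $\mRd$ with $u^\star=g$ in $\Omega^c$. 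The integrand of $I_s$ is a nonnegative continuous function of $u(x)$ and $u(y)$, so Fatou's lemma gives $I_s[u^\star]\le\liminf_k I_s[u_{h_k}]$; together with the upper bound this yields $I_s[u^\star]\le I_s[u]<\infty$, whence $u^\star\in\mathbb{V}^g$ by \Cref{Prop:domain-energy}, and the minimality of $u$ gives $I_s[u]\le I_s[u^\star]$. Chaining these inequalities shows $\liminf_{h\to0} I_s[u_h]\ge I_s[u]$, which combined with the upper bound proves the claim.

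I expect the lower bound to be the delicate part: one must verify that the discrete solutions are bounded in $L^1(\Omega)$ itself, and not merely in the $W^{2s}_1$-seminorm, so that the Rellich--Kondrachov embedding applies, and one must propagate the almost-everywhere convergence into $\Omega^c$ so that the limit $u^\star$ is an admissible competitor in $\mathbb{V}^g$. Everything else — the Lipschitz property of $F_s$ and the interpolation estimate for the upper bound, and Fatou's lemma together with uniqueness of the minimizer for the lower bound — is routine.
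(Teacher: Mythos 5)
Your proof is correct, and it takes a more explicit two-sided route than the paper. The paper's proof uses only the comparison with the quasi-interpolant: since $\interp u\in\mathbb{V}_h^g$ one has $I_s[u_h]\le I_s[\interp u]$, and the Lipschitz bound on $F_s$ together with \Cref{prop:interpolation_estimate} gives $I_s[\interp u]\to I_s[u]$; the paper then asserts $0\le I_s[u_h]-I_s[u]$ and concludes. That lower bound is not entirely immediate as written, because $u_h$ equals $\Cl^c g$ rather than $g$ in $\Omega^c$, so it does not belong to $\mathbb{V}^g$, the class over which $u$ is the unique minimizer. Your argument supplies exactly this ingredient: the $\limsup$ side coincides with the paper's comparison step, while the $\liminf$ side is obtained from the uniform $W^{2s}_1(\Omega)$ bound of \Cref{lemma:W2s}, the $L^1(\Omega)$ bound via \eqref{eq:L1-bound} (with $\Cl^c g$ in place of $g$, using $\|\Cl^c g\|_{L^\infty}\lesssim\|g\|_{L^\infty}$), Rellich compactness, the almost-everywhere convergence $\Cl^c g\to g$ in $\Omega^c$ from \eqref{E:OmegaC-ae-converge}, Fatou's lemma, and uniqueness of the minimizer in $\mathbb{V}^g$. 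This is precisely the compactness-plus-lower-semicontinuity mechanism that the paper itself deploys in the subsequent proof of the Convergence theorem; you have brought it forward to the energy-consistency stage, yielding a self-contained argument that does not lean on the unproved comparison $I_s[u]\le I_s[u_h]$. The trade-off is that your proof is longer and uses the full compactness machinery, whereas the paper's intent was evidently to keep the consistency proof short and defer the compactness argument to the convergence proof.
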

\begin{proof}
Let $\interp$ be defined according to \eqref{E:interpolation}. Since 
$\interp u \in \mathbb{V}_h^g$, it follows that $I_s[\interp u] \geq I_s[u_h]$ and hence
\begin{equation*}
\begin{aligned}
    0 &\leq I_s[u_h] - I_s[u] \leq I_s[\interp u] - I_s[u] \\
    &= \iint_{Q_{\Omega}} \l( F_s\l(\frac{\interp u(x)-\interp u(y)}{|x-y|}\r)
        - F_s\l(\frac{u(x)-u(y)}{|x-y|}\r) \r) \frac{dxdy}{|x-y|^{d+2s-1}}.
\end{aligned}
\end{equation*}
Because $F_s$ is Lipschitz continuous, we deduce
\begin{equation*}
\begin{aligned}
0 \le   I_s[u_h] - I_s[u]  &\lesssim \iint_{Q_{\Omega}} 
    \frac{\l| (\interp u -u)(x) - (\interp u -u)(y) \r|}{|x-y|^{d+2s}} \; dxdy,
\end{aligned}
\end{equation*}
and using \Cref {prop:interpolation_estimate} we conclude that $\lim_{h \to 0} I_s[u_h] = I_s[u]$.
\end{proof}

Finally, we prove the convergence of the finite element approximations to the nonlocal minimal graph as the maximum element size $h \to 0$.

\begin{Theorem}[convergence] 
Under the same hypothesis as in \Cref{thm:consistency}, it holds that
\[
\lim_{h \to 0} \| u - u_h \|_{W^{2r}_1(\Omega)} = 0 \quad \forall r \in [0,s).
\]
\end{Theorem}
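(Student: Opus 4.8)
The plan is to combine the uniform bounds coming from energy consistency with a compactness argument to obtain $L^1(\Omega)$-convergence, and then to interpolate between $L^1(\Omega)$ and $W^{2s}_1(\Omega)$ to reach $W^{2r}_1(\Omega)$ for every $r<s$.

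\emph{Uniform bounds.} By \Cref{thm:consistency} we have $I_s[u_h]\to I_s[u]<\infty$, so $\Lambda_0:=\sup_{h>0} I_s[u_h]<\infty$. The first estimate in \Cref{lemma:W2s} then yields $|u_h|_{W^{2s}_1(\Omega)}\le C_1+C_2\Lambda_0$ uniformly in $h$. To bound $\|u_h\|_{L^1(\Omega)}$, we reproduce the computation \eqref{eq:L1-bound}: on $\Omega^c$ one has $u_h=\Cl^c g$ with $\|\Cl^c g\|_{L^\infty(\mathbb{R}^d)}\lesssim\|g\|_{L^\infty(\Omega^c)}$ uniformly in $h$ (as established in the proof of \Cref{prop:interpolation_estimate}), and using \eqref{eq:lower-bound-Fs} together with the Lipschitz continuity of $F_s$ we obtain $\|u_h\|_{L^1(\Omega)}\lesssim 1+I_s[u_h]\le C$. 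Hence $\{u_h\}$ is bounded in $W^{2s}_1(\Omega)$.

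\emph{Compactness and identification of the limit.} Since $\Omega$ is a bounded Lipschitz domain and $2s<1$, the embedding $W^{2s}_1(\Omega)\hookrightarrow\hookrightarrow L^1(\Omega)$ is compact. Thus any subsequence of $\{u_h\}$ admits a further subsequence $\{u_{h_k}\}$ converging in $L^1(\Omega)$ and a.e.\ in $\Omega$ to some $w\in W^{2s}_1(\Omega)$. Combined with \eqref{E:OmegaC-ae-converge}, the function $\bar u$ defined by $\bar u:=w$ in $\Omega$ and $\bar u:=g$ in $\Omega^c$ satisfies $u_{h_k}\to\bar u$ a.e.\ in $\mathbb{R}^d$. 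As $F_s\ge 0$ is continuous, Fatou's lemma and \Cref{thm:consistency} give $I_s[\bar u]\le\liminf_k I_s[u_{h_k}]=I_s[u]<\infty$, so $\bar u\in\mathbb{V}^g$ by \Cref{Prop:domain-energy}. Because $I_s$ is strictly convex, $u$ is its unique minimizer over $\mathbb{V}^g$, hence $\bar u=u$. Since every subsequence admits a further subsequence converging to the same limit $u$, the whole sequence satisfies $u_h\to u$ in $L^1(\Omega)$.

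\emph{Upgrading the topology.} Fix $r\in(0,s)$ (the case $r=0$ being $L^1$-convergence, already proved). For any $v$ on $\Omega$ and any $\delta>0$ we split the Gagliardo seminorm according to $|x-y|<\delta$ or $|x-y|\ge\delta$; writing $|x-y|^{-d-2r}=|x-y|^{-d-2s}|x-y|^{2(s-r)}$ in the former region and $|v(x)-v(y)|\le|v(x)|+|v(y)|$ in the latter, we obtain
\[
|v|_{W^{2r}_1(\Omega)}\ \lesssim\ \delta^{2(s-r)}\,|v|_{W^{2s}_1(\Omega)}\ +\ \delta^{-2r}\,\|v\|_{L^1(\Omega)}.
\]
Applying this with $v=u-u_h$, we note that $|u-u_h|_{W^{2s}_1(\Omega)}$ is bounded uniformly in $h$, because $u\in W^{2s}_1(\Omega)$ (\Cref{Cor:characterization-NMS} and \Cref{Prop:domain-energy}) and $\sup_h|u_h|_{W^{2s}_1(\Omega)}<\infty$. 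Given $\varepsilon>0$, first choose $\delta$ so small that $\delta^{2(s-r)}\sup_h|u-u_h|_{W^{2s}_1(\Omega)}<\varepsilon/2$, and then $h$ so small that $\delta^{-2r}\|u-u_h\|_{L^1(\Omega)}<\varepsilon/2$; together with $\|u-u_h\|_{L^1(\Omega)}\to 0$ this gives $\|u-u_h\|_{W^{2r}_1(\Omega)}\to 0$.

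\emph{Main obstacle.} The crux is the limit passage in the middle step: one must certify that the a.e.\ limit $\bar u$ is an admissible competitor for the continuous problem, i.e.\ that it belongs to $\mathbb{V}^g$ with the exact exterior datum $g$ rather than its Cl\'ement approximations. This rests on \eqref{E:OmegaC-ae-converge} together with the lower semicontinuity of $I_s$ under a.e.\ convergence; once $\bar u\in\mathbb{V}^g$ satisfies $I_s[\bar u]\le I_s[u]$, strict convexity forces $\bar u=u$, and a standard subsequence argument promotes this to convergence of the full sequence. The uniform bounds and the interpolation step are routine.
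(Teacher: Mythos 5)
Your proof follows essentially the same route as the paper's: uniform $W^{2s}_1(\Omega)$-bounds from energy consistency (\Cref{thm:consistency}) together with \Cref{lemma:W2s} and \eqref{eq:L1-bound}; compactness of $W^{2s}_1(\Omega)\hookrightarrow L^1(\Omega)$ to extract an a.e.\ convergent subsequence; extension by $g$ on $\Omega^c$ using \eqref{E:OmegaC-ae-converge} and Fatou's lemma to identify the limit as the unique minimizer $u$; a subsequence argument to promote this to full-sequence $L^1$-convergence; and finally interpolation from $L^1$ and $W^{2s}_1$ to $W^{2r}_1$. The only difference is cosmetic: you make the last step explicit via a $\delta$-splitting of the Gagliardo kernel, whereas the paper simply cites interpolation between $L^1(\Omega)$ and $W^{2s}_1(\Omega)$.
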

\begin{proof}
Due to our assumptions on $g$, we apply Theorem \ref{thm:consistency} to deduce that the finite element discretization is energy-consistent. Thus, the family $\{I_s[u_h]\}_{h>0}$ is uniformly bounded.

Similarly to the first formula in \eqref{eq:norm_bound}, we obtain
\begin{equation*}
|u_h|_{W^{2s}_1(\Omega)} \le C_1 + C_2 I_s[u_h] ,
\end{equation*}
whereas $\Vert u_h \Vert_{L^1(\Omega)}$ is bounded as in \eqref{eq:L1-bound}. It thus follows that $\Vert u_h \Vert_{W^{2s}_1(\Omega)}$ is uniformly bounded.

This fact, combined with the compactness of the embedding $W^{2s}_1(\Omega) \subset L^1(\Omega)$, allows us to extract a subsequence $\{u_{h_n}\}$, which converges to some $\widetilde{u}$ in $L^1(\Omega)$. According to \eqref{E:OmegaC-ae-converge} in the proof of \Cref{prop:interpolation_estimate}, $\{u_{h_n}\}$ converges to $g$ a.e. in $\Omega^c$; then, extending $\widetilde{u}$ as $g$ onto $\Omega^c$, we have by Fatou's Lemma that
\begin{equation*}
I_s[\widetilde{u}] \leq \liminf_{n \to \infty} I_s[u_{h_n}] = I_s[u].
\end{equation*}
As a consequence, $I_s[\widetilde{u}]$ is finite and, by \Cref{Prop:domain-energy}, it follows that $\widetilde{u} \in \mathbb{V}^g$. Because $\widetilde{u}$ minimizes the energy $I_s$,
it is a solution of \eqref{E:WeakForm-discrete},
and by uniqueness it must be $\widetilde{u} = u$. Since any subsequence of $\{ u_h \}$
has a subsequence converging to $u$ in $L^1(\Omega)$, it follows immediately that  $u_h$ converges to $u$ in $L^1(\Omega)$ as $h \to 0$. 

Finally, the convergence in the $W^{2r}_1(\Omega)$-norm for $r \in (0,s)$ is obtained by
interpolation between the spaces $L^1(\Omega)$ and $W^{2s}_1(\Omega)$.
\end{proof}

\section{A geometric notion of error} \label{sec:geometric-error}
In this section, we introduce a geometric notion of error and prove the convergence of the discrete approximations proposed in \Cref{ss:FE_discretization} according to it. The error estimate for this novel quantity mimics the estimates in the classical Plateau problem for the error
\begin{equation}\label{eq:def-e}
\begin{aligned}
e^2(u,u_h) & =\int_{\Omega} \ \Big| \widehat{\nu}(\nabla u) - \widehat{\nu}(\nabla u_h) \Big|^2 \;\frac{Q(\nabla u) + Q(\nabla u_h)}{2} \ dx , \\
& = \int_{\Omega} \ \Big( \widehat{\nu}(\nabla u) - \widehat{\nu}(\nabla u_h) \Big) \cdot \ \Big( \nabla (u-u_h), 0 \Big) dx ,
\end{aligned}
\end{equation}
where $Q(\pmb{a}) = \sqrt{1+|\pmb{a}|^2}$, $\widehat{\nu}(\pmb{a}) = \frac{(\pmb{a},-1)}{Q(\pmb{a})}$. Since, in this context, $\widehat{\nu}(\nabla u)$ is the normal unit vector on the graph of $u$, $e(u,u_h)$ represents a weighted $L^2$-error for the normal vectors of the corresponding graphs given by $u$ and $u_h$, where the weight is the average of the area
elements of the graphs of $u$ and $u_h$. An estimate for $e(u,u_h)$ was derived by Fierro and Veeser \cite{FierroVeeser03} in the framework of a posteriori error estimation for prescribed mean curvature equations. Geometric notions of errors like $e(u,u_h)$ have also been considered in the setting of mean curvature flows \cite{DeckelnickDziuk00, DeDzEl05} and surface diffusion \cite{BaMoNo04}.

For the nonlocal minimal surface problem, let $u$ and $u_h$ be the solutions to \eqref{E:WeakForm-NMS-Graph} and \eqref{E:WeakForm-discrete}, respectively. We introduce the quantity
\begin{equation}\label{eq:def-es} 
{
\begin{aligned}
e_s(u,u_h) := \l( C_{d,s} \iint_{Q_{\Omega}} \Big( G_s\l(d_u(x,y)\r) - G_s\l(d_{u_h}(x,y)\r) \Big) \frac{d_{u-u_h}(x,y)}{|x-y|^{d-1+2s}} dxdy \r)^{1/2} ,
\end{aligned}}
\end{equation}
where $G_s$ is given by \eqref{E:DEF-Gs}, the constant $C_{d,s} = \frac{1 - 2s}{\alpha_{d}}$, with $\alpha_{d}$ denoting the volume of the $d$-dimensional unit ball and, for any function $v$, $d_v(x,y)$ is defined as
\begin{equation}\label{eq:def-d_u}
d_v(x,y) := \frac{v(x)-v(y)}{|x-y|}.
\end{equation}
The term in parenthesis in \eqref{eq:def-es} is non-negative because $G_s$ is non-decreasing on $\mathbb{R}$. We include the constant $C_{d,s}$ in the definition of $e_s$ in order to have asymptotic compatibility in the limit $s \to \frac12^-$ (cf. \Cref{Thm:asymptotics-es} below). 

\Cref{ss:error} derives an estimate for $e_s(u,u_h)$ that does not rely on regularity assumptions. Although the proof of such an error estimate is simple, providing an interpretation of the quantity $e_s$ is not a straightforward task. Thus, in \Cref{ss:asymptotics} we study the behavior of $e_s$ and related quantities in the limit $s \to \frac12^-$.

\subsection{Error estimate} \label{ss:error}
In this section we derive an upper bound for the geometric discrepancy $e_s(u,u_h)$ between the continuous and discrete minimizers $u$ and $u_h$, without additional assumptions on the regularity of $u$. More precisely, the next theorem states that $e_s(u,u_h)$ can be bounded in terms of the approximability of $u$ by the discrete spaces $\mathbb{V}^g_h$ in terms of the $\mathbb{V}^g$-seminorm.

\begin{Theorem}[geometric error]\label{thm:geometric-error}
Let $s \in (0,1/2)$ and let $\Omega$ and $g$ satisfy \eqref{E:assumptions} and \eqref{E:support}. Let $u$ and $u_h$ be the solutions to \eqref{E:WeakForm-NMS-Graph} and \eqref{E:WeakForm-discrete} respectively. Then, it holds that
\begin{equation} \label{eq:geometric-error} \begin{aligned}
e_s(u,u_h) &\le \inf_{v_h \in \mathbb{V}_h^g} \sqrt{2 C_{d,s} K} \ | u-v_h |_{\mathbb{V}^g} \\
& = \inf_{v_h \in \mathbb{V}_h^g}  \l( 2 C_{d,s} K \iint_{Q_{\Omega}} \frac{|(u-v_h)(x)-(u-v_h)(y)|}{|x-y|^{d+2s}} dxdy \r) ^{1/2},
\end{aligned} \end{equation}
where $K$ is the constant from \eqref{E:bound-Gs}.
\end{Theorem}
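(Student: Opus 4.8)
The plan is a short Galerkin-orthogonality argument, the point being that the integrand defining $e_s^2$ is precisely a difference of the two weak forms \eqref{E:WeakForm-NMS-Graph} and \eqref{E:WeakForm-discrete} tested against the same increment. First I would recast the form in the shape of \eqref{eq:def-es}: since $\rho\,\Gts(\rho) = G_s(\rho)$ and, by \eqref{eq:def-d_u}, $u(x)-u(y) = d_u(x,y)\,|x-y|$, we have for any test function $v$
\[
a_u(u,v) = \iint_{Q_\Omega} G_s\bigl(d_u(x,y)\bigr)\,\frac{d_v(x,y)}{|x-y|^{d-1+2s}}\,dxdy ,
\]
and likewise with $u$ replaced by $u_h$. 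Since $v\mapsto d_v$ is linear, $d_{u-u_h} = d_u - d_{u_h}$, so \eqref{eq:def-es} reads $e_s(u,u_h)^2 = C_{d,s}\bigl( a_u(u,u-u_h) - a_{u_h}(u_h,u-u_h)\bigr)$. (That this quantity is finite and nonnegative follows, respectively, from the bound $|G_s|\le K$ of \eqref{E:bound-Gs} and from the monotonicity of $G_s$ together with $C_{d,s}>0$, valid since $s<1/2$.)

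Next I would fix an arbitrary $v_h \in \mathbb{V}_h^g$ and split $u-u_h = (u-v_h) + (v_h-u_h)$. Because $v_h$ and $u_h$ carry the same exterior data $\Cl^c g$, the increment $v_h - u_h$ vanishes on $\Omega^c$ and belongs to $\mathbb{V}_h^0 \subset \mathbb{V}^0$; hence it is an admissible test function in \emph{both} \eqref{E:WeakForm-NMS-Graph} and \eqref{E:WeakForm-discrete}, giving $a_u(u,v_h-u_h) = 0 = a_{u_h}(u_h,v_h-u_h)$. Using linearity in the second argument we obtain
\[
e_s(u,u_h)^2 = C_{d,s}\bigl( a_u(u,u-v_h) - a_{u_h}(u_h,u-v_h)\bigr) = C_{d,s}\iint_{Q_\Omega} \bigl(G_s(d_u) - G_s(d_{u_h})\bigr)\,\frac{d_{u-v_h}}{|x-y|^{d-1+2s}}\,dxdy .
\]
It is worth stressing that $u-v_h$ is \emph{not} a valid test function — it equals $g - \Cl^c g \neq 0$ on $\Omega^c$ — but that is irrelevant now, since it enters only inside an absolute-value estimate.

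To finish, I would bound $|G_s(d_u) - G_s(d_{u_h})| \le 2K$ using \eqref{E:bound-Gs} and recognize that $|d_{u-v_h}(x,y)|\,|x-y|^{-(d-1+2s)} = |(u-v_h)(x)-(u-v_h)(y)|\,|x-y|^{-(d+2s)}$, so that
\[
e_s(u,u_h)^2 \le 2 C_{d,s} K \iint_{Q_\Omega}\frac{|(u-v_h)(x)-(u-v_h)(y)|}{|x-y|^{d+2s}}\,dxdy = 2 C_{d,s} K\,|u-v_h|_{\mathbb{V}^g} ,
\]
and taking the infimum over $v_h \in \mathbb{V}_h^g$ and a square root yields \eqref{eq:geometric-error}. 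I do not expect a genuine obstacle here — as the surrounding text notes, the estimate is essentially one line once the form is in the right shape — the only delicate point being the bookkeeping of which increment ($v_h-u_h$, yes; $u-v_h$, no) may legitimately be inserted into the equations, and, if one wants a fully clean statement, the auxiliary observation that all quantities in sight are finite.
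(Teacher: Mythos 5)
Your proof is correct and follows essentially the same Galerkin-orthogonality argument as the paper: test both \eqref{E:WeakForm-NMS-Graph} and \eqref{E:WeakForm-discrete} with $u_h - v_h \in \mathbb{V}_h^0$ to replace $d_{u_h}$ by $d_{v_h}$ in the second factor of $e_s^2$, then bound $|G_s| \le K$. Your remarks about which increments are admissible test functions and about finiteness are correct bookkeeping but do not change the route.
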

\begin{proof}
The proof follows by `Galerkin orthogonality'. Indeed, let $v_h \in \mathbb{V}_h^g$ and use $u_h - v_h$ as test function in \eqref{E:WeakForm-NMS-Graph} and \eqref{E:WeakForm-discrete} to obtain
\[
\iint_{Q_{\Omega}} \Big( G_s\l(d_u(x,y)\r) - G_s\l(d_{u_h}(x,y)\r) \Big) \; \frac{d_{u_h}(x,y) - d_{v_h}(x,y)}{|x-y|^{d-1+2s}} dxdy = 0.
\]
The identity above immediately implies that
\begin{equation} \label{eq:orthogonality-es} \begin{aligned}
e^2_s(u,u_h)
&= C_{d,s} \iint_{Q_{\Omega}} \Big( G_s\l(d_u(x,y)\r) - G_s\l(d_{u_h}(x,y)\r) \Big) \frac{d_u(x,y) - d_{u_h}(x,y)}{|x-y|^{d-1+2s}} dxdy \\
&= C_{d,s} \iint_{Q_{\Omega}} \Big( G_s\l(d_u(x,y)\r) - G_s\l(d_{u_h}(x,y)\r) \Big) \frac{d_u(x,y) - d_{v_h}(x,y)}{|x-y|^{d-1+2s}} dxdy.
\end{aligned}
\end{equation} 
Estimate \eqref{eq:geometric-error} follows immediately from the bound $|G_s| \le K$ (cf. \eqref{E:bound-Gs}).
\end{proof}

In case the fractional minimal graph possesses additional regularity, a convergence rate follows straightforwardly by applying \Cref{prop:interpolation_estimate-2}.

\begin{Corollary}[convergence rate]
Let the same conditions as in \Cref{thm:geometric-error} be valid and further assume that $u \in W^t_1(\Lambda)$ for some $t \in (2s,2]$, where $\Lambda$ is given by \eqref{E:support}. Then,
\[
e_s(u,u_h) \lesssim h^{t/2-s} |u|^{1/2}_{W^t_1(\Lambda)} .
\]
\end{Corollary}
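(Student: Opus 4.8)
The plan is to combine the abstract error estimate from \Cref{thm:geometric-error} with the interpolation rate from \Cref{prop:interpolation_estimate-2}. Since \Cref{thm:geometric-error} provides
\[
e_s(u,u_h) \le \sqrt{2 C_{d,s} K} \inf_{v_h \in \mathbb{V}_h^g} | u - v_h |_{\mathbb{V}^g}^{1/2},
\]
it suffices to exhibit a single admissible choice of $v_h \in \mathbb{V}_h^g$ whose $\mathbb{V}^g$-seminorm distance to $u$ decays at the rate $h^{t-2s}$. The natural candidate is $v_h = \interp u$, the quasi-interpolant defined in \eqref{E:interpolation}, which by construction lies in $\mathbb{V}_h^g$.

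First I would recall that $| u - \interp u |_{\mathbb{V}^g}$ is exactly the quantity
\[
\iint_{Q_{\Omega}} \frac{\big| (\interp u - u)(x) - (\interp u - u)(y) \big|}{|x-y|^{d+2s}} \, dxdy,
\]
which is the left-hand side of the estimate in \Cref{prop:interpolation_estimate-2} with $p = 1$ and the fractional exponent there equal to $2s$ (noting that $2s \in (0,1)$ since $s \in (0,1/2)$, so the hypothesis $sp < 1$ of that proposition, read with the relevant exponent, is satisfied). Applying \Cref{prop:interpolation_estimate-2} with $p=1$ and the smoothness exponent $t \in (2s,2]$ — which is legitimate precisely because we have assumed $u \in W^t_1(\Lambda)$ — yields
\[
| u - \interp u |_{\mathbb{V}^g} \lesssim h^{t-2s} \, |u|_{W^t_1(\Lambda)}.
\]
Substituting this into the bound from \Cref{thm:geometric-error} and taking square roots gives
\[
e_s(u,u_h) \le \sqrt{2 C_{d,s} K} \, | u - \interp u |_{\mathbb{V}^g}^{1/2} \lesssim h^{(t-2s)/2} \, |u|_{W^t_1(\Lambda)}^{1/2} = h^{t/2 - s} \, |u|_{W^t_1(\Lambda)}^{1/2},
\]
which is the claimed estimate, with the implicit constant absorbing $\sqrt{2 C_{d,s} K}$ and the interpolation constant.

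There is essentially no serious obstacle here: the corollary is a direct concatenation of two already-established results, and the only point requiring minor care is the bookkeeping of the Sobolev exponents — namely checking that \Cref{prop:interpolation_estimate-2} applies with $p=1$ and the fractional-smoothness parameter set to $2s$ rather than $s$, so that the kernel singularity $|x-y|^{-(d+2s)}$ in the $\mathbb{V}^g$-seminorm matches the $|x-y|^{-(d+sp)}$ appearing there. Once that identification is made, the algebra $(t-2s)/2 = t/2 - s$ is immediate. One could alternatively phrase the whole argument without naming $\interp u$ explicitly, simply by noting that the infimum in \eqref{eq:geometric-error} is bounded above by the value at $v_h = \interp u$; I would include that one-line remark for clarity.
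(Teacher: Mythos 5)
Your proof is correct and takes exactly the route the paper intends: the paper states that the corollary ``follows straightforwardly by applying Proposition~\ref{prop:interpolation_estimate-2},'' and you have supplied precisely that computation, including the necessary bookkeeping that the fractional exponent must be read as $2s$ (with $p=1$) to match the kernel $|x-y|^{-(d+2s)}$ in the $\mathbb{V}^g$-seminorm, and the verification that the admissibility condition $2s\cdot 1 < 1$ holds since $s<1/2$.
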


\begin{Remark}[BV regularity]
Although minimal graphs are expected to be discontinuous across the boundary, they are smooth in the interior of $\Omega$ and, naturally, possess the same regularity as the datum $g$ over $\Omega^c$. Therefore, in general, we expect that $u \in BV(\Lambda)$ whence the error estimate
\[
e_s(u,u_h) \lesssim h^{1/2-s} |u|^{1/2}_{BV(\Lambda)}.
\]
\end{Remark}

\subsection{Asymptotic behavior} \label{ss:asymptotics}
Our goal in this section is to show that,  for $u$ and $v$ smooth enough, $e_s(u,v)$ converges to the geometric notion of error $e(u,v)$ defined in \eqref{eq:def-e} in the limit $s \to {\frac{1}{2}}^-$. To this aim, we first introduce a nonlocal normal vector. 

\begin{Definition}[nonlocal normal vector] \label{def:normal-vector}
Let $s \in (0, 1/2)$ and $E \subset \mRd$ be an open, bounded, measurable set. The nonlocal inward normal vector of order $s$ at a point $x \in \partial E$ is defined as
\begin{equation}\label{eq:def-nonlocal-normal}
    \nu_s(x;E) = \frac{C_{d-1,s}}{2} \; \lim_{R \to \infty} \int_{B_R(x)}
     \frac{\chi_E(y) - \chi_{E^c}(y)}{|x-y|^{d+2s}} \; (y - x) \; dy,
\end{equation}
where $C_{d-1,s} = \frac{1-2s}{\alpha_{d-1}}$ as in \eqref{eq:def-es}, except that $d$ is replaced by $d-1$.
\end{Definition}

\begin{Remark}[dimensions]
We point out that, definition \eqref{eq:def-es} aims to measure the normal vector discrepancies over graphs in $\mRdp$, whereas \Cref{def:normal-vector} deals with the normal vector to a subset of $\mRd$. This is why in \eqref{eq:def-nonlocal-normal} we use the constant $C_{d-1,s}$ instead of $C_{d,s}$.
\end{Remark}

Notice that, by symmetry, 
\begin{equation*}
\int_{\partial B_R(x)} \frac{y - x}{|x-y|^{d+2s}} dS(y) = 0 \quad \forall R>0.
\end{equation*}
Consequently, because $\chi_{E^c} = 1 - \chi_{E}$, if $E \subset B_R(x)$ for some $R > 0$, then
\begin{equation*} \begin{aligned}
    \nu_s(x;E) & = \frac{C_{d-1,s}}{2} \int_{B_R(x)}
     \frac{\chi_E(y) - \chi_{E^c}(y)}{|x-y|^{d+2s}} (y - x) \; dy \\
     & = C_{d-1,s} \int_{B_R(x)}
     \frac{\chi_E(y)}{|x-y|^{d+2s}} (y - x) \; dy.
\end{aligned} \end{equation*}

The following lemma justifies that the nonlocal normal vector defined in \eqref{eq:def-nonlocal-normal} is indeed an extension of the classical notion of normal vector. The scaling factor in the definition of $\nu_s$ yields the convergence to the normal derivative as $s \to \frac12^-$.

\begin{Lemma}[asymptotic behavior of $\nu_s$]\label{lem:asymp-nonlocal-normal}
Let $E$ be a bounded set in $\mRd$, $x$ be a point on $\partial E$, the surface $\partial E$ be locally $C^{1,\gamma}$ for some $\gamma > 0$ and $\nu(x)$ be the inward normal vector to $\partial E$ at $x$. Then, the following holds:
\begin{equation}\label{eq:asymp-nonlocal-normal}
\lim_{s \to {\frac{1}{2}}^-}  \nu_s(x;E) = \nu(x) .
\end{equation}
\end{Lemma}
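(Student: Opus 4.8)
The plan is to reduce the statement to a local computation near $x$, exploiting the $C^{1,\gamma}$ regularity of $\partial E$ together with the explicit scaling constant $C_{d-1,s}$. Without loss of generality I would place $x$ at the origin and rotate coordinates so that $\nu(x) = e_d$, so that locally $\partial E$ is the graph $\{(x',\phi(x')) : |x'| < r_0\}$ of a $C^{1,\gamma}$ function $\phi$ with $\phi(0) = 0$ and $\nabla\phi(0) = 0$. Since $E$ is bounded and $\partial B_R(x)$ contributes nothing by symmetry, we may write $\nu_s(x;E) = C_{d-1,s}\int_{\mathbb{R}^d} \frac{\chi_E(y)}{|y|^{d+2s}}\, y\, dy$ (interpreting the integral as the principal value / symmetric limit; the far-field part $\{|y| > r_0\}$ contributes $O(1)\cdot C_{d-1,s} = O(1-2s) \to 0$ as $s \to \frac12^-$, because there the integrand is bounded and integrable). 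Hence the whole mass concentrates, in the limit, in the ball $B_{r_0}$.

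Next I would split $B_{r_0} = (B_{r_0} \cap E) \cup (B_{r_0} \cap E^c)$ and use the graph representation. The key asymmetry is in the $y_d$-direction: for each fixed $y'$ with $|y'|$ small, the vertical slice of $E$ near the boundary is $\{y_d < \phi(y')\}$, so $\int \frac{\chi_E(y) - \chi_{E^c}(y)}{|y|^{d+2s}} e_d \cdot (y\cdot e_d)\,dy$ picks up a net contribution proportional to $\phi(y')$. The tangential components $y'$ integrate to something that vanishes: to leading order the set is symmetric in $y'$ (since $\nabla\phi(0)=0$ gives $\phi(y') = o(|y'|)$), and the $C^{1,\gamma}$ bound $|\phi(y')| \lesssim |y'|^{1+\gamma}$ makes the tangential integral converge to $0$ after multiplication by $C_{d-1,s}$. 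For the normal component, one computes, using the substitution to the half-space model, that
\[
C_{d-1,s}\int_{\{|y'|<r_0,\ 0 < y_d < \phi(y')\}} \frac{y_d\,dy}{|y|^{d+2s}} - C_{d-1,s}\int_{\{|y'|<r_0,\ \phi(y') < y_d < 0\}} \frac{(-y_d)\,dy}{|y|^{d+2s}}
\]
tends to $1$ as $s \to \frac12^-$; this is exactly the normalization that the constant $C_{d-1,s} = \frac{1-2s}{\alpha_{d-1}}$ is designed to produce, and it is the same kind of computation that shows a fractional perimeter $\Gamma$-converges to the classical one. The cleanest way to see it is to note that, after the change of variables and using $\phi(y') \approx 0$, the integral reduces to $C_{d-1,s}\int_{B'_{r_0}} \Big(\int_0^{\phi(y')} \frac{y_d}{(|y'|^2+y_d^2)^{(d+2s)/2}} dy_d + (\text{symmetric term})\Big) dy'$; Taylor expanding in $\phi$ and integrating in $y'$ produces a constant of the form $\frac{C_{d-1,s}\,\omega_{d-2}}{\text{something}} \int_0^\infty \frac{\rho^{d-2}}{(1+\rho^2)^{(d+2s)/2}}\,d\rho \cdot (\text{linear functional of }\phi\text{ evaluated at }\nabla\phi(0))$, and as $s\to\frac12^-$ the prefactor $(1-2s)$ cancels the divergence of the $\rho$-integral, leaving precisely the Euclidean normal direction with unit length.

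The main obstacle I expect is controlling the error terms uniformly in $s$: one must show that the difference between $\nu_s(x;E)$ and the half-space model quantity is $o(1)$ as $s \to \frac12^-$, and the delicate point is that $C_{d-1,s} \sim (1-2s)$ is going to zero exactly as fast as certain integrals blow up, so sloppy bounds that lose a logarithm or a power are fatal. Concretely, the near-diagonal region $|y'| \lesssim \delta$ requires the Hölder modulus $|\phi(y') - \phi(0) - \nabla\phi(0)\cdot y'| \le [\nabla\phi]_\gamma |y'|^{1+\gamma}$ to beat the singularity, and one needs that the resulting $\int_0 |y'|^{1+\gamma - (d+2s) + (d-1)}\,d|y'|$-type integral, once multiplied by $(1-2s)$, is both finite and $\to 0$; this works because $\gamma > 0$. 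I would organize this as: (i) far-field negligible, (ii) tangential components negligible via the symmetry/Hölder estimate, (iii) normal component equals the half-space value up to a Hölder-controlled error, (iv) the half-space value converges to $1$ by an explicit Beta-function computation analogous to \eqref{E:bound-Gs}. Steps (ii) and (iii) are where the $(1-2s)$-versus-singularity bookkeeping has to be done carefully; everything else is routine.
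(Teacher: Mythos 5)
Your plan matches the paper's proof in structure and in all the key ideas: localize near $x$, show the far field contributes $O(1)$ so that multiplication by $C_{d-1,s}\sim(1-2s)$ kills it, compare $E$ to the tangent half-space with the $C^{1,\gamma}$ modulus controlling the discrepancy, and compute the half-space value explicitly to get $1$. Your organizing summary (i)--(iv) is exactly the paper's decomposition (the paper bundles your (ii) and (iii) into a single symmetric-difference estimate $\int_{\partial B_r}\chi_{E\triangle\widetilde E}\lesssim r^{d+\gamma-1}$, but that is a cosmetic difference).

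One slip worth flagging: the displayed quantity in your middle paragraph --- the two integrals over the lune regions $\{0<y_d<\phi(y')\}$ and $\{\phi(y')<y_d<0\}$ --- is the \emph{correction} term between $E$ and the flat tangent half-space, and it tends to $0$, not to $1$. What tends to $1$ is the integral of $y_d/|y|^{d+2s}$ over the full half-ball $B_{r_0}\cap\{y_d>0\}$, normalized by $C_{d-1,s}$; the $(1-2s)$ factor cancels the $\int_0^{r_0}\rho^{-2s}\,d\rho\sim\frac{1}{1-2s}$ divergence that arises after integrating out the normal variable, not a divergence of the angular Beta-type integral (which is convergent). Your own summary items (iii) and (iv) already state this correctly, so the error is localized to that one sentence, but it would derail the computation if carried out literally.
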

\begin{proof}
Without loss of generality, we assume $x = 0$. Let $\widetilde{E} := \{y: y \cdot \nu(x) \ge 0\}$ and for simplicity we write $B_r = B_r(x)$. Then, since $\partial E$ is locally $C^{1,\gamma}$, there exists some $r_0 > 0$ such that
\begin{equation}\label{eq:proof-asymp-nonlocal-normal}
 \left| \int_{\partial B_r} \chi_{E \triangle \widetilde{E}}(y) \; dS(y) \right| \lesssim r^{d+\gamma-1}
\end{equation}
for any $r \in (0,r_0]$, where $\triangle$ denotes the symmetric difference between sets. Fix $R > r_0$ large enough so that $E \subset B_R(x)$. Then, we can write $\nu_s(x;E)$ as
\begin{equation}\label{eq:proof-asymp-nonlocal-normal-2}
\begin{aligned}
\nu_s(x;E) &= C_{d-1,s} \int_{B_R} \frac{\chi_E(y)}{|y|^{d+2s}} \; y \; dy \\
&= C_{d-1,s} \l(\int_{B_R \setminus B_{r_0}} \frac{\chi_E(y)}{|y|^{d+2s}} \; y \; dy + \int_{B_{r_0}} \frac{\chi_E(y)}{|y|^{d+2s}} \; y \; dy \r).
\end{aligned}
\end{equation}
For the first integral in the right hand side, since the surface area of the $(d-1)$-dimensional unit ball equals $d \alpha_d$, we have
\[ \begin{aligned}
\l| \int_{B_R \setminus B_{r_0}} \frac{\chi_E(y)}{|y|^{d+2s}} \; y \; dy \r| & = \left| \int_{r_0}^R dr \int_{\partial B_r} \frac{\chi_E(y)}{r^{d+2s}} \; y \; dS(y) \right| \\
& \le \int_{r_0}^R dr \int_{\partial B_r} \frac{1}{r^{d+2s}} \; r^d \; dS = d \alpha_d \int_{r_0}^R  r^{-2s} \; dr \\
& =  \frac{d \, \alpha_d}{1-2s} (R^{1-2s} - r_0^{1-2s}).
\end{aligned} \]
Therefore, in the limit $s\to\frac12^-$, we obtain 
\begin{equation} \label{eq:lim-E-far}
C_{d-1,s} \l| \int_{B_R \setminus B_{r_0}} \frac{\chi_E(y)}{|y|^{d+2s}} \; y \; dy \r| \le
\frac{d \alpha_d}{\alpha_{d-1}} \l(R^{1-2s} - r_0^{1-2s} \r) \to 0.
\end{equation}

We now deal with the second term in the right hand side in \eqref{eq:proof-asymp-nonlocal-normal-2}. Without loss of generality, we additionally assume $\nu(x) = e_1$.
If we replace $E$ by the set $\widetilde{E}$ defined above, that coincides with the half-space $\{ y \colon y_1 \ge 0 \}$, it follows by symmetry that all components but the first one in the integral vanish. The first component can be calculated explicitly by writing it as an iterated integral along the $(d-1)$-dimensional slices $\Pi_t = \{ y_1 = t \}$ and integrating in polar coordinates on these:
\[ \begin{aligned}
\int_{B_{r_0}} \frac{\chi_{\widetilde{E}}(y)}{|y|^{d+2s}} \; y_1 \; dy & =
\int_0^{r_0} dt \int_{\Pi_t \cap \{ |z|^2 \le r_0^2 - t^2\} } \frac{t}{\l( t^2 + |z|^2 \r)^\frac{d+2s}{2}} \; dz \\
& = \int_0^{r_0} dt \int_0^{\sqrt{r_0^2 - t^2}} dr \int_{\partial B^{(d-2)}_r} \frac{t}{\l( t^2 + r^2 \r)^\frac{d+2s}{2}} \; dS(z) \\
& = (d-1) \alpha_{d-1} \int_0^{r_0} dt \int_0^{\sqrt{r_0^2 - t^2}} \frac{t r^{d-2}}{\l( t^2 + r^2 \r)^\frac{d+2s}{2}} \; dr .
\end{aligned} \]

The iterated integral above can be calculated with elementary manipulations (Fubini's theorem, change of variables $t \mapsto w = \l(\frac{t}{r}\r)^2$ and explicit computation of integrals) to give
\[
\int_0^{r_0} dt \int_0^{\sqrt{r_0^2 - t^2}} \frac{t r^{d-2}}{\l( t^2 + r^2 \r)^\frac{d+2s}{2}} \; dr = \frac{r_0^{1-2s}}{(d-1)(1-2s)},
\]
and therefore, as $s \to \frac12^-$,
\[
C_{d-1,s} \int_{B_{r_0}} \frac{\chi_{\widetilde{E}}(y)}{|y|^{d+2s}} \; y_1 \; dy = C_{d-1,s} \; (d-1) \alpha_{d-1} \frac{r_0^{1-2s}}{(d-1)(1-2s)} = r_0^{1-2s} \to 1.
\]
This shows that
\begin{equation} \label{eq:lim-E-tilde}
\lim_{s \to \frac12^-} \int_{B_{r_0}} \frac{2\chi_{\widetilde{E}}(y)}{|y|^{d+2s}} \; y_1 \; dy = \nu(x) .
\end{equation} 

Using \eqref{eq:proof-asymp-nonlocal-normal}, the difference between the integrals over $E$ and $\widetilde{E}$ can be bounded as
\[ \begin{aligned}
C_{d-1,s} \left| \int_{B_{r_0}} \frac{\chi_{E} (y) - \chi_{\widetilde{E}}(y)}{|y|^{d+2s}} \; y \; dy  \right|
& \le C_{d-1,s} \int_0^{r_0} dr \int_{\partial B_r} \chi_{E \triangle \widetilde{E}}(y) \; r^{-d-2s} \; r \; dS(y) \\
& \lesssim C_{d-1,s} \int_0^{r_0} r^{\gamma-2s} \;  dr = C_{d-1,s}  \; \frac{r_0^{\gamma+1-2s}}{\gamma+1-2s},
\end{aligned} \]
where the right hand side above tends to $0$ because $C(d-1,s) = \frac{1-2s}{\alpha_{d-1}}$ and $\gamma > 0$ is fixed.  Combining this estimate with \eqref{eq:proof-asymp-nonlocal-normal-2}, \eqref{eq:lim-E-far} and \eqref{eq:lim-E-tilde}, we finally get
\[
\lim_{s \to {\frac{1}{2}}^-} \nu_s(x;E)
= \nu(x),
\]
thereby finishing the proof.
\end{proof}

\begin{Remark}[localization]\label{rem:asymp-nonlocal-normal}
From the preceding proof, it follows that only the part of the integral near $x$ remains in the limit when $s \to {\frac{1}{2}}^-$. 
Thus, for any neighborhood $\mathcal{N}_x$ of $x$, we could similarly prove
\[
\lim_{s \to {\frac{1}{2}}^-} \frac{C_{d-1,s}}{2} \; \int_{\mathcal{N}_x} \frac{\chi_E(y) - \chi_{E^c}(y)}{|x-y|^{d+2s}} \; (y - x) \; dy = \nu(x)
\]
without the assumption of the boundedness of $E$.
\end{Remark}

We now go to the graph setting and consider 
\[
E = \left\{ (x, x_{d+1}): x_{d+1} \le u(x), x \in \mRd \right\} \subset \mRdp,
\] 
where $u \in L^{\infty}(\mRd)$. For such a set $E$ it is clear that our definition \eqref{eq:def-nonlocal-normal} is not adequate: the limit of the integral therein does not exist. However, the only issue in such a definition is that the last component of the nonlocal normal vector in $\mRdp$ tends to $-\infty$, and thus it can be solved in a simple way. Indeed, we introduce the projection operator $P$ that maps 
\[
\mRdp \ni \widetilde{x} = (x,x_{d+1}) \mapsto P(\widetilde{x}) = x \in \mRd.
\] 
Then we could actually define the normal vector for this type of unbounded set $E$ as the projection $P\left(\nu_s(x;E)\right)$. 

More precisely, given $\widetilde{x} = \left(x,u(x) \right)$, we define the projection of nonlocal normal vector, $\widetilde{\nu}_s(\widetilde{x};E) = P\left(\nu_s(\widetilde{x};E)\right)$, as
\begin{equation}\label{eq:def-nonlocal-normal-proj}
    \widetilde{\nu}_s(\widetilde{x};E) = \frac{C_{d,s}}{2} \lim_{R \to \infty} \int_{B_R(\widetilde{x})}
     \frac{\chi_E(\widetilde{y}) - \chi_{E^c}(\widetilde{y})}{|\widetilde{x}-\widetilde{y}|^{d+1+2s}} \; P(\widetilde{y} - \widetilde{x}) \; \dty,
\end{equation}
where $x = P(\widetilde{x})$ and $y = P(\widetilde{y})$.
To show that this limit exists, consider the sets
\[ \begin{aligned}
& B_R^+\left(\widetilde{x}\right) := \l\{ \widetilde{y} = (y,y_{d+1}) \in \mRdp : |\widetilde{y} - \widetilde{x}| \le R, \; y_{d+1} \ge u(x) \r\}, \\
& B_R^-(\widetilde{x}) := B_R(\widetilde{x}) \setminus B_R^+(\widetilde{x}).
\end{aligned}\]
Since both $B_R^+\l(\widetilde{x}\r)$ and $B_R^-\l(\widetilde{x}\r)$ are half balls, by  symmetric cancellation, we have
\[
\int_{B_R^+(\widetilde{x})}  \frac{1}{|\widetilde{x}-\widetilde{y}|^{d+1+2s}} \dty  - \int_{B_R^-(\widetilde{x})}  \frac{1}{|\widetilde{x}-\widetilde{y}|^{d+1+2s}} \dty  = 0
\]
in the principal value sense. Therefore, using that $\chi_{E} = 1 - \chi_{E^c}$, we can express
\[\begin{aligned}
\int_{B_R(\widetilde{x})} \frac{\chi_E(\widetilde{y}) - \chi_{E^c}(\widetilde{y})}{|\widetilde{x}-\widetilde{y}|^{d+1+2s}} & \; P(\widetilde{y} - \widetilde{x}) \; \dty \\
     & = 2\int_{B_R^+(\widetilde{x}) \cap E}
     \frac{(y - x)}{|\widetilde{x}-\widetilde{y}|^{d+1+2s}} \dty - 2\int_{B_R^-(\widetilde{x}) \setminus E}
     \frac{(y - x)}{|\widetilde{x}-\widetilde{y}|^{d+1+2s}} \dty.
\end{aligned} \]
The two integrands above have enough decay at infinity because we are assuming $u \in L^\infty(\mRd)$. Thus, as $R \to \infty$, we may replace $B_R^-(\widetilde{x})$ by the  half space $H^-\l(\widetilde{x}\r) :=  \{(y,y_{d+1}) \in \mRdp : y_{d+1} < u(x)\}$. Thus, the vector defined in \eqref{eq:def-nonlocal-normal-proj} can be written as
\[ \begin{aligned}
\widetilde{\nu}_s(\widetilde{x};E) &= C_{d,s} \int_{E \setminus H^-\l(\widetilde{x}\r)}
     \frac{(y-x)}{|\widetilde{x}-\widetilde{y}|^{d+1+2s}} \dty - C_{d,s} \int_{H^-\l(\widetilde{x}\r) \setminus E}
     \frac{(y-x)}{|\widetilde{x}-\widetilde{y}|^{d+1+2s}} \dty \\
     &= C_{d,s} \int_{\mRd} dy \int_{u(x)}^{u(y)} \frac{(y-x)}{\left(|x-y|^2+(y_{d+1}-u(x))^2 \right)^{\frac{d+1+2s}{2}}} \; dy_{d+1} .
\end{aligned} \]

Making the substitution $t = \frac{y_{d+1}-u(x)}{|x-y|}$, recalling the definitions of $G_s$ and $d_u$ (cf. \eqref{E:DEF-Gs} and \eqref{eq:def-d_u}, respectively), and noticing that $d_u(x,y) = -d_u(y,x)$, we conclude that
\[ \begin{aligned}
\widetilde{\nu}_s(\widetilde{x};E) & = C_{d,s} \int_{\mRd} (y-x) \; dy \int_{0}^{\frac{u(y)-u(x)}{|x-y|}} \frac{1}{|x-y|^{d+2s} \left( 1+t^2 \right)^{\frac{d+1+2s}{2}}} \; dt  \\
     &= C_{d,s} \int_{\mRd} \frac{G_s \left(d_u(x,y) \right)}{|x-y|^{d+1+2s}} \; (x-y) \; dy.
\end{aligned} \]

As we mentioned above, $\widetilde{\nu}_s(\widetilde{x};E)$ can be regarded as the projection of $\nu_s(\widetilde{x};E)$ under $P$. Therefore, following similar steps as in \Cref{lem:asymp-nonlocal-normal} and \Cref{rem:asymp-nonlocal-normal}, it is possible to prove the following result.

\begin{Lemma}[asymptotics of $\widetilde{\nu}_s$]\label{lem:asymp-nonlocal-normal-proj}
Let $E = \left\{ (x, x_{d+1}): x_{d+1} \le u(x), x \in \mRd \right\}$, where $u \in L^{\infty}(\mRd)$ and $u$ is locally $C^{1,\gamma}$ around a point $x$ for some $\gamma > 0$. Then, the following asymptotic behavior holds
\begin{equation}\label{eq:asymp-nonlocal-normal-proj}
\begin{aligned}
\lim_{s \to {\frac{1}{2}}^-} \widetilde{\nu}_s(\widetilde{x};E) &= 
\lim_{s \to {\frac{1}{2}}^-} C_{d,s} \int_{\mRd} \frac{G_s \left(d_u(x,y) \right)}{|x-y|^{d+2s}} \; (x-y) \; dy \\
&= \frac{\nabla u(x)}{\sqrt{1+|\nabla u(x)|^2}},
\end{aligned}
\end{equation}
where $\widetilde{x} = (x,u(x))$. In addition, we also have
\begin{equation}\label{eq:asymp-nonlocal-normal-proj2}
\lim_{s \to {\frac{1}{2}}^-} C_{d,s} \int_{\mathcal{N}_x} \frac{G_s \left(d_u(x,y) \right)}{|x-y|^{d+2s}} \; (x-y) \; dy
= \frac{\nabla u(x)}{\sqrt{1+|\nabla u(x)|^2}},
\end{equation}
for any neighborhood $\mathcal{N}_x$ of $x$.
\end{Lemma}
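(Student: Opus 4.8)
The plan is to work with the explicit representation obtained just before the statement, $\widetilde{\nu}_s(\widetilde{x};E) = C_{d,s} \int_{\mRd} \frac{G_s(d_u(x,y))}{|x-y|^{d+2s}}(x-y)\,dy$, and to prove the localized identity \eqref{eq:asymp-nonlocal-normal-proj2} for a small ball $\mathcal{N}_x = B_{r_0}(x)$ on which $u$ is $C^{1,\gamma}$; since for an arbitrary neighborhood $\mathcal{N}_x$ (in particular for $\mathcal{N}_x = \mRd$, which recovers \eqref{eq:asymp-nonlocal-normal-proj}) the difference between the integral over $\mathcal{N}_x$ and the integral over $B_{r_0}(x)$ is a ``tail'' integral over $\mRd \setminus B_{r_0}(x)$, it suffices to show that such a tail is negligible in the limit. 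Concretely, for any measurable $A \subset \mRd \setminus B_{r_0}(x)$,
\[
C_{d,s}\left| \int_A \frac{G_s(d_u(x,y))}{|x-y|^{d+2s}}(x-y)\,dy \right| \le C_{d,s} \int_{\mRd \setminus B_{r_0}(x)} \frac{|G_s(d_u(x,y))|}{|x-y|^{d+2s-1}}\,dy,
\]
and I claim the right-hand side tends to $0$ as $s \to \tfrac12^-$. Splitting into a fixed annulus $B_{R_0}(x) \setminus B_{r_0}(x)$, where $|G_s| \le K$ with $K$ bounded uniformly for $s$ near $\tfrac12$, gives a contribution $\lesssim C_{d,s}\int_{r_0}^{R_0} r^{-2s}\,dr = \frac{K\w_{d-1}}{\alpha_d}\big(R_0^{1-2s}-r_0^{1-2s}\big) \to 0$; on the exterior $\mRd \setminus B_{R_0}(x)$, the bound $|d_u(x,y)| \le 2\|u\|_{L^\infty(\mRd)}/|x-y|$ together with $G_s$ being $1$-Lipschitz and $G_s(0)=0$ yields $|G_s(d_u(x,y))| \lesssim |x-y|^{-1}$, whence a contribution $\lesssim C_{d,s}\int_{R_0}^\infty r^{-1-2s}\,dr \lesssim (1-2s)\,R_0^{-2s} \to 0$.

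Thus only the limit of $C_{d,s}\int_{B_{r_0}(x)} \frac{G_s(d_u(x,y))}{|x-y|^{d+2s}}(x-y)\,dy$ matters. Set $L(y) := u(x) + \nabla u(x)\cdot(y-x)$. The $C^{1,\gamma}$ hypothesis gives $|u(y)-L(y)| \lesssim |y-x|^{1+\gamma}$ on $B_{r_0}(x)$, hence $|d_u(x,y)-d_L(x,y)| \lesssim |x-y|^\gamma$ there, and using once more that $G_s$ is $1$-Lipschitz,
\[
C_{d,s}\left| \int_{B_{r_0}(x)} \frac{G_s(d_u(x,y))-G_s(d_L(x,y))}{|x-y|^{d+2s}}(x-y)\,dy \right| \lesssim C_{d,s}\int_{B_{r_0}(x)} |x-y|^{1+\gamma-d-2s}\,dy \lesssim \frac{1-2s}{\gamma+1-2s}\, r_0^{1+\gamma-2s} \to 0,
\]
because $\gamma > 0$. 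Since $d_L(x,x+r\theta) = \nabla u(x)\cdot\theta$ is independent of $r$, polar coordinates $y = x + r\theta$ decouple the radial and angular integrals:
\[
C_{d,s}\int_{B_{r_0}(x)} \frac{G_s(d_L(x,y))}{|x-y|^{d+2s}}(x-y)\,dy = C_{d,s}\left(\int_0^{r_0} r^{-2s}\,dr\right)\int_{\partial B_1} G_s(\nabla u(x)\cdot\theta)\,\theta\;dS(\theta) = \frac{r_0^{1-2s}}{\alpha_d}\int_{\partial B_1} G_s(\nabla u(x)\cdot\theta)\,\theta\;dS(\theta).
\]
As $s \to \tfrac12^-$, $r_0^{1-2s}\to 1$ and, by dominated convergence (the integrands are bounded by $K$ uniformly near $\tfrac12$), the angular integral converges to $\int_{\partial B_1}\overline{G}(\nabla u(x)\cdot\theta)\,\theta\;dS(\theta)$ with $\overline{G}(\rho):=\int_0^\rho (1+t^2)^{-(d+2)/2}\,dt$.

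It remains to identify $\frac{1}{\alpha_d}\int_{\partial B_1}\overline{G}(p\cdot\theta)\,\theta\;dS(\theta) = \frac{p}{\sqrt{1+|p|^2}}$ for $p = \nabla u(x)$, the only genuinely computational point. One option is a direct argument: by rotational symmetry about $p$ the vector is parallel to $p$, and writing the surface integral as an iterated integral over the slices $\{\theta\cdot p/|p| = \tau\}$, $\tau\in(-1,1)$, and integrating by parts in $\tau$ using $\overline{G}'(\rho)=(1+\rho^2)^{-(d+2)/2}$ reduces the claim to a closed-form Beta-function integral. A cleaner alternative is to lift the computation to $\mRdp$: via the Fubini computation carried out just before the statement, the quantity $C_{d,s}\int_{B_{r_0}(x)}\frac{G_s(d_L(x,y))}{|x-y|^{d+2s}}(x-y)\,dy$ is exactly the $P$-projection of the nonlocal normal of order $s$ (in ambient dimension $d+1$) of the half-space $E_L := \{(y,y_{d+1}) : y_{d+1} \le L(y)\}$ at $\widetilde{x}$, localized to the slab $B_{r_0}(x)\times\mR$; the boundary $\partial E_L$ is a hyperplane with inward normal $\frac{(\nabla u(x),-1)}{\sqrt{1+|\nabla u(x)|^2}}$, so \Cref{rem:asymp-nonlocal-normal} applied in dimension $d+1$ to $E_L$, followed by the linear continuous map $P$, gives the limit $\frac{\nabla u(x)}{\sqrt{1+|\nabla u(x)|^2}}$, as claimed. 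I expect the main difficulty to be bookkeeping rather than any single idea: one must check that every remainder term produced along the way carries a factor $1-2s$ (or $R_0^{1-2s}-r_0^{1-2s}$, or $r_0^{1-2s}-1$) that is \emph{not} absorbed by the $(1-2s)^{-1}$ hidden in $\int_0^{r_0} r^{-2s}\,dr$, so that all implicit constants are uniform for $s$ in a left neighborhood of $\tfrac12$ and each error genuinely vanishes; and, if one follows the direct route in the last step, evaluating the resulting spherical integral in closed form.
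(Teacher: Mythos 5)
Your argument is correct and follows exactly the route the paper leaves implicit: the paper gives no proof of this lemma and only says it follows ``by similar steps as in \Cref{lem:asymp-nonlocal-normal} and \Cref{rem:asymp-nonlocal-normal}'', and your localize--linearize--compute structure (with the tail estimate using the $L^\infty$ bound on $u$ and the Lipschitz bound $|G_s(\rho)|\le|\rho|$, needed here since the subgraph $E$ is unbounded) is precisely what carrying those steps out amounts to, with your ``lifting'' option for the last step literally reducing to the half-space computation of \Cref{lem:asymp-nonlocal-normal}. One cosmetic slip: with the paper's convention $d_v(x,y)=(v(x)-v(y))/|x-y|$ one has $d_L(x,x+r\theta)=-\nabla u(x)\cdot\theta$, not $+\nabla u(x)\cdot\theta$, but the sign cancels against $(x-y)=-r\theta$ and the oddness of $G_s$, so your displayed decoupled formula is nevertheless correct.
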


Our next lemma deals with the interaction between the nonlocal normal vector to the graph of $u \colon \mRd \to \mR$ and a function $v\colon \mRd \to \mR$. For that purpose, we redefine $a_u$ so as to include the proper scaling factor for $s \to \frac12^-$.
Indeed, given $u \in \mathbb{V}^g$, we set $a_u \colon \mathbb{V}^g \times \mathbb{V}^0 \to \mathbb{R}$ to be
\begin{equation} \label{E:def-a-scaled}
a_u(w,v) := C_{d,s} \iint_{Q_{\Omega}} \Gts\l(\frac{u(x)-u(y)}{|x-y|}\r) \frac{(w(x)-w(y))(v(x)-v(y))}{|x-y|^{d+1+2s}}dx dy.
\end{equation}

\begin{Lemma}[asymptotics of $a_u$ with H\"older regularity]\label{lem:asymp-nonlocal-normal-interaction}
Let $u,v \in C^{1,\gamma}_c(\Lambda)$ for some $\gamma > 0$ and a bounded set $\Lambda$ containing $\Omega \subset \mRd$. Then, it holds that
\[
\lim_{s \to {\frac{1}{2}}^-}
a_{u}(u,v) = \int_{\Omega} \frac{\nabla u(x) \cdot \nabla v(x)}{\sqrt{1+|\nabla u(x)|^2}} dx,
\]
where $a_u(u,v)$ is the form defined in \eqref{E:def-a-scaled}.
\end{Lemma}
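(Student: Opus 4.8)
The plan is to rewrite $a_u(u,v)$ so that the projected nonlocal normal vector of \Cref{lem:asymp-nonlocal-normal-proj} appears, and then pass to the limit $s\to\frac12^-$ by dominated convergence. Since $\rho\,\Gts(\rho)=G_s(\rho)$ and $u(x)-u(y)=|x-y|\,d_u(x,y)$, the scaled form \eqref{E:def-a-scaled} with $w=u$ reads
\[
a_u(u,v)=C_{d,s}\iint_{Q_\Omega} G_s\l(d_u(x,y)\r)\,\frac{v(x)-v(y)}{|x-y|^{d+2s}}\,dxdy .
\]
Using $\mathbf 1_{Q_\Omega}(x,y)=\mathbf 1_\Omega(x)+\mathbf 1_{\Omega^c}(x)\mathbf 1_\Omega(y)$ I would split $a_u(u,v)=A_s+B_s$, where in $A_s$ we have $x\in\Omega$ and $y\in\mRd$, and in $B_s$ we have $x\in\Omega^c$ and $y\in\Omega$; both inner integrals converge absolutely because near the diagonal $|G_s(d_u)|\le\|\nabla u\|_{L^\infty}$ and $|v(x)-v(y)|\lesssim|x-y|$, while at large distances $|G_s(d_u)|\le|d_u|\lesssim\|u\|_{L^\infty}/|x-y|$ and $v$ has compact support.

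For $A_s$ I would Taylor-expand $v(x)-v(y)=\nabla v(x)\cdot(x-y)+R(x,y)$ with $|R(x,y)|\lesssim|x-y|^{1+\gamma}$, which holds for \emph{all} $x,y$ since $v\in C^{1,\gamma}$. This gives
\[
A_s=\int_\Omega \nabla v(x)\cdot\l(C_{d,s}\int_{\mRd} G_s(d_u(x,y))\,\frac{x-y}{|x-y|^{d+2s}}\,dy\r)dx+E_s,
\]
where the parenthesized factor is exactly $\widetilde\nu_s(\widetilde x;E)$ — the integrand of \eqref{eq:asymp-nonlocal-normal-proj}, with $E$ the subgraph of $u$. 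By \eqref{eq:asymp-nonlocal-normal-proj} it converges pointwise to $\nabla u(x)/\sqrt{1+|\nabla u(x)|^2}$; moreover, splitting the $y$-integral into $\{|x-y|<1\}$ and $\{|x-y|\ge1\}$ and using the exact cancellation $C_{d,s}\int_{|x-y|<1}|x-y|^{1-d-2s}dy=\mathrm{const}$ together with the decay $|G_s(d_u)|\lesssim\|u\|_{L^\infty}/|x-y|$, one obtains $|\widetilde\nu_s(\widetilde x;E)|\lesssim\|\nabla u\|_{L^\infty}+\|u\|_{L^\infty}$ uniformly in $s\in[\tfrac14,\tfrac12)$. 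Since $\nabla v$ is bounded and $\Omega$ is bounded, dominated convergence yields $\int_\Omega\nabla v(x)\cdot\widetilde\nu_s(\widetilde x;E)\,dx\to\int_\Omega\frac{\nabla u\cdot\nabla v}{\sqrt{1+|\nabla u|^2}}\,dx$. The remainder $E_s=C_{d,s}\iint_{\Omega\times\mRd}G_s(d_u)\,R(x,y)\,|x-y|^{-(d+2s)}\,dxdy$ vanishes: on $\{|x-y|<1\}$, $|G_s(d_u)|\le\|\nabla u\|_{L^\infty}$ and $\int_{|x-y|<1}|x-y|^{1+\gamma-d-2s}dy$ stays bounded as $s\to\frac12^-$, while on $\{|x-y|\ge1\}$ one bounds the two constituents of $R$ separately using $v,\nabla v\in L^\infty$, the compact supports, the boundedness of $\Omega$, and $|G_s(d_u)|\lesssim\|u\|_{L^\infty}/|x-y|$; in both regions the prefactor $C_{d,s}=(1-2s)/\alpha_d\to0$ closes the estimate.

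For $B_s$ (which is identically zero if $v$ vanishes on $\Omega^c$, e.g.\ when the lemma is applied to a test function, but must be handled in general) I would again use dominated convergence: for $x\in\Omega^c$ the $y$-integral over $\Omega$ is non-singular away from $\partial\Omega$, and the near/far splitting — the near-$\partial\Omega$ piece being $O(C_{d,s}/(1-2s))=O(1)$ by $|v(x)-v(y)|\lesssim|x-y|$, the far piece decaying in $|x-y|$ and in $C_{d,s}$ — bounds it by a function of $x$ that is uniform in $s$ and integrable over $\Omega^c$; since for a.e.\ $x\in\Omega^c$ (those with $\mathrm{dist}(x,\overline\Omega)>0$) this $y$-integral carries the vanishing prefactor $C_{d,s}$ with no diagonal singularity, it tends to $0$, whence $B_s\to0$. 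Combining the three limits proves the claim. The main obstacle is precisely this interchange of limit and integral: the crude bound $|G_s|\le K$ is useless, because integrating $|x-y|^{-(d-1+2s)}$ near the diagonal produces a factor $(1-2s)^{-1}$ that exactly cancels $C_{d,s}$, so one must track constants carefully and exploit the $L^\infty$-decay of $u$ — which makes $|G_s(d_u)|\lesssim|x-y|^{-1}$ at large distances — to keep the far-field contributions, and hence the dominating functions for $A_s$ and $B_s$, bounded uniformly in $s$.
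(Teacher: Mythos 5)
Your proposal is correct and follows essentially the same strategy as the paper: Taylor-expand $v$ around $x$, identify the leading term as $\nabla v(x)\cdot\widetilde\nu_s(\widetilde x;E)$, invoke the pointwise asymptotics of \Cref{lem:asymp-nonlocal-normal-proj}, and pass to the limit by dominated convergence with majorants that are uniform in $s$ because the factor $C_{d,s}\sim(1-2s)$ is tracked explicitly. The only structural difference is the domain split: the paper writes $Q_\Omega$ as $(\Omega\times\Omega)$ plus twice $(\Omega\times\Omega^c)$ and therefore only needs the localized asymptotics \eqref{eq:asymp-nonlocal-normal-proj2} with $\mathcal N_x=\Omega$, while you take $(\Omega\times\mRd)\cup(\Omega^c\times\Omega)$ and use the global version \eqref{eq:asymp-nonlocal-normal-proj}. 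Your choice makes the far-field part of the Taylor remainder (which grows like $|x-y|^{1+\gamma}$) an extra issue, and you handle it correctly by undoing the expansion there and exploiting $|G_s(d_u(x,y))|\lesssim\|u\|_{L^\infty}/|x-y|$; the paper avoids this by restricting the $y$-integral to $\Omega$, which is bounded. One small slip: the parenthetical claim that $B_s$ vanishes when $v=0$ on $\Omega^c$ is false, since for $(x,y)\in\Omega^c\times\Omega$ the integrand still carries $v(y)\neq0$; this is immaterial, as you handle the general case anyway.
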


\begin{Remark}[heuristic interpretation of \Cref{lem:asymp-nonlocal-normal-interaction}]
Suppose $v$ was a linear function. Then, for all $x,y$ we have $v(x) - v(y) = (x-y) \cdot \nabla v(x)$, and thus we can write
\[
a_u(u,v) = C_{d,s} \iint_{Q_{\Omega}} G_s\l(d_u(x,y) \r) \frac{(x-y) \cdot \nabla v(x)}{|x-y|^{d+2s}} \;  dx dy,
\] 
while
\[
\widetilde{\nu}_s(\widetilde{x};E) \cdot \nabla v (x) = 
C_{d,s} \left(\int_{\mRd} \frac{G_s \left(d_u(x,y) \right)}{|x-y|^{d+2s}} (x-y) \; dy \right) \cdot \nabla v (x) .
\]
Therefore, taking into account the asymptotic behavior in \eqref{eq:asymp-nonlocal-normal-proj2},
\Cref{lem:asymp-nonlocal-normal-interaction} would follow upon integration of the identity above over $\Omega$. 
However, for an arbitrary (nonlinear) $v$, we can only interpret $a_u(u,v)$ as a certain interaction between the nonlocal normal vector $\widetilde{\nu}_s$ and the `nonlocal gradient' $d_v$. Nevertheless, in the limit $s \to {\frac{1}{2}}^-$, only the interaction for $x,y$ close remains, and the asserted result follows because any $C^1$ function is locally linear.

\end{Remark}

\begin{proof}[Proof of \Cref{lem:asymp-nonlocal-normal-interaction}]
We first split the domain of integration using symmetry:
\begin{equation} \label{eq:proof-asymp-nonlocal-normal-interaction-1}
\begin{aligned}
a_u(u,v) &= C_{d,s} \iint_{\Omega \times \Omega} G_s\l(d_u(x,y)\r) \frac{d_v(x,y) }{|x-y|^{d-1+2s}} dxdy \\
&+2 C_{d,s} \iint_{\Omega \times \Omega^c} G_s\l(d_u(x,y)\r) \frac{d_v(x,y) }{|x-y|^{d-1+2s}} dxdy.
\end{aligned}\end{equation}

Consider the first integral in \eqref{eq:proof-asymp-nonlocal-normal-interaction-1}. For a fixed $x \in \Omega$, we expand $v(y) = v(x) + \nabla v(x) \cdot (x-y) + O(|x-y|^{1+\gamma})$ and exploit the fact that $|G_s|$ is uniformly bounded (cf. \eqref{E:bound-Gs}) to obtain
\begin{equation}\label{eq:proof-asymp-nonlocal-normal-interaction-2}
{\begin{aligned}
& \int_{\Omega} G_s\l(d_u(x,y)\r) \frac{d_v(x,y) }{|x-y|^{d-1+2s}} dy \\
& = \int_{\Omega} G_s\l(d_u(x,y)\r) \frac{\nabla v(x) \cdot (x-y) + O\l(|x-y|^{1+\gamma}\r) }{|x-y|^{d+2s}} dy \\
& = \left( \int_{\Omega} G_s\l(d_u(x,y)\r) \frac{x - y}{|x-y|^{d+2s}} dy \right) \cdot \nabla v(x) + 
O\left( \int_{\Omega} \frac{1}{|x-y|^{d+2s-1-\gamma}} dy \right).
\end{aligned}}
\end{equation}
Let us define $D = \sup_{x,y \in \Lambda} |x-y|$. Then, it is clear that $\Omega \subset \Lambda \subset \overline{B}_D(x)$ and integrating in polar coordinates we get
\[ \begin{aligned}
C_{d,s} \int_{\Omega} \frac{1}{|x-y|^{d+2s-1-\gamma}} dy & \lesssim \frac{1-2s}{\alpha_{d}} \int_0^D  r^{-2s+\gamma} dr \\
& \lesssim \frac{1-2s}{(\gamma+1-2s) \alpha_{d}} D^{\gamma+1-2s} \to 0, \quad \text{ as } s \to {\frac{1}{2}}^-.
\end{aligned} \]
Identity \eqref{eq:asymp-nonlocal-normal-proj2} guarantees that
\[
\lim_{s \to {\frac{1}{2}}^-} C_{d,s} \left( \int_{\Omega} G_s\l(d_u(x,y)\r) \frac{x - y}{|x-y|^{d+2s}} dy \right) \cdot \nabla v(x) = \frac{\nabla u(x) \cdot \nabla v(x)}{\sqrt{1+|\nabla u(x)|^2}},
\]
so that it follows from \eqref{eq:proof-asymp-nonlocal-normal-interaction-2} that
\[
\lim_{s \to {\frac{1}{2}}^-} C_{d,s} \int_{\Omega} G_s\l(d_u(x,y)\r) \frac{d_v(x,y) }{|x-y|^{d-1+2s}} dy =
\frac{\nabla u(x) \cdot \nabla v(x)}{\sqrt{1+|\nabla u(x)|^2}},
\]
for every $x \in \Omega$. Since for all $x \in \Omega$ we have
\[ \begin{aligned}
\left| C_{d,s} \int_{\Omega} G_s\l(d_u(x,y)\r) \frac{d_v(x,y) }{|x-y|^{d-1+2s}} dy \right| & \le
C_{d,s} \, |v|_{C^{0,1}(\Oc)} \int_{\Oc} \frac{1}{|x-y|^{d-1+2s}} dy \\
& \le d (1-2s) \int_0^D r^{-2s} dr
= d D^{1-2s},
\end{aligned} \]
we can apply the Lebesgue Dominated Convergence Theorem to deduce that
\begin{equation} \label{E:convergence-Omega}
\lim_{s \to {\frac{1}{2}}^-} C_{d,s} \int_{\Omega\times \Omega} G_s\l(d_u(x,y)\r) \frac{d_v(x,y)}{|x-y|^{d-1+2s}} dxdy
= \int_{\Omega} \frac{\nabla u(x) \cdot \nabla v(x)}{\sqrt{1+|\nabla u(x)|^2}} dx.
\end{equation}

It remains to prove that the last term in \eqref{eq:proof-asymp-nonlocal-normal-interaction-1} converges to $0$ as $s \to {\frac{1}{2}}^-$. This is a consequence of the Dominated Convergence Theorem as well. For  $x \in \Omega$, we write $\delta(x) = \mbox{dist}(x,\partial\Omega)$. We first use that $|G_s|$ is uniformly bounded, according to \eqref{E:bound-Gs}, and integrate in polar coordinates to obtain, for every $x \in \Omega$,
\[
\l| C_{d,s} \int_{\Omega^c} G_s\l(d_u(x,y)\r) \frac{d_v(x,y) }{|x-y|^{d-1+2s}} dy \r|
\lesssim (1-2s) \; |v|_{C^{0,1}(\Oc)} \int_{\delta(x)}^D r^{-2s} \;dr \to 0.
\]

To prove that the integrands are uniformly bounded, we invoke again the uniform boundedness of $G_s$ and split the integral with respect to $y$ into two parts:
\[{\begin{aligned}
&  C_{d,s} \int_{\Omega^c} G_s\l(d_u(x,y)\r) \frac{d_v(x,y) }{|x-y|^{d-1+2s}} dy \lesssim  (1-2s) \int_{\Omega^c} \frac{d_v(x,y) }{|x-y|^{d-1+2s}} dy\\
& \lesssim (1-2s) \l( \int_{\{y \colon |y-x| \le 1 \}} \frac{d_v(x,y) }{|x-y|^{d-1+2s}} dy
+ \int_{\{ y \colon |y-x| > 1\} }\frac{d_v(x,y) }{|x-y|^{d-1+2s}} dy \r) \\
& \leq (1-2s) \l( \int_{\{y \colon |y-x| \le 1 \}} \frac{|v|_{C^{0,1}(\Lambda)} }{|x-y|^{d-1+2s}} dy + \int_{\{ y \colon |y-x| > 1\} } \frac{2|v|_{L^{\infty(\Lambda)}}}{|x-y|^{d+2s}} dy \r) \\
& \lesssim (1-2s) \l( \int_0^1 r^{-2s} dr + \int_1^{\infty} r^{-2s-1} dr \r) =  1 + \frac{1-2s}{2s}.
\end{aligned}} \]
Consequently, we have proved that
\[
\lim_{s \to {\frac{1}{2}}^-} C_{d,s} \iint_{\Omega\times \Omega^c} G_s\l(d_u(x,y)\r) \frac{d_v(x,y)}{|x-y|^{d-1+2s}} dxdy
= 0.
\]
This, together with \eqref{eq:proof-asymp-nonlocal-normal-interaction-1} and \eqref{E:convergence-Omega}, finishes the proof.
\end{proof}

Actually, the regularity assumptions in \Cref{lem:asymp-nonlocal-normal-interaction} can be weakened by a density argument. To this aim, we recall the following stability
result proved in \cite[Theorem 1]{BourBrezMiro2001another}: given $f \in W^1_p(\Omega)$, $1 \le p < \infty$ and $\rho \in L^1(\mRd)$ such that $\rho \ge 0$,
\begin{equation}\label{eq:Bourgain-stab}
\iint_{\Omega \times \Omega} \frac{|f(x)-f(y)|^p}{|x-y|^p} \rho(x-y) \;dxdy \le C \Vert f \Vert^p_{W^1_p(\Omega)} \Vert \rho \Vert_{L^1(\mRd)}.
\end{equation}
The constant $C$ depends only on $p$ and $\Omega$. We next state and prove a modified version of \Cref{lem:asymp-nonlocal-normal-interaction}.

\begin{Lemma}[asymptotics of $a_u$ with Sobolev regularity]\label{lem:asymp-nonlocal-normal-interaction-2}
Let $u,v \in H^1_0(\Lambda)$, for some bounded set $\Lambda$ containing $\Omega$. Then, it holds that
\[
\lim_{s \to {\frac{1}{2}}^-} a_u(u,v) = \int_{\Omega} \frac{\nabla u(x) \cdot \nabla v(x)}{\sqrt{1+|\nabla u(x)|^2}} dx.
\]
\end{Lemma}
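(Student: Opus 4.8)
The plan is to reduce to \Cref{lem:asymp-nonlocal-normal-interaction} by density, the decisive point being a continuity bound for $a_u$ that is uniform as $s\to\tfrac12^-$. Enlarging the ball $\Lambda$ if necessary (which alters neither $a_u(u,v)$ nor the right-hand side, since $u,v$ extend by zero), we may assume $\dist(\Omega,\mRd\setminus\Lambda)>0$. Fix $\ve>0$, pick $u_\ve,v_\ve\in C^\infty_c(\Lambda)\subset C^{1,\gamma}_c(\Lambda)$ with $\|u-u_\ve\|_{H^1(\Lambda)}+\|v-v_\ve\|_{H^1(\Lambda)}<\ve$, and split
\[
a_u(u,v)-\int_{\Omega}\frac{\nabla u\cdot\nabla v}{\sqrt{1+|\nabla u|^2}}=\mathrm{(I)}+\mathrm{(II)}+\mathrm{(III)},
\]
where $\mathrm{(I)}=a_u(u,v)-a_{u_\ve}(u_\ve,v_\ve)$, $\mathrm{(II)}=a_{u_\ve}(u_\ve,v_\ve)-\int_{\Omega}\tfrac{\nabla u_\ve\cdot\nabla v_\ve}{\sqrt{1+|\nabla u_\ve|^2}}$, and $\mathrm{(III)}$ is the difference of the two integrals over $\Omega$. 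By \Cref{lem:asymp-nonlocal-normal-interaction}, $\mathrm{(II)}\to0$ as $s\to\tfrac12^-$ for this fixed $\ve$, and since $\mathbf p\mapsto\mathbf p/\sqrt{1+|\mathbf p|^2}$ is bounded by $1$ and $1$-Lipschitz, a routine estimate gives $|\mathrm{(III)}|\lesssim\|\nabla(u-u_\ve)\|_{L^2(\Omega)}\|\nabla v_\ve\|_{L^2(\Omega)}+\|\nabla(v-v_\ve)\|_{L^2(\Omega)}\lesssim\ve$.

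\textbf{The uniform seminorm bound.} For $w\in H^1_0(\Lambda)$ set $[w]_s^2:=C_{d,s}\iint_{Q_{\Omega}}|w(x)-w(y)|^2|x-y|^{-(d+1+2s)}\,dxdy$. I would prove that there are $C=C(d,\Lambda)$ and $s_0<\tfrac12$ with $[w]_s\le C\|w\|_{H^1(\Lambda)}$ for all $s\in[s_0,\tfrac12)$. On the part of $Q_\Omega$ inside $\Lambda\times\Lambda$ one writes $|x-y|^{-(d+1+2s)}=|x-y|^{-2}\rho_s(x-y)$ with $\rho_s(z)=|z|^{-(d-1+2s)}\chi_{B_D(0)}(z)$, $D=\mathrm{diam}(\Lambda)$, so that $\|\rho_s\|_{L^1(\mRd)}=\omega_{d-1}(1-2s)^{-1}D^{1-2s}$, and \eqref{eq:Bourgain-stab} with $p=2$ on the convex set $\Lambda$ cancels exactly the factor $(1-2s)$ carried by $C_{d,s}$. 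On the remainder of $Q_\Omega$ each pair has one point in $\Omega$ and the other in $\mRd\setminus\Lambda$, where $w$ vanishes and $\int_{\mRd\setminus\Lambda}|x-y|^{-(d+1+2s)}\,dy\lesssim\dist(x,\pO)^{-(1+2s)}\lesssim1$, so that piece is $\lesssim C_{d,s}\|w\|_{L^2(\Omega)}^2\to0$. Since $0\le\Gts\le1$, Cauchy--Schwarz in the measure $C_{d,s}|x-y|^{-(d+1+2s)}dxdy$ then yields the $s$-uniform bound $|a_u(w,v)|\le[w]_s[v]_s\le C\|w\|_{H^1(\Lambda)}\|v\|_{H^1(\Lambda)}$ for $w,v\in H^1_0(\Lambda)$, with $C$ independent of $u$ and of $s\in[s_0,\tfrac12)$.

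\textbf{The term $\mathrm{(I)}$.} Write $\mathrm{(I)}=\bigl[a_u(u-u_\ve,v)+a_u(u_\ve,v-v_\ve)\bigr]+\bigl[a_u(u_\ve,v_\ve)-a_{u_\ve}(u_\ve,v_\ve)\bigr]$. The first bracket is $\lesssim\ve$ by the uniform continuity bound just established. For the second bracket, using $G_s(\rho)=\rho\,\Gts(\rho)$, the $1$-Lipschitz continuity of $G_s$ (indeed $G_s'=(1+\rho^2)^{-(d+1+2s)/2}\le1$), and $0\le\Gts\le1$, one has the pointwise inequality
\[
\bigl|\Gts(d_u)\,d_{u_\ve}-\Gts(d_{u_\ve})\,d_{u_\ve}\bigr|=\bigl|\bigl(G_s(d_u)-G_s(d_{u_\ve})\bigr)+\Gts(d_u)\,(d_{u_\ve}-d_u)\bigr|\le 2\,\bigl|d_{u-u_\ve}\bigr|,
\]
and hence, again by Cauchy--Schwarz and the seminorm bound, $\bigl|a_u(u_\ve,v_\ve)-a_{u_\ve}(u_\ve,v_\ve)\bigr|\le2\,[u-u_\ve]_s\,[v_\ve]_s\lesssim\ve$, uniformly for $s$ near $\tfrac12$. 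Thus $|\mathrm{(I)}|\lesssim\ve$ uniformly in $s\in[s_0,\tfrac12)$.

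Collecting the three terms gives $\limsup_{s\to\frac12^-}\bigl|a_u(u,v)-\int_{\Omega}\tfrac{\nabla u\cdot\nabla v}{\sqrt{1+|\nabla u|^2}}\bigr|\lesssim\ve$, and letting $\ve\to0$ finishes the proof. The one delicate step is the seminorm bound: one must track the $s$-dependence so that $C_{d,s}=(1-2s)/\alpha_d$ is absorbed, which is precisely why \eqref{eq:Bourgain-stab} is needed (its right-hand side contains $\|\rho_s\|_{L^1}\sim(1-2s)^{-1}$), together with the bookkeeping for the interactions in $Q_{\Omega}$ that reach outside $\Lambda$. Everything after that is a standard density argument.
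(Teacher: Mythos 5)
Your proof is correct and follows essentially the same route as the paper's: both reduce to the H\"older case of \Cref{lem:asymp-nonlocal-normal-interaction} by density in $H^1_0(\Lambda)$, and both establish the needed $s$-uniform continuity of $a_u$ via Cauchy--Schwarz together with the Bourgain--Brezis--Mironescu estimate \eqref{eq:Bourgain-stab}, choosing a radial kernel $\rho_s\sim|z|^{-(d-1+2s)}\chi_{B_D}$ whose $L^1$ norm $\sim(1-2s)^{-1}$ cancels the prefactor $C_{d,s}$, and both exploit $0\le\Gts\le1$ and the $1$-Lipschitz property $G_s'\le1$. Your telescoping of term $\mathrm{(I)}$ is organized slightly differently from the paper's two-term split of $a_{u_1}(u_1,v_1)-a_{u_2}(u_2,v_2)$, but the resulting stability bound is the same. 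One cosmetic slip: in the bound for the contributions with $y\in\mRd\setminus\Lambda$ you wrote $\dist(x,\pO)^{-(1+2s)}$ where you mean $\dist(x,\mRd\setminus\Lambda)^{-(1+2s)}\le\dist(\Omega,\mRd\setminus\Lambda)^{-(1+2s)}$, which is what your enlargement of $\Lambda$ actually guarantees; with $\dist(x,\pO)$ the bound would not be uniform in $x$. Your explicit treatment of the pairs in $Q_\Omega$ reaching outside $\Lambda$ is in fact a touch more careful than what appears in the paper, where this bookkeeping is left implicit.
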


\begin{proof}
First we point out that the double integral in the definition of $a_u$ is stable in the $H^1$-norm. More specifically, for $u_1,u_2,v_1,v_2 \in H^1_0(\Lambda)$, we have
\[ \begin{aligned}
 |a_{u_1}&(u_1,v_1)  - a_{u_2}(u_2,v_2)|
 \le  |a_{u_1}(u_1,v_1) - a_{u_1}(u_1,v_2) + a_{u_1}(u_1,v_2) - a_{u_2}(u_2,v_2) | \\
&\lesssim (1-2s) \iint_{\mRd \times \mRd} \frac{|d_{u_1}(x,y)| \; |d_{v_1-v_2}(x,y)| + |d_{u_1-u_2}(x,y)| \; |d_{v_2}(x,y)|}{|x-y|^{d-1+2s}} \; dx dy.
\end{aligned} \]

As before, set $D = \sup_{x,y \in \Lambda} |x-y|$. Using the Cauchy-Schwarz inequality and choosing
\[
\rho(x) = \left\{
\begin{array}{ll}
|x|^{-d+1-2s}, \quad & |x| \le D \\
0, \quad & |x| > D
\end{array}
\right.
\]
in \eqref{eq:Bourgain-stab}, we obtain that 
\[ \begin{aligned}
 |a_{u_1}& (u_1,v_1) - a_{u_2}(u_2,v_2)| \\
& \lesssim (1-2s) \left(|u_1|_{H^1(\mRd)} |v_1 - v_2|_{H^1(\mRd)} + |u_1-u_2|_{H^1(\mRd)} |v_2|_{H^1(\mRd)} \right)
\| \rho \|_{L^1(\mRd)}.
\end{aligned} \]

For the function $\rho$ we have chosen, it holds that 
\[
 \| \rho \|_{L^1(\mRd)} = \frac{d \alpha_d D^{1-2s}}{1-2s}.
\]
Thus, we obtain the following stability result for the form $a$:
\begin{equation} \label{E:stability-a}
{|a_{u_1}(u_1,v_1) - a_{u_2}(u_2,v_2)|
\le C \left(|u_1|_{H^1(\mRd)} |v_1 - v_2|_{H^1(\mRd)} + |u_1-u_2|_{H^1(\mRd)} |v_2|_{H^1(\mRd)} \right),}
\end{equation}
where the constant $C$ is independent of $s \in (0,\frac{1}{2})$ and the functions involved. 

A standard argument now allows us to conclude the proof. Given $u,v \in H^1_0(\Lambda)$, consider sequences $\{u_n\}, \{v_n\} \in C^{\infty}_c(\Lambda)$ such that $u_n \to u$ and $v_n \to v$ in $H^1(\mRd)$. Due to \Cref{lem:asymp-nonlocal-normal-interaction}, we have, for every $n$,
\[
\lim_{s \to {\frac{1}{2}}^-} a_{u_n}(u_n,v_n) = \int_{\Omega} \frac{\nabla u_n(x) \cdot \nabla v_n(x)}{\sqrt{1+|\nabla u_n(x)|^2}} dx.
\]
Applying \eqref{E:stability-a}, the claim follows.
\end{proof}

We are finally in position to show the asymptotic behavior of the notion of error $e_s$ introduced at the beginning of this section (cf. \eqref{eq:def-es}). Notice that, with the rescaling \eqref{E:def-a-scaled}, 
\[
e_s^2(u,v) = a_u(u,u) - a_u(u,v) - a_v(v,u) + a_v(v,v),
\]
while for its local counterpart \eqref{eq:def-e},
\[
e^2(u,v) = \int_\Omega \frac{\nabla u(x) \cdot \nabla u(x)}{\sqrt{1+|\nabla u(x)|^2}}
- \frac{\nabla u(x) \cdot \nabla v(x)}{\sqrt{1+|\nabla u(x)|^2}}
- \frac{\nabla v(x) \cdot \nabla u(x)}{\sqrt{1+|\nabla v(x)|^2}}
+ \frac{\nabla v(x) \cdot \nabla v(x)}{\sqrt{1+|\nabla v(x)|^2}}.
\]
Applying \Cref{lem:asymp-nonlocal-normal-interaction-2} term by term in the expansions above, we conclude that effectively, $e_s$ recovers $e$ in the limit. 

\begin{Theorem}[asymptotics of $e_s$] \label{Thm:asymptotics-es}
Let $u,v \in H^1_0(\Lambda)$, for some bounded set $\Lambda$ containing $\Omega$. Then, we have
\[
\lim_{s \to {\frac{1}{2}}^-} e_s(u,v) = e(u,v).
\]
\end{Theorem}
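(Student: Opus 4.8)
The plan is to reduce the claim to the polarization identities that express $e_s^2$ and $e^2$ through the (rescaled) forms $a_u$ of \eqref{E:def-a-scaled} and their local limits, and then to apply \Cref{lem:asymp-nonlocal-normal-interaction-2} to each of the resulting four terms. First I would record the algebraic identity stated just above the theorem, namely that for all $u,v \in H^1_0(\Lambda)$ one has $e_s^2(u,v) = a_u(u,u) - a_u(u,v) - a_v(v,u) + a_v(v,v)$. This is a direct computation: using $\rho\,\Gts(\rho) = G_s(\rho)$, the linearity of $w \mapsto d_w$, and bilinearity in the two explicit arguments of $a$, one gets $a_u(u,u) - a_u(u,v) = C_{d,s} \iint_{Q_\Omega} G_s(d_u(x,y)) \, d_{u-v}(x,y) \, |x-y|^{-(d-1+2s)} \, dx\, dy$ and, symmetrically, $a_v(v,v) - a_v(v,u) = -C_{d,s} \iint_{Q_\Omega} G_s(d_v(x,y)) \, d_{u-v}(x,y)\, |x-y|^{-(d-1+2s)} \, dx\, dy$; adding the two recovers exactly the integrand of \eqref{eq:def-es}. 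Here one should note that all four integrals are finite for $u,v \in H^1_0(\Lambda)$: since these functions are compactly supported, the Bourgain--Brezis--Mironescu-type bound \eqref{eq:Bourgain-stab} (applied with the truncated kernel used in the proof of \Cref{lem:asymp-nonlocal-normal-interaction-2}) controls the $\Omega\times\Omega$ contribution, and the $\Omega\times\Omega^c$ contribution is bounded as in that proof. The analogous identity for the local quantity, $e^2(u,v) = \int_\Omega \big( \nabla u\cdot\nabla u/Q(\nabla u) - \nabla u\cdot\nabla v/Q(\nabla u) - \nabla v\cdot\nabla u/Q(\nabla v) + \nabla v\cdot\nabla v/Q(\nabla v) \big) \, dx$ with $Q(\mathbf a) = \sqrt{1+|\mathbf a|^2}$, is simply the bilinear expansion of the second line of \eqref{eq:def-e} and is already recorded above.

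Next I would pass to the limit term by term. Applying \Cref{lem:asymp-nonlocal-normal-interaction-2} with its two arguments equal to $(u,u)$, $(u,v)$, $(v,u)$ and $(v,v)$ respectively — each of these pairs lying in $H^1_0(\Lambda)$ — yields
\[
\lim_{s\to\frac12^-} a_u(u,u) = \int_\Omega \frac{|\nabla u|^2}{Q(\nabla u)}\,dx, \qquad \lim_{s\to\frac12^-} a_u(u,v) = \int_\Omega \frac{\nabla u\cdot\nabla v}{Q(\nabla u)}\,dx,
\]
and likewise $\lim_{s\to\frac12^-} a_v(v,u) = \int_\Omega \nabla v\cdot\nabla u/Q(\nabla v)\,dx$ and $\lim_{s\to\frac12^-} a_v(v,v) = \int_\Omega |\nabla v|^2/Q(\nabla v)\,dx$. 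Since each of the four limits exists and is finite, the limit of the sum equals the sum of the limits, which by the first step is precisely $e^2(u,v)$; hence $\lim_{s\to\frac12^-} e_s^2(u,v) = e^2(u,v)$. Finally, both $e_s(u,v)\ge 0$ (the integrand in \eqref{eq:def-es} is non-negative because $G_s$ is non-decreasing) and $e(u,v)\ge 0$, so taking square roots, a continuous operation on $[0,\infty)$, gives $\lim_{s\to\frac12^-} e_s(u,v) = e(u,v)$.

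I do not expect a genuine analytic obstacle here: the delicate work — the $s\to\frac12^-$ asymptotics, the localization of the kernel near the diagonal, and the density argument reducing to smooth functions — has already been absorbed into \Cref{lem:asymp-nonlocal-normal-interaction-2} and, upstream, \Cref{lem:asymp-nonlocal-normal-proj}. The only points requiring care are bookkeeping: matching the subscript/argument structure of $a_{\cdot}(\cdot,\cdot)$ to the hypotheses of the lemma (which requires the subscript to coincide with the \emph{first} argument — exactly the configuration appearing in the polarization identity, including in $a_v(v,u)$ after interchanging the roles of $u$ and $v$), and verifying finiteness of the individual terms so that "limit of a sum equals sum of limits" is legitimate. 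If one wished to avoid even invoking finiteness of each term separately, one could instead pass to the limit directly in the single integral defining $e_s^2$, but the term-by-term route via \Cref{lem:asymp-nonlocal-normal-interaction-2} is cleanest.
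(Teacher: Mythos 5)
Your proposal is correct and takes essentially the same route as the paper: the paper records, in the paragraph immediately preceding the theorem, the same polarization identities $e_s^2(u,v) = a_u(u,u) - a_u(u,v) - a_v(v,u) + a_v(v,v)$ and its local counterpart, and then applies \Cref{lem:asymp-nonlocal-normal-interaction-2} term by term, exactly as you do. Your write-up simply supplies the small verifications (the algebraic identity via $\rho\,\Gts(\rho)=G_s(\rho)$, finiteness of each term, non-negativity before taking square roots) that the paper leaves implicit.
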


\section{Numerical experiments} \label{sec:numerics}
This section presents some numerical results that illustrate the properties of the algorithms discussed in \Cref{SS:schemes}. As an example, we consider $\Omega = B_1 \setminus \overline{B}_{1/2}$, where $B_r$ denotes an open ball with radius $r$ centered at the origin. For the Dirichlet data, we simply let $g = 0$ in $\mRd \setminus B_1$ and $g = 0.4$ in $\overline{B}_{1/2}$. Our computations are performed on an Intel Xeon E5-2630 v2 CPU (2.6 GHz), 16 GB RAM using MATLAB R2016b. More numerical experiments will be presented in an upcoming paper by the authors \cite{BoLiNo19computational}.

\begin{Remark}[classical minimal graph in a symmetric annulus]\label{R:NMS-classical-annulus}
We consider the classical graph Plateau problem in the same domain as our example above, with $g = 0$ on $\partial B_1$ and $g = \gamma$ on $\partial B_{1/2}$.
When $\gamma > \gamma^* := \frac{1}{2}\ln(2+\sqrt{3}) \approx 0.66$, the minimal surface consists of two parts. The first part is given by the graph of function $u(x,y) = \gamma^* - \frac{1}{2} \cosh^{-1}(2\sqrt{x^2+y^2})$ and the second part is given by $\{(x,y,z): \gamma^* \le z \le \gamma, (x,y) \in \partial B_{1/2} \}$. In this situation, a stickiness phenomenon occurs and $u$ is discontinuous across $\partial B_{1/2}$. Notice with our choice of Dirichlet data $\gamma = 0.4 < \gamma^*$, stickiness should not be observed for the classical minimal graph.
\end{Remark}

We first compute the solution $u_h$ of nonlinear system \eqref{E:WeakForm-discrete} using the $L^2$-gradient flow mentioned in \Cref{SS:schemes}. For $s = 0.25$ and mesh size $h = 2^{-4}$, we choose the initial solution $u_h^0 = 0$ and time step $\tau = 1$. The computed discrete solution $u_h$ is plotted in \Cref{F:GF-s025}. By symmetry we know the continuous solution $u$ should be radially symmetric, and we almost recover this property on the discrete level except in the region very close to $\partial B_{1/2}$ where the norm of $\nabla u_h$ is big. It is also seen that $0 \le u_h \le 0.4$, which shows computationally that the numerical scheme is stable in $L^{\infty}$ for this example. To justify convergence of the $L^2$-gradient flow, we consider the hat functions $\{ \varphi_i \}_{i=1}^N$ forming a basis of $\mathbb{V}^0_h$ where $N$ is the degrees of freedom. Consider residual vector $\{ r_i \}_{i=1}^N$ where $r_i := a_{u_h^k}(u_h^k, \varphi_i)$, we plot the Euclidean norm $\Vert r \Vert_{l^2}$ along the iteration for different time step $\tau$ in \Cref{F:GF-properties} (left). In the picture, the line for $\tau = 1$ and $\tau = 10$ almost coincide and we get faster convergence (fewer iterations) for larger time step $\tau$. For every choice of time step, we observe the linear convergence for the gradient flow iteration computationally. We have also tried different choices of initial solution $u_h^0$, and always end up observing the similar linear convergence behavior. 
We also plot the energy $I_s[u_h^k]$ along the iterations in \Cref{F:GF-properties} (right); this shows that the energy $I_s[u_h^k]$ monotonically decreases along the gradient flow iterations independently of the step size $\tau > 0$. This energy decay property will be proved in the upcoming paper \cite{BoLiNo19computational}.

\begin{figure}[!htb]
	\begin{center}
		\includegraphics[width=0.425\linewidth]{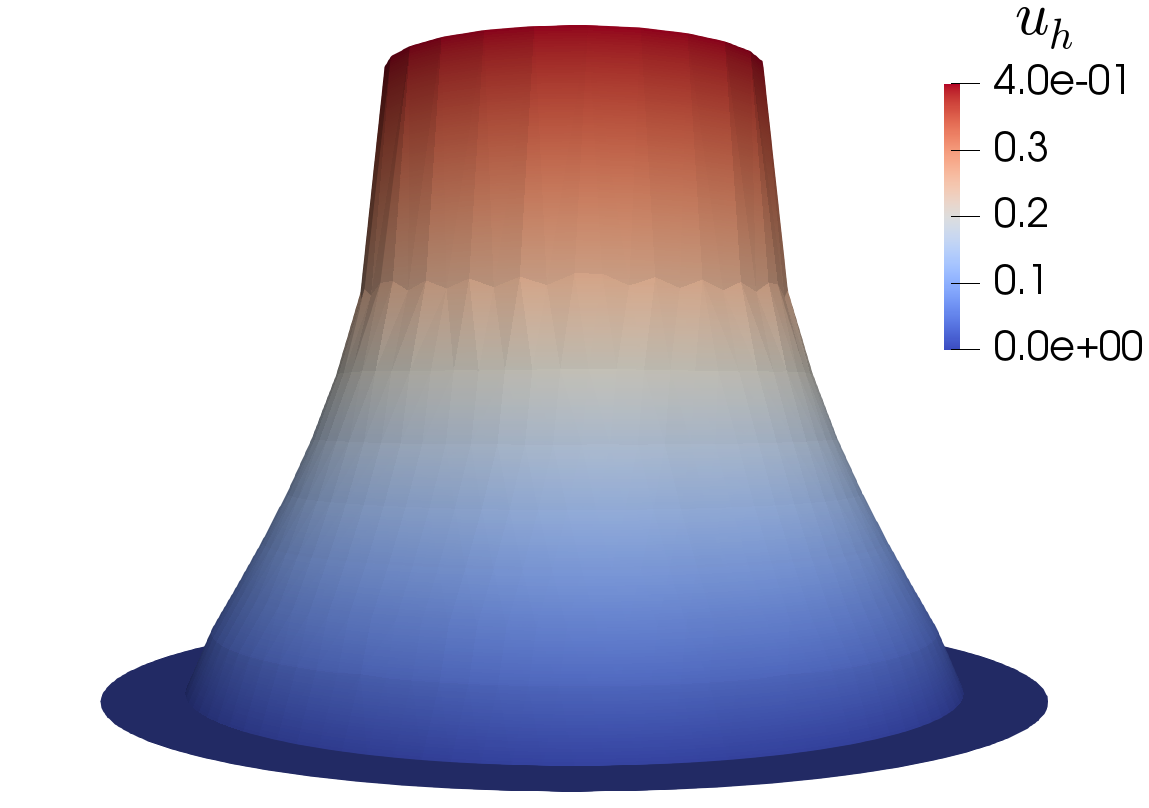}
	\end{center}
	\caption{\small Plot of $u_h$ computed by $L^2$-gradient flow for $s = 0.25$ and $h = 2^{-4}$.}
	\label{F:GF-s025}
\end{figure}

\begin{figure}[!htb]
	\begin{center}
		\includegraphics[width=0.376\linewidth]{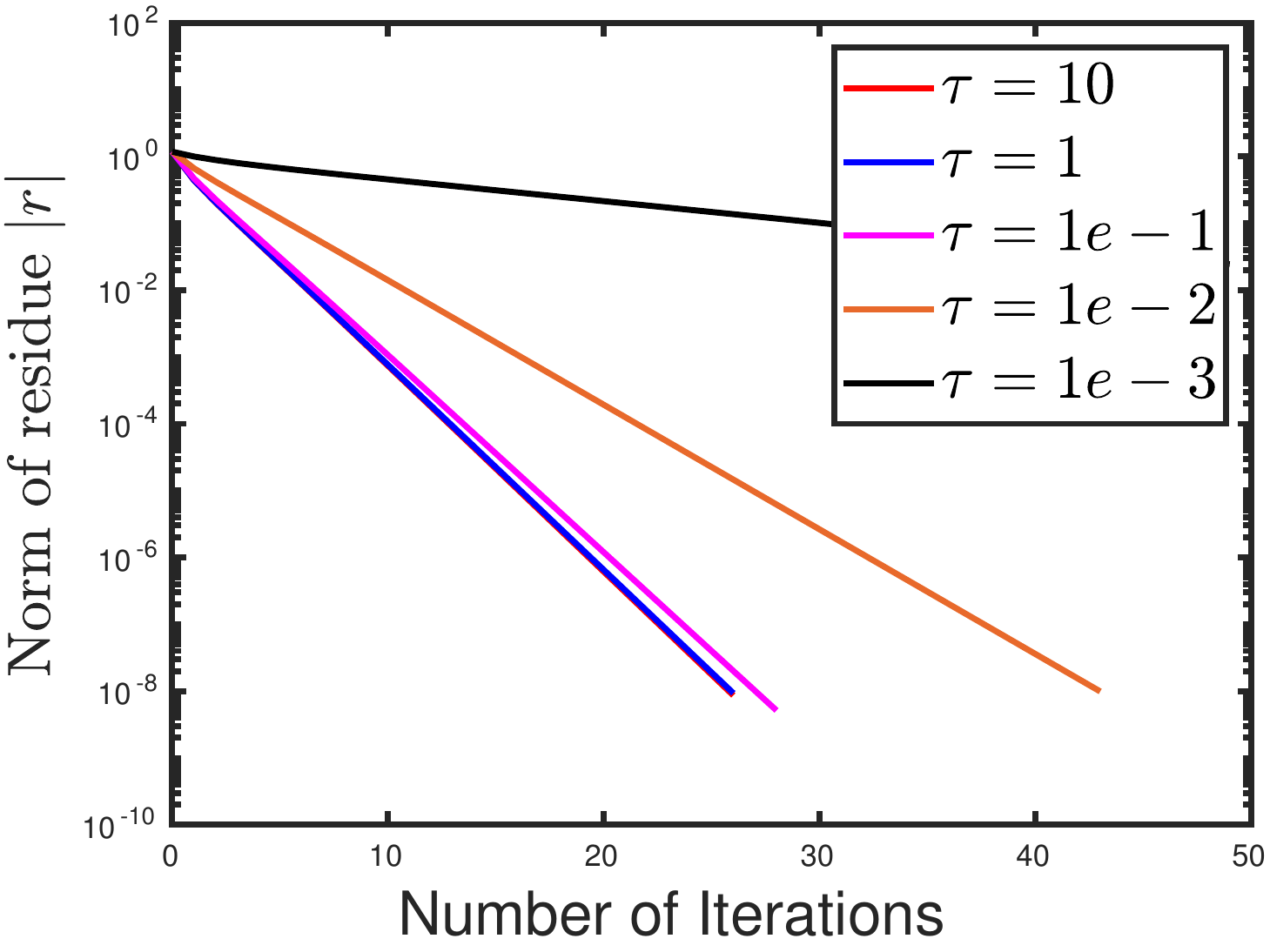}
		\includegraphics[width=0.38\linewidth]{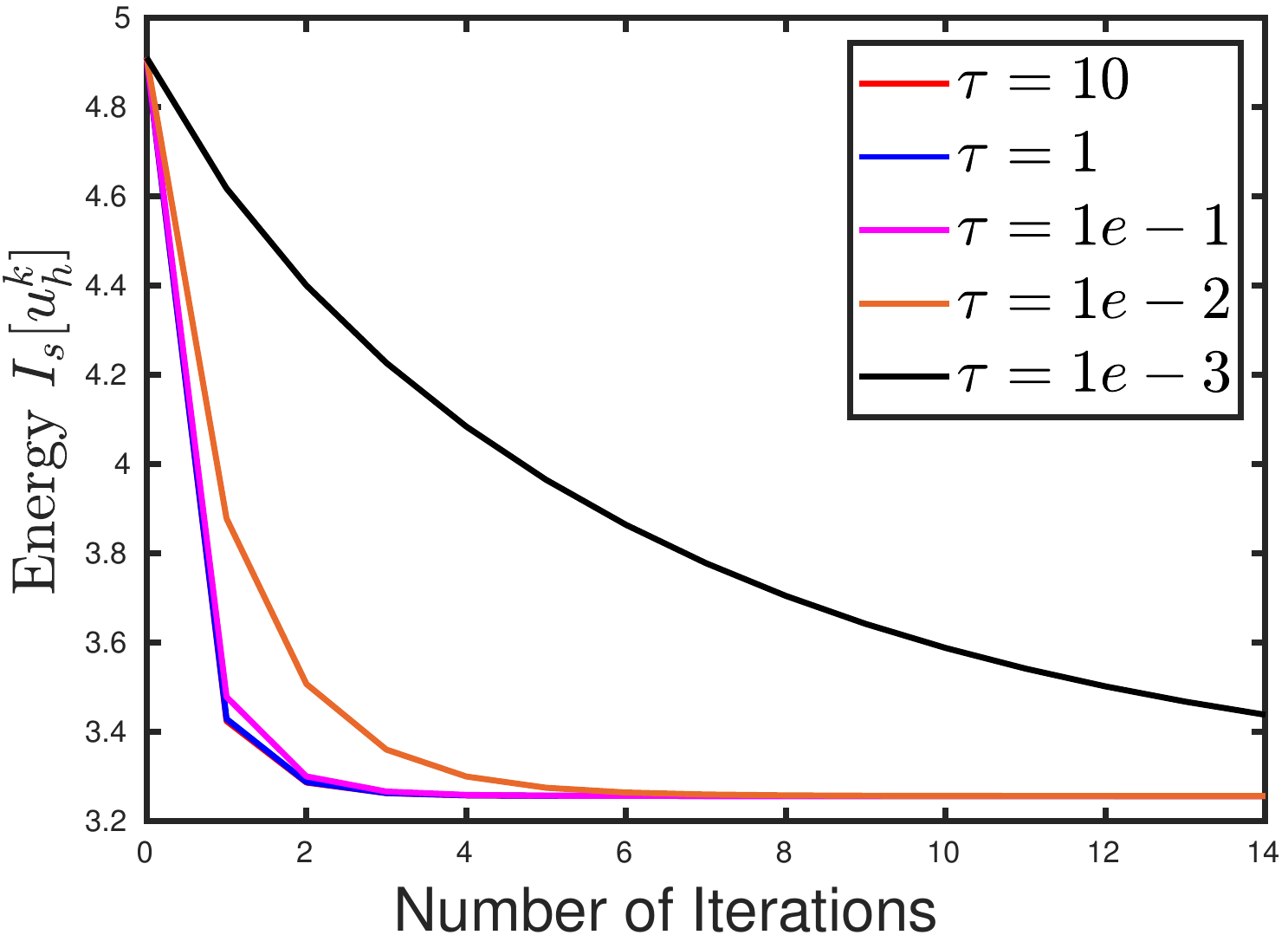}
	\end{center}
	\caption{\small Gradient flow for $s=0.25$ for different choices of $\tau$. Left: norm of residual vector $r$ in the iterative process. Right: energy of $I_s[u_h^k]$ in the iterative process.}
	\label{F:GF-properties}
\end{figure}

\begin{figure}[!htb]
	\begin{center}
		\includegraphics[width=0.6\linewidth]{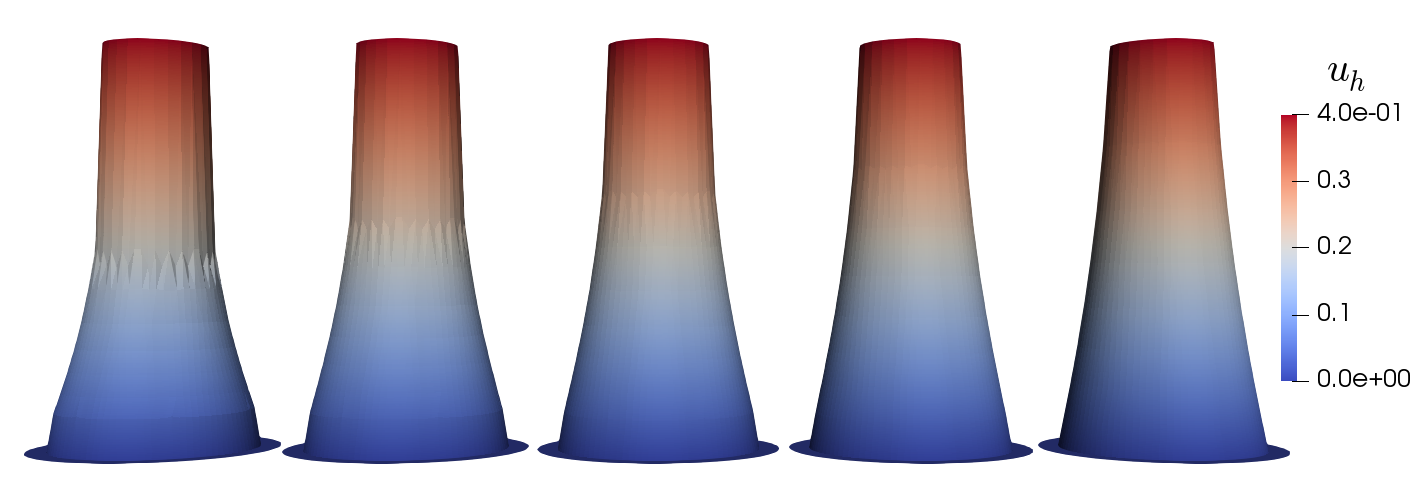}
	\end{center}
	\caption{\small Plot of $u_h$ computed by damped Newton method for $s = 0.05, 0.15, 0.25, 0.35, 0.45$ (from left to right) and $h = 2^{-4}$.}
	\label{F:NMS-Ex3_multis}
\end{figure}

The solution $u_h$ of nonlinear system \eqref{E:WeakForm-discrete} can also be solved using the damped Newton method mentioned in \Cref{SS:schemes}. We choose initial solution $u_h^0 = 0$ and the plots of $u_h$ for several different $s \in (0, 1/2)$ are shown in \Cref{F:NMS-Ex3_multis}. The computed discrete solution for $s = 0.25$ is almost the same as the one computed by gradient flow in \Cref{F:GF-s025}. However, the damped Newton method is more efficient than the gradient flow since we only need $4$ iterations and $243$ seconds compared with $26$ iterations and $800$ seconds when using the gradient flow with $\tau = 1$. 

As shown in the  \Cref{F:NMS-Ex3_multis}, the graph of $u_h$ near $\partial B_{1/2}$ is steeper, and the norm of $\nabla u_h$ larger for smaller $s$, while it becomes smoother, and the norm of $\nabla u_h$ smaller as $s$ increases. This seems to suggest a stickiness phenomenon (see \Cref{R:stickiness}) (stickiness) for small $s$ in this example. We also notice that on the other part of boundary $\partial B_{1}$, the stickiness seems to be small or vanish (i.e. the gap of $u$ on both sides of $\pO$ is small or zero), which is kind of expected since there is no stickiness on $\partial B_{1}$ for the classical case \Cref{R:NMS-classical-annulus}.

Due to the Gamma-convergence result of fractional perimeter in \cite[Theorem 3]{AmbrPhilMart2011Gamma}, $s-$nonlocal minimal graph $u$ converges to the classical minimal graph $u^*$ in $L^1(\Omega)$ as $s \to \frac12^-$. Since we know the analytical solution of classical minimal graph $u^{*}$ in our example, to verify our computation, we could compare the discrete nonlocal minimal graph $u_h$ for $s = 0.499999 \approx \frac12$ with $u^*$. \Cref{F:Newton-multi-h} shows that at least $\Vert u_h - u^* \Vert$ converges for $L^1$ norm to a small number, which indicates the convergence of $u_h$ as $h \to 0$, and a second order convergence rate. Although we could not prove this theoretically, this second order convergence might be due to the fact that $s$ is too close to $0.5$, and the nonlocal graph is almost the same as the classical one. In fact, this $O(h^2)$ convergence rate has been proved for the classical minimal graph problems in $L^1$ norm under proper assumptions for dimension $d=2$ in \cite[Theorem 2]{JoTh75}.

\begin{figure}[!htb]
	\begin{center}
		\includegraphics[width=0.45\linewidth]{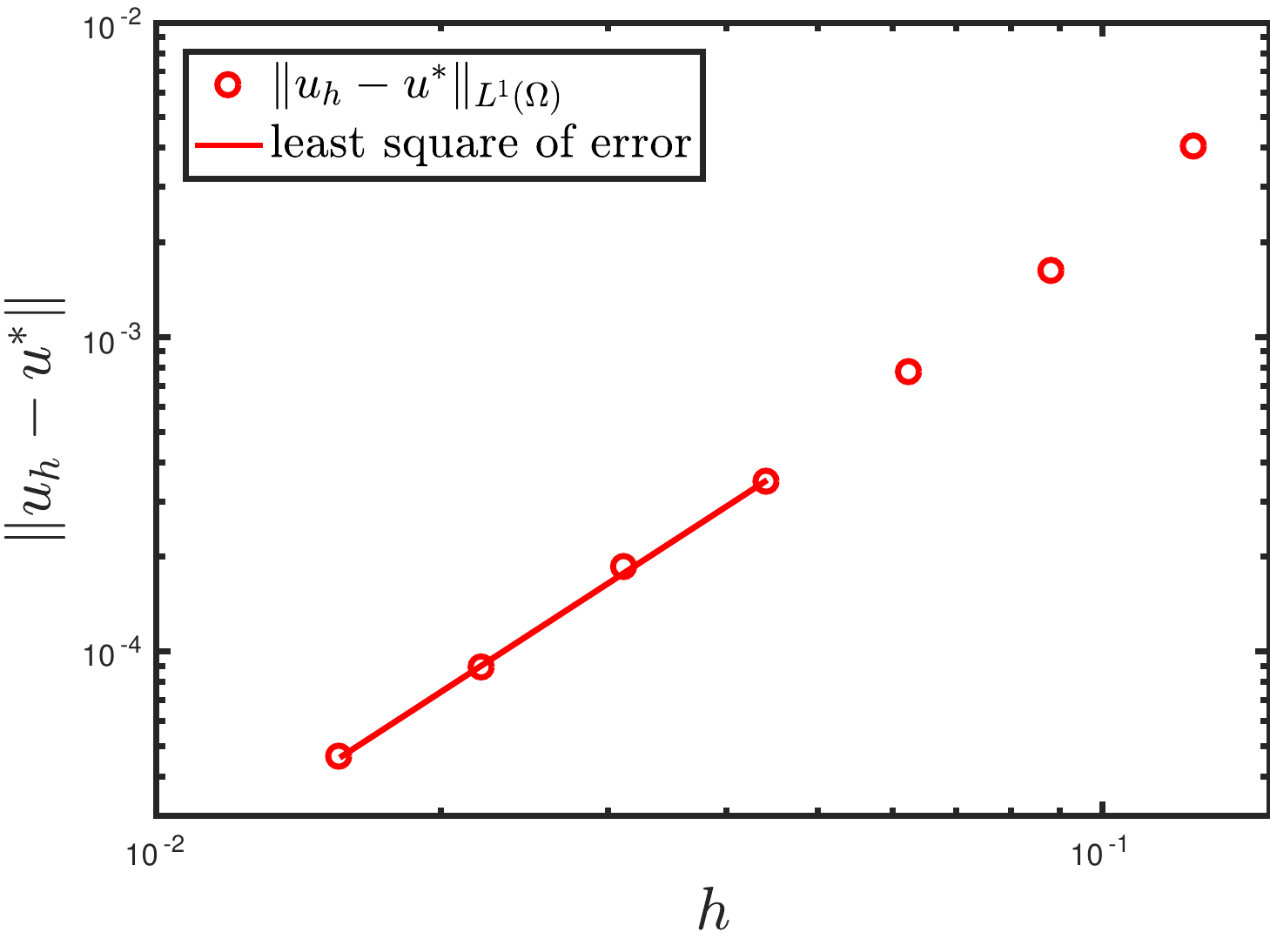}
	\end{center}
	\caption{\small Plot of $\Vert u_h - u^* \Vert_{L^1(\Omega)}$ for different mesh size $h$, where $u_h$ is the discrete solution for $s = 0.499999$ and $u^*$ is the exact solution of classical minimal graph. Least square regression suggests a convergence rate $1.96$, which is close to $O(h^2)$.}
	\label{F:Newton-multi-h}
\end{figure}

\appendix

\section{Fractional perimeter and minimal sets} \label{appendix:perimeter}

The concept of fractional perimeter, that leads
to fractional minimal sets, was introduced in \cite{CaRoSa10}
and has been further developed in
\cite{BucurLombVald18,BucurValdinoci16,DipiFigaPalaVald12Asymptotics,DipiSavinVald16Graph,DipiSavinVald17,DipiSavinVald19,Lomb16Approx,Lombardini-thesis}. Since this justifies the
choice of functional $I_s[u]$ in \eqref{Is}, we review this
rather technical development with emphasis on fractional graphs.

\subsection{Fractional perimeters and minimal sets}
Here we present the definitions of fractional perimeter and fractional minimal sets and discuss their properties.

\begin{Definition}[$s$-perimeter] \label{def:s-perimeter}
Given a domain $\Omega' \subset \mRdp$ and $s \in (0, 1/2)$, the $s$-perimeter of a set $E \subset \mRdp$
in $\Omega'$ is defined as
\begin{equation}\label{E:NMS-Energy}
    P_s(E,\Omega') := L_s(E \cap \Omega', E^c) + L_s(E \setminus \Omega', \Omega' \setminus E),
\end{equation}
where $E^c := \mRdp \setminus E$ and for any sets $A,B \subset \mRdp$, the interaction between them is defined as
\begin{equation*}
    L_s(A,B) := \iint_{A \times B} \frac{dxdy}{|x-y|^{d+1+2s}}.
\end{equation*}
\end{Definition}

Formally, definition \eqref{E:NMS-Energy} coincides with
\begin{equation*}
    P_s(E,\Omega') = \frac{1}{2} \l( [\chi_E]_{W^{2s}_1(\mRdp)} - [\chi_E]_{W^{2s}_1(\Omega'^c)} \r),
\end{equation*}
where
\begin{equation*}
    [v]_{W^s_p(U)} := \l( \iint_{U \times U} \frac{|v(x)-v(y)|^p}{|x-y|^{d+1+sp}} dxdy \r)^{1/p}
\end{equation*}
is the standard Gagliardo-Aronszajn-Slobodeckij seminorm. 

It is known that, as $s \to \frac12^-$, the scaled $s$-perimeter $P_s(E,\Omega')$ converges to the classical perimeter, see \cite[Theorem 6.0.5]{BucurValdinoci16} and references therein. Indeed, for all $R > 0$ and all sets $E$ with finite perimeter in the ball $B_R$,
\begin{equation*}
\lim_{s \to \frac12^-} \l(\frac{1}{2} - s \r) P_s(E, B_r) = c_{d+1} P(E, B_r),
\end{equation*}
for almost every $r \in (0,R)$, where $c_{d+1}$ is a renormalizing constant and
$P(E,\Omega')$ is defined as
\begin{equation*}
P(E,\Omega') := \sup \l\{ \int_E \div \; \vp \; dx \colon \vp \in C^1_c(\Omega', \mRdp), |\vp|\leq 1 \r\}.
\end{equation*}

On the other hand, the behavior of $P_s$ as $s \to 0$ is investigated in \cite{DipiFigaPalaVald12Asymptotics}, where it is shown
that if $P_{s_0}(E,\Omega') < \infty$ for some $s_0 \in (0, 1/2)$, and the limit 
\begin{equation*}
\alpha(E) := \lim_{s \to 0} \; 2s \int_{E \cap B_1^c} \frac{1}{|y|^{d+1+2s}} dy
\end{equation*}
exists, then
\begin{equation*}
\lim_{s \to 0} \; 2s |\partial B_1| P_s(E, \Omega') = \l( |\partial B_1| - \alpha(E) \r) |E \cap \Omega'|
+ \alpha(E)  \;  |\Omega' \setminus E|.
\end{equation*}
In particular, if $E$ is a bounded set and $P_{s_0}(E, \Omega') < \infty$ for some $s_0$, then $\alpha(E)= 0$ and~$\lim_{s \to 0} 2s P_s(E, \Omega') = |E \cap \Omega'|$.
Therefore, the scaled limit of $P_s(E, \Omega')$ is the measure of $E$ within $\Omega'$ provided $E$ is bounded.

We are now in position to define $s$-minimal sets in $\Omega'$, which are sets that minimize the $s$-fractional perimeter among those that coincide with them outside $\Omega'$. It is noteworthy that this definition does not only involve the behavior of sets in $\overline{\Omega'}$ but rather in the whole space $\mRdp$.

\begin{Definition}[$s$-minimal set]
A set $E$ is $s$-minimal in a open set $\Omega' \subset \mRdp$ if $P_s(E,\Omega')$ is finite and
$P_s(E, \Omega') \leq P_s(F, \Omega')$ among all measurable sets $F \subset \mRdp$ such that
$F \setminus \Omega' = E \setminus \Omega'$. 
The boundary $\partial E$ of a $s$-minimal set $E$ is then called a $s$-minimal surface in $\Omega'$. 
\end{Definition}

Given an open set $\Omega'$ and a fixed set $E_0$, the Dirichlet or Plateau problem for nonlocal minimal surfaces aims to find a $s$-minimal set $E$ such that $E \setminus \Omega' = E_0 \setminus \Omega'$. For a bounded Lipschitz domain $\Omega'$ the existence of solutions to the Plateau problem is established in \cite{CaRoSa10}. 

\begin{Remark}[stickiness] \label{R:stickiness}
A striking difference between nonlocal minimal surface problems and their local counterparts is the emergence of {\em stickiness} phenomena \cite{DipiSavinVald17}: the boundary datum may not be attained continuously. Stickiness is indeed the typical behavior of nonlocal minimal surfaces over bounded domains $\Omega'$. Reference \cite{BucurLombVald18} proves that when $s$ is small and the Dirichlet data occupies, in a suitable sense, less than half the space at infinity, either $s$-minimal sets are empty in $\Omega'$ or they satisfy a density condition.
The latter entails the existence of a $\delta=\delta(s) > 0$ such that for every $x \in \Omega'$ satisfying $B_{\delta}(x) \Subset \Omega'$, it holds that $|E \cap B_{\delta}(x)| > 0$. The recent work \cite{DipiSavinVald19} shows that, in the $1d$ graph setting, there is no intermediate behavior: minimizers either develop jump discontinuities 
or have a H\"older continuous first derivative
across $\partial \Omega'$. 
\end{Remark}

\subsection{Fractional minimal graphs}

Since we are concerned with graphs, the set $\Omega'=\Omega\times\mR$ is a cylinder and $ E \setminus \Omega'$ is a subgraph. Lombardini points out in \cite[Remark 1.14]{Lomb16Approx} that, in this case, the definition of minimal set as a minimizer of the fractional perimeter is meaningless because $P_s(E,\Omega') = \infty$ for every set $E$.
This issue can be understood by decomposing the fractional perimeter
\[
P_s(E,\Omega') = P_s^L (E,\Omega') + P_s^{NL} (E,\Omega'),
\]
with
\[
\begin{aligned}
P_s^L (E,\Omega')  & = L_s(E \cap \Omega', E^c\cap \Omega') = \frac12 |\chi_E |_{W^{2s}_1(\Omega)}, \\ 
P_s^{NL} (E,\Omega') & =  L_s(E \cap \Omega', E^c \setminus \Omega')+ L_s(E \setminus \Omega', \Omega' \setminus E)  \\ 
& = \iint_{\Omega' \times {\Omega'}^c} \frac{|\chi_E(x) - \chi_E(y)|}{|x-y|^{d+1+2s}} \; dxdy,
\end{aligned}
\]
and realizing that $P_s^{NL}(E,\Omega')$ is trivially infinite independently of $E$. This problem can be avoided by, instead of $s$-minimal sets, seeking for {\em locally} $s$-minimal sets. 

\begin{Definition}[locally $s$-minimal set]
A set $E$ is locally $s$-minimal in $\Omega'$ if it is $s$-minimal in every bounded open subset compactly supported in $\Omega'$.
\end{Definition}

For bounded sets with Lipschitz boundary, the notions of $s$-minimality and local $s$-minimality coincide \cite{Lomb16Approx}. However, as also shown in \cite{Lomb16Approx}, the Plateau problem (in terms of locally $s$-minimal sets) admits solutions even when the domain is unbounded.

\begin{Proposition}[existence of locally $s$-minimal sets]
Let $\Omega' \subset \mRdp$ be an open set and let $E_0 \subset \mRdp$. Then, there exists a set $E \subset \mRdp$ locally $s$-minimal in $\Omega'$, such that $E \setminus \Omega' = E_0 \setminus \Omega'$.
\end{Proposition}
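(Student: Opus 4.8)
The plan is to construct $E$ as an $L^1_{\mathrm{loc}}$ limit of $s$-minimal sets on an exhausting family of bounded subdomains. First I would fix an increasing family $\{\Omega'_k\}_{k\in\mathbb N}$ of bounded open sets with Lipschitz boundary --- for instance finite unions of balls --- such that $\Omega'_k \Subset \Omega'_{k+1}$ and $\bigcup_k \Omega'_k = \Omega'$; such a family exists for any open $\Omega' \subset \mRdp$. For each $k$ I would solve the Plateau problem on $\Omega'_k$ with exterior datum $E_0$: the set $F = E_0 \setminus \Omega'_k$ is an admissible competitor with $P_s(F,\Omega'_k) = L_s(E_0 \setminus \Omega'_k, \Omega'_k) \le L_s\big((\Omega'_k)^c, \Omega'_k\big) < \infty$, where finiteness uses $2s<1$ together with the Lipschitz regularity of $\partial\Omega'_k$. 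Hence the direct method applies: a minimizing sequence has uniformly bounded $W^{2s}_1$ seminorm on a bounded neighborhood of $\Omega'_k$ --- its values off $\Omega'_k$ being the fixed function $\chi_{E_0}$ --- so by the compact embedding into $L^1$ and the $L^1_{\mathrm{loc}}$ lower semicontinuity of $P_s(\cdot,\Omega'_k)$ (Fatou applied to the interaction integrals) one obtains a set $E_k$ that is $s$-minimal in $\Omega'_k$ and satisfies $E_k \setminus \Omega'_k = E_0 \setminus \Omega'_k$. This is the existence argument of \cite{CaRoSa10}.

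Next I would invoke two standard facts. (i) \emph{Localization of minimality}: if $G$ is $s$-minimal in an open set $U$ and $V \subseteq U$ is open, then $G$ is $s$-minimal in $V$; indeed, any competitor $F$ for $G$ in $V$ satisfies $F = G$ outside $V$, hence outside $U$, so $F$ is admissible in $U$, and the identity $P_s(G,U) - P_s(F,U) = P_s(G,V) - P_s(F,V)$, valid because $G \triangle F \subseteq V$, transfers the inequality $P_s(G,U) \le P_s(F,U)$ to level $V$. In particular each $E_k$ is $s$-minimal in every open subset of $\Omega'_k$, so in $\Omega'_j$ for $j \le k$. (ii) \emph{Uniform local perimeter bounds}: comparing $E_k$ with $E_k \setminus B$ and with $E_k \cup B$ for balls $B \Subset \Omega'$, the $s$-minimality of $E_k$ in $\Omega'_k$ (which contains $B$ once $k$ is large) yields $\sup_k P_s(E_k, B') < \infty$ for every $B' \Subset B$, with a bound of order $(\mathrm{radius})^{d+1-2s}$ \cite{CaRoSa10}. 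Running a diagonal argument over an exhaustion of $\mRdp$ by balls, I then extract a (non-relabeled) subsequence with $\chi_{E_k} \to \chi_E$ in $L^1_{\mathrm{loc}}(\mRdp)$. Since $\chi_{E_k} = \chi_{E_0}$ on $(\Omega')^c \subseteq (\Omega'_k)^c$ for all $k$, the limit obeys $\chi_E = \chi_{E_0}$ a.e.\ on $(\Omega')^c$, i.e.\ $E \setminus \Omega' = E_0 \setminus \Omega'$.

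It then remains to show that $E$ is locally $s$-minimal in $\Omega'$. Fix an arbitrary bounded open set $W \Subset \Omega'$; then $W \Subset \Omega'_{j_0}$ for some $j_0$, so by (i) every $E_k$ with $k \ge j_0$ is $s$-minimal in $W$, and I would pass to the limit. Lower semicontinuity gives $P_s(E,W) \le \liminf_k P_s(E_k,W)$. For an arbitrary competitor $F$ of $E$ in $W$, the sets $F_k := (F \cap W) \cup (E_k \setminus W)$ are competitors of $E_k$ in $W$; since $\chi_{F_k} \to \chi_F$ in $L^1_{\mathrm{loc}}$ and the uniform perimeter estimates of (ii) dominate the relevant interaction integrals near $\partial W$, dominated convergence yields $P_s(F_k,W) \to P_s(F,W)$. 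Chaining $P_s(E,W) \le \liminf_k P_s(E_k,W) \le \liminf_k P_s(F_k,W) = P_s(F,W)$ shows that $E$ is $s$-minimal in $W$; since $W \Subset \Omega'$ was arbitrary, $E$ is locally $s$-minimal in $\Omega'$.

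The main obstacle is the stability of $s$-minimality under the $L^1_{\mathrm{loc}}$ limit: because $P_s(\cdot,W)$ couples the bounded set $W$ with all of $\mRdp$ through a singular kernel, the soft lower semicontinuity must be paired with the quantitative, uniform-in-$k$ perimeter control of step (ii) to ensure that the energies $P_s(F_k,W)$ of the surgically modified competitors converge to $P_s(F,W)$, with no concentration of mass along $\partial W$ and no leakage at infinity. The verification of the localization identity in (i) and the Lipschitz/finiteness bookkeeping for the nonlocal tails are the remaining, more routine, technical points.
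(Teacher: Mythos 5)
The paper does not give its own proof of this proposition; it states it and refers the reader to Lombardini's work (\cite{Lomb16Approx} and the related thesis), which the proposition is taken from. Your proof sketch is the standard exhaustion-plus-compactness argument, and it is essentially the route Lombardini takes: exhaust $\Omega'$ by bounded Lipschitz domains, apply the direct method of \cite{CaRoSa10} on each to get $s$-minimal sets $E_k$, use localization of minimality plus uniform perimeter bounds to get $L^1_{\mathrm{loc}}$ precompactness of $\{\chi_{E_k}\}$, and then show the limit is $s$-minimal in every $W \Subset \Omega'$ by lower semicontinuity and the competitor-surgery $F_k = (F\cap W)\cup(E_k\setminus W)$. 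The outline and all the key lemmas you invoke are correct.

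One imprecision in the final step is worth flagging. You attribute the control of the interaction integrals near $\partial W$, needed to show $P_s(F_k,W) \to P_s(F,W)$, to the uniform perimeter estimates on $E_k$. That is not where the control actually comes from. Write the discrepancy as an integral of $\frac{|\chi_{E_k}(y)-\chi_E(y)|}{|x-y|^{d+1+2s}}$ over $W\times W^c$, and split $W^c$ into an annulus $A_\delta = \{0 < d(y,W) < \delta\}$ and its complement. The contribution from $A_\delta$ is bounded, uniformly in $k$, by $\iint_{W\times A_\delta} |x-y|^{-(d+1+2s)}\,dx\,dy$, which tends to $0$ as $\delta\to 0$ because $s<1/2$ and $\partial W$ is regular (by the localization lemma one may restrict to Lipschitz $W$) — this uses no information about $E_k$. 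Away from $A_\delta$ the kernel is bounded and decays at infinity, so dominated convergence against the fixed integrable majorant $|W|\,d(y,W)^{-(d+1+2s)}$ sends that piece to $0$ as $k\to\infty$, using only the a.e.\ convergence $\chi_{E_k}\to\chi_E$. The uniform perimeter bounds enter earlier, to produce the convergent subsequence, not to tame the boundary layer. This is a presentational slip rather than a genuine gap; with that correction the argument goes through.
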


We now consider the minimal graph problem: we assume $\Omega' = \Omega \times \mR$ is a cylinder with $\Omega \subset \mRd$ being a Lipschitz domain, and the Dirichlet datum to be the subgraph of some function $g$ that is bounded and compactly supported (cf. \eqref{E:Def-E0} and \eqref{E:assumptions}).
In this setting, Dipierro, Savin and Valdinoci \cite{DipiSavinVald16Graph} proved that for every locally $s$-minimal set in $\Omega'$ there exists $M_0 > 0$ such that
\begin{equation}\label{E:minimal-priori} 
\Omega \times (-\infty, -M_0) \; \subset \; E \cap \Omega' \; \subset \; \Omega \times (-\infty, M_0).
\end{equation}
As pointed out in \cite[Proposition 2.5.3]{Lombardini-thesis}, a consequence of this estimate is that a set $E$ is locally $s$-minimal in $\Omega' = \Omega \times \mR$ if and only if it is $s$-minimal in $\Omega_M = \Omega \times (-M,M)$ for every $M>M_0$. 
 
Additionally, once the a priori bound \eqref{E:minimal-priori} on the vertical variation of locally $s$-minimal sets is known, it can be shown that minimal sets need to be subgraphs, that is,
\begin{equation}\label{E:Def-E}
E \cap \Omega' = \l\{ (x', x_{d+1}) \colon x_{d+1} < u(x'), \; x' \in \Omega \r\}
\end{equation}
for some function $u$ (cf. \cite[Theorem 4.1.10]{Lombardini-thesis}).
We refer to such a set $E$ as a {\em nonlocal minimal graph} in $\Omega$. 
Thus, as expressed in \Cref{R:solutions}, the Plateau problem for nonlocal minimal graphs consists in finding a function $u: \mRd \to \mR$, with the constraint $u = g$ in ${\Omega}^c$, such that the resulting set $E$ is a locally $s$-minimal set.

\section{Derivation of the energy \eqref{Is} for graphs: proof of \Cref{prop:perimeter-energy}} \label{appendix:energy}

In this appendix, we establish the relation between the fractional $s$-perimeter $P_s(E,\Omega')$ of the subgraph of a certain function $u$ given by \eqref{E:Def-E} and the energy functional $I_s[u]$ defined in \eqref{Is}. This will also prove \Cref{prop:perimeter-energy}.

We recall our basic assumptions \eqref{E:assumptions}: $\Omega \subset \mRd$ is a bounded Lipschitz domain and $g \in L^\infty(\Omega^c)$. Given $M > 0$ sufficiently large depending on $s,d,\Omega,g$, we let $\Omega_M = \Omega \times [-M,M]$. We note that, according to \eqref{E:minimal-priori} and \eqref{E:Def-E}, the problem of nonlocal minimal graphs in $\Omega$ reduces to finding a function $u$ in the class
\begin{equation*}
\l\{ u \colon \mRd \to \mR : \  \Vert u \Vert_{L^{\infty}(\Omega)} \leq M, \ u = g \mbox{ in } \Omega^c \r\}
\end{equation*}
such that the set $E := \{ (x', x_{d+1})\in\mRdp: x_{d+1} \leq u(x') \}$ satisfies 
\[
P_s(E, \Omega_M) \le P_s(F, \Omega_M)
\]
for every set $F$ that coincides with $E$ outside $\Omega_M$.
Our goal is to prove \Cref{prop:perimeter-energy}, namely to show that
\[
P_s (E, \Omega_M) = I_s[u] + C(M,d,s,\Omega, g), 
\]
where $I_s$ is given \eqref{Is} and \eqref{E:NMS-Energy-Graph} and reads
\[
I_s[u] = \iint_{Q_{\Omega}} F_s\l(\frac{u(x)-u(y)}{|x-y|}\r) \frac{1}{|x-y|^{d+2s-1}} \;dxdy.
\]

This identity will follow by elementary arguments, inspired in Lombardini \cite{Lomb16Approx};
further details can be found in \cite[Chapter 4]{Lombardini-thesis}. 
Definition \eqref{E:NMS-Energy} yields
\begin{equation} \label{E:s-perimeter}
    P_s(E, \Omega_M) = L_s(E \cap \Omega_M, E^c) + 
    L_s(E \setminus \Omega_M, E^c \cap \Omega_M).
\end{equation}
For the first term $I$ on the right hand side above, we write $I=I_1+I_2$ where
\begin{equation*}
    I := L_s(E \cap \Omega_M, E^c) =
    \iint_{\Omega \times \mRd} dxdy \int_{-M}^{u(x)} dt
    \int_{u(y)}^{\infty} \frac{dr}{\l((t-r)^2+|x-y|^2\r)^{(d+1+2s)/2}}
\end{equation*}
and
\begin{align*}
I_1 & := \iint_{\Omega \times \Omega} dxdy \int_{-M-u(y)}^{u(x)-u(y)} dt
    \int_{-t}^{\infty} \frac{dr}{\l(r^2+|x-y|^2\r)^{(d+1+2s)/2}}, \\
I_2 & := \iint_{\Omega \times {\Omega}^c} dxdy 
    \int_{-M-u(y)}^{u(x)-u(y)} dt
    \int_{-t}^{\infty} \frac{dr}{\l(r^2+|x-y|^2\r)^{(d+1+2s)/2}}.
\end{align*}
Recalling that $\Omega' = \Omega \times \mR$, the second term $II$ in \eqref{E:s-perimeter} can be split as
\begin{equation*}
II := L_s(E \setminus \Omega_M, E^c \cap \Omega_M) = II_1 + II_2,
\end{equation*}
where
\begin{equation}\label{eq:second_term}
II_1 := L_s( (E\cap \Omega') \setminus \Omega_M, E^c \cap \Omega_M),
\quad
II_2 := L_s(E \setminus \Omega', E^c \cap \Omega_M).
\end{equation}
Applying Fubini's Theorem and the change of variables $(r,t) = (-\tilde{r}-\tilde{t}, -\tilde{r}-M)$, we obtain
\begin{equation*}
\begin{aligned}
II_1 &= \iint_{\Omega \times \Omega} dxdy \int_{-\infty}^{-M} d\tilde{t}
    \int_{u(y)}^{M} \frac{d\tilde{r}}{\l( (\tilde{t}-\tilde{r})^2+|x-y|^2 \r)^{(d+1+2s)/2}} \\
    &= \iint_{\Omega \times \Omega} dxdy \int_{-2M}^{-u(y)-M} dt
    \int_{-t}^{\infty} \frac{dr}{\l(r^2+|x-y|^2\r)^{(d+1+2s)/2}}. \\    
\end{aligned}
\end{equation*}
Therefore, we have
\begin{equation*}
\begin{aligned}
I_1 + II_1 &= \iint_{\Omega \times \Omega} dxdy \int_{-2M}^{u(x)-u(y)} dt
    \int_{-t}^{\infty} \frac{dr}{(r^2+|x-y|^2)^{(d+1+2s)/2}} \\
&= \iint_{\Omega \times \Omega} \frac{dxdy}{|x-y|^{d-1+2s}} 
\int_{\frac{-2M}{|x-y|}}^{\frac{u(x)-u(y)}{|x-y|} } dt 
\int_{-t}^{\infty} \frac{dr}{(r^2+1)^{(d+1+2s)/2}},
\end{aligned}
\end{equation*}
and using the symmetry in $(x,y)$ of the integral over $\Omega \times \Omega$, we arrive at
\begin{equation*}
\begin{aligned}
I_1 + II_1 = \frac{1}{2} \iint_{\Omega \times \Omega} \frac{dxdy}{|x-y|^{d-1+2s}} 
\Bigg( & \int_{\frac{-2M}{|x-y|}}^{\frac{u(x)-u(y)}{|x-y|} } dt 
\int_{-t}^{\infty} \frac{dr}{(r^2+1)^{(d+1+2s)/2}} \\
& \quad +
\int_{\frac{-2M}{|x-y|}}^{\frac{u(y)-u(x)}{|x-y|} } dt 
\int_{-t}^{\infty} \frac{dr}{(r^2+1)^{(d+1+2s)/2}}
\Bigg).
\end{aligned}
\end{equation*}

Next, the splitting
\[ \begin{aligned}
\int_{\frac{-2M}{|x-y|}}^{\frac{u(x)-u(y)}{|x-y|} } & dt  \int_{-t}^{\infty} \frac{dr}{(r^2+1)^{(d+1+2s)/2}}  \\ 
& = \int_{\frac{-2M}{|x-y|}}^{0} dt 
\int_{-t}^{\infty} \frac{dr}{(r^2+1)^{(d+1+2s)/2}} - \int_{0}^{\frac{u(y)-u(x)}{|x-y|}} dt 
\int_{t}^{\infty} \frac{dr}{(r^2+1)^{(d+1+2s)/2}}
\end{aligned} \]
gives
\begin{equation*}
\begin{aligned}
& \int_{\frac{-2M}{|x-y|}}^{\frac{u(x)-u(y)}{|x-y|} } dt 
\int_{-t}^{\infty} \frac{dr}{(r^2+1)^{(d+1+2s)/2}} +
\int_{\frac{-2M}{|x-y|}}^{\frac{u(y)-u(x)}{|x-y|} } dt 
\int_{-t}^{\infty} \frac{dr}{(r^2+1)^{(d+1+2s)/2}} \\
&  = 2\int_{\frac{-2M}{|x-y|}}^{0} dt 
\int_{-t}^{\infty} \frac{dr}{(r^2+1)^{(d+1+2s)/2}} +
\int_{0}^{\frac{u(y)-u(x)}{|x-y|}} dt 
\int_{-t}^{t} \frac{dr}{(r^2+1)^{(d+1+2s)/2}}.
\end{aligned}
\end{equation*}

Thus, collecting the estimates above and recalling definition \eqref{E:def_Fs}, we deduce
\begin{equation*}
\begin{aligned}
I_1 + II_1 &= C_1 + \iint_{\Omega \times \Omega} 
F_s\l(\frac{u(x)-u(y)}{|x-y|}\r) \frac{1}{|x-y|^{d-1+2s}} dxdy,
\end{aligned}
\end{equation*}
where 
\[
C_1 := \iint_{\Omega \times \Omega} dx dy \int_{\frac{-2M}{|x-y|}}^{0} dt 
\int_{-t}^{\infty} \frac{dr}{(r^2+1)^{(d+1+2s)/2}} 
\]
is a finite number that only depends on $M,d,s,\Omega$.
The finiteness of $C_1$ is due to the boundedness of $\Omega$ and the bound
\begin{equation*}
\begin{aligned}
\int_{0}^{\frac{2M}{|x-y|}} dt  &
\int_{t}^{\infty} \frac{dr}{(r^2+1)^{(d+1+2s)/2}} \\
 \leq & \int_{0}^{\infty} dt 
\int_{t}^{\infty} \frac{dr}{(r^2+1)^{(d+1+2s)/2}} 
= \int_{0}^{\infty} \frac{r \;dr}{(r^2+1)^{(d+1+2s)/2}} < \infty.
\end{aligned}
\end{equation*}

Applying the change of variables $(t,r) = (-\tilde{r}+u(y), \tilde{r}-\tilde{t})$, the term $II_2 = L_s(E \setminus \Omega, E^c \cap \Omega_M)$ from \eqref{eq:second_term} can be expressed as
\begin{equation*}
\begin{aligned}
II_2 &= \iint_{{\Omega}^c \times \Omega} dxdy \int_{-\infty}^{u(x)} d\tilde{t}
    \int_{u(y)}^{M} \frac{d\tilde{r}}{\l((\tilde{t}-\tilde{r})^2+|x-y|^2\r)^{(d+1+2s)/2}} \\
&= \iint_{\Omega \times{\Omega}^c} dxdy \int_{-\infty}^{u(y)} d\tilde{t}
    \int_{u(x)}^{M} \frac{d\tilde{r}}{\l((\tilde{t}-\tilde{r})^2+|x-y|^2\r)^{(d+1+2s)/2}} \\
&= \iint_{\Omega \times{\Omega}^c} dxdy \int_{-M+u(y)}^{u(y)-u(x)} dt
    \int_{-t}^{\infty} \frac{dr}{(r^2+|x-y|^2)^{(d+1+2s)/2}}.
\end{aligned}
\end{equation*}

We next combine $I_2$ and $II_2$ to obtain
\begin{equation*}
\begin{array}{l r}
I_2 + II_2 &= \displaystyle{ \iint_{\Omega \times {\Omega}^c} 
\frac{dxdy}{|x-y|^{d-1+2s}} \Bigg(
\int_{\frac{-M-u(y)}{|x-y|}}^{\frac{u(x)-u(y)}{|x-y|} } dt 
\int_{-t}^{\infty} \frac{dr}{(r^2+1)^{(d+1+2s)/2}} }\\
& + \displaystyle{ \int_{\frac{-M+u(y)}{|x-y|}}^{\frac{u(y)-u(x)}{|x-y|} } dt 
\int_{-t}^{\infty} \frac{dr}{(r^2+1)^{(d+1+2s)/2}} \Bigg)} \\
& = \displaystyle{ \iint_{\Omega \times {\Omega}^c}  \frac{dxdy}{|x-y|^{d-1+2s}} \Bigg(
\int_{\frac{-M-u(y)}{|x-y|}}^{\frac{u(x)-u(y)}{|x-y|} } dt 
\int_{-t}^{0} \frac{dr}{(r^2+1)^{(d+1+2s)/2}}} \\
& + \displaystyle{  \int_{\frac{-M+u(y)}{|x-y|}}^{\frac{u(y)-u(x)}{|x-y|} } dt
\int_{-t}^{0} \frac{dr}{(r^2+1)^{(d+1+2s)/2}} + \frac{2M}{|x-y|} K \Bigg),} 
\end{array}
\end{equation*}
where $K = \int_0^{\infty}(r^2+1)^{-(d+1+2s)/2} dr$.
Therefore, recalling once again \eqref{E:def_Fs}, we deduce
\begin{equation*}
\begin{array}{lr}
I_2 + II_2 & = \displaystyle{\iint_{\Omega \times {\Omega}^c} 
\frac{dxdy}{|x-y|^{d-1+2s}} \bigg(
2 F_s\l(\frac{u(x)-u(y)}{|x-y|}\r)} \\
& - \displaystyle{F_s\l(\frac{-M-g(y)}{|x-y|}\r)}- F_s\l(\frac{M-g(y)}{|x-y|} \r) \bigg) + C_2,
\end{array} 
\end{equation*}
with $C_2 = 2M K \iint_{\Omega \times {\Omega}^c} |x-y|^{-(d+2s)} dxdy < \infty$, because $\Omega$ is bounded Lipschitz. Additionally, note that because $g \in L^{\infty}({\Omega}^c)$, we have
\begin{equation*}
     \iint_{\Omega \times {\Omega}^c} 
\l( F_s\l(\frac{-M-g(y)}{|x-y|}\r) 
+ F_s\l(\frac{M-g(y)}{|x-y|} \r) \r)\frac{dxdy}{|x-y|^{d-1+2s}}  < \infty.
\end{equation*}

Since $P_s(E, \Omega_M) = I_1 + I_2 + II_1 + II_2$, collecting the estimates above
yields
\begin{equation*}
P_s(E, \Omega_M) 
= \iint_{Q_{\Omega}} F_s\l(\frac{u(x)-u(y)}{|x-y|}\r) \frac{1}{|x-y|^{d+2s-1}} \;dxdy + C(M,d,s,\Omega,g). 
\end{equation*}
This finishes the proof of \Cref{prop:perimeter-energy}, and shows that the function $u$, whose subgraph solves the nonlocal Plateau problem in $\Omega'$, minimizes the energy \eqref{E:NMS-Energy-Graph}.

\bibliographystyle{amsplain}
\bibliography{NMS}
\end{document}